% Typeset with LaTeX format

\documentclass{amsart}
\usepackage{amssymb,latexsym}
\usepackage[mathscr]{eucal}

\theoremstyle{plain}
\newtheorem{theorem}{Theorem}[section]
\newtheorem{corollary}[theorem]{Corollary}
\newtheorem{lemma}[theorem]{Lemma}
\newtheorem{proposition}[theorem]{Proposition}
\newtheorem{remark}[theorem]{Remark}
\newtheorem{definition}[theorem]{Definition}
\newtheorem{example}[theorem]{Example}

\def \j*{j\in \mathbb J}
\def \o*{operator-valued frame}
\def \f*{$\{A_j\}_{j\in \mathbb J}$}

\newcommand{\be}{\begin{equation}\label}
\newcommand{\ee}{\end{equation}}
\newcommand{\bq}{\begin{equation*}}
\newcommand{\eq}{\end{equation*}}
\newcommand{\bp}{\begin{proof}}
\newcommand{\ep}{\end{proof}}
\newcommand{\bL}{\begin{lemma}\label}
\newcommand{\eL}{\end{lemma}}
\newcommand{\bP}{\begin{proposition}\label}
\newcommand{\eP}{\end{proposition}}
\newcommand{\bC}{\begin{corollary}\label}
\newcommand{\eC}{\end{corollary}}
\newcommand{\bT}{\begin{theorem}\label}
\newcommand{\eT}{\end{theorem}}
\newcommand{\bR}{\begin{remark}\label}
\newcommand{\eR}{\end{remark}}
\newcommand{\bD}{\begin{definition}\label}
\newcommand{\eD}{\end{definition}}
\tolerance=1000

\begin{document}

\title{Operator Valued Frames}
\author{Victor Kaftal}
\address{University of Cincinnati\\ 
         Department of Mathematics\\
         Cincinnati, OH, 45221, 
         USA} 
\email{victor.kaftal@uc.edu}

\author{David H. Larson}
\address{Texas A\&M University\\
									Department of Mathematics\\
									College Station, TX 77843, 
								 USA}
\email{larson@math.tamu.edu}

\author{Shuang Zhang}\thanks{The research of the first and third named
author were supported in part by grants of the Charles Phelps Taft Research Center. The research of the second author was supported in part by grants of 
the National Science Foundation. The first and the second named authors participated in NSF supported Workshops in Linear Analysis and Probability, Texas A\&M University}
\address{University of Cincinnati\\ 
         Department of Mathematics\\
         Cincinnati, OH, 45221, 
         USA} 
\email{zhangs@math.uc.edu}

\keywords{frames, multiframes, group representations, homotopy}
\subjclass{Primary: 42C15, 47A13; Secondary: 42C40, 46C05, 46L05}
\date{July 21, 2007}

\begin{abstract}
We develop a natural generalization of vector-valued frame
theory, we term operator-valued frame theory, using operator-algebraic
methods. This extends work of the second author and D. Han which can be viewed 
as the multiplicity one case and extends to higher multiplicity their dilation approach.
We prove several results for operator-valued frames concerning duality, disjointeness, complementarity , and  composition of operator valued frames and the relationship between the two types of  similarity (left and right) of such frames.  A key technical tool is the parametrization of Parseval operator valued frames in terms of a class of partial isometries in the Hilbert space of the analysis operator.
   We apply these notions to an analysis of multiframe generators for the
action of a discrete group $G$ on a Hilbert space.  
One of the main results of the Han-Larson work was the parametrization of
the Parseval frame generators in terms of the unitary operators in the
von Neumann algebra generated by the group representation, and the resulting norm
path-connectedness of the set of frame generators due to the connectedness
of the group of unitary operators of an arbitrary von Neumann algebra.
 In this paper we generalize this multiplicity one result to operator-valued frames.
However, both the parameterization and the proof of norm
path-connectedness turn out to be necessarily more complicated, and this
is at least in part the rationale for this paper.  Our parameterization
involves a class of partial isometries of a different von Neumann algebra.
These partial isometries are not path-connected in the norm topology, but
only in the strong operator topology.  We prove that the set of operator
frame generators is norm pathwise-connected precisely when the von Neumann
algebra generated by the right  representation of the group has no minimal
projections.  As in the multiplicity one theory there are analogous
results for general (non-Parseval) frames.  
\end{abstract}
\maketitle

\section{Introduction} \label{S:intro}

The mathematical theory of frame sequences on Hilbert space has developed rather rapidly in the past decade. This is true of both the finite dimensional and infinite dimensional aspect of the theory. The motivation has come from applications to engineering as well as from the pure mathematics of the theory. 

The theory of finite frames has developed almost as a separate theory in itself, with applications to industry (cf. the recent work \cite {CBE06} of Balans, Casazza, and Edidin on signal reconstruction without noisy phase) as well as recently demonstrated connections to theoretical problems such as the Kadison-Singer Problem \cite {CFTW06}. 

Important examples of infinite frames are the Gabor (Weyl-Heisenberg)  frames of time-frequency analysis and the wavelet frames \cite {cO03}.  Some papers dealing with infinite frames which relate directly or indirectly to this article are \cite {HL00, GH03, DL98, AILP, DFKLOW, KL04, KLZ, dL04}.

Work on this article began in January 1999, when the first-named author visited the second-named author at Texas A\&M University following the special session on ``The functional and harmonic analysis of wavelets and frames" that took place at the annual AMS meeting at San Antonio. Our purpose was to develop operator theoretic methods for dealing with multiwavelets and multiframes, thus extending the approach of the AMS Memoir \cite {HL00}. We developed the theory of operator-valued frames to provide a framework for such problems and we will test this model by solving a problem concerning norm path-connectedness. It has been brought to our attention that a few other recent papers in the literature overlap to some extent with our approach,  notably works of Casazza, Kutyniok and Li \cite {CKL}  on "fusion frames", and also recent work of Bodmann \cite {bB07} on quantum 
computing and work of W. Sun \cite {wS06} on g-ftrames.  These do not deal however with the path connectedness that we address. 
The papers of Kadison on the Pythagorean theorem \cite {rK102, rK202}
are examples of works of pure mathematics that several authors have
realized are both directly and indirectly relevant to frame theory.
They contain theorems on the possible diagonals of positive
operators both in B(H) and in  von Neumann algebras.  This topic is
closely related to the topic of rank-one decompositions and more
general summation decompositions of positive operators, and resolutions of
the identity operator, as investigated in \cite {DFKLOW, KL04} for its relevance to
frame theory.

Also, several papers in the literature deal with frames in Hilbert $C^*$-modules, including one by the same authors of this paper \cite {FL99, FL03, KLZ}. The problems and framework considered in this paper are of a significantly different nature and there is no 
essential overlap.

We note that the key idea in \cite {HL00}, was the observation that frames "dilate" 
to Riesz bases. This was proven at the beginning of [ \cite {HL00} (see also \cite[p. 145] {dL06} ), and was then used to obtain results on Gabor frames, more generally frames 
generated by the action of unitary systems, and certain group 
representations.  The dilation result for the special case of Parseval 
frames can be simply deduced from Naimark's dilation theorem for positive 
operator valued measures, in fact from the special case of Naimark's 
Theorem specific to purely atomic positive operator valued measures.  W. 
Czaja gives a nice account of this in \cite {wC03}, along with some new
dilation results.  V. Paulsen gives a nice proof of Naimark's theorem in \cite {vP02}
using the theory of completely positive mappings.
Similarly, we use dilation theory in the present paper to work with
operator-valued frames.

Consider a \emph{multiframe generator} $\{\psi_1,\psi_2\}$ for a  unitary system  $\mathscr U$, that is two vectors  in a Hilbert space $H$ for which the collection $\{U\psi_m \mid U\in \mathscr U, m=1,2\}$ forms  a frame: 
\be {e:multiframe}
a\|x\|^2 \le \sum_{U\in \mathscr U}\biggl( |(x,U\psi_1)|^2 +|(x,U\psi_2)|^2 \biggr) \le b\|x\|^2.
\ee
for some positive constants $a$ and $b$ and all $x\in H.$
Set $H_o:=\mathbb C^2$, choose $\{e_1, e_2\}$ to be an orthonormal basis of $H_o$, define the rank-two operator $A$ given for  $z\in H$ by $Az :=   (z,  \psi_1)e_1 + (z,  \psi_2)e_2$   and then denote $A_U:=AU^*$. Then equation (\ref{e:multiframe}) holds precisely when
\[
aI\le \sum_{U\in \mathscr U}A_U^*A_U\le bI,
\]
where the series converges in the strong operator topology. In other words, in lieu of considering the two
vectors $\{\psi_1,\psi_2\}$, we can consider the rank-two operator $A$. 

The above is a simple example of an \emph{operator valued frame} generator and leads naturally to the more general Definition \ref {D:oper frame} below of an \o* consisting of operators with ranges in a given Hilbert space $H_o$ and the frame condition is expressed in terms of  boundedness above and below of a series  of positive operators converging in the strong operator topology.  So, the usual (vector) frames can be seen as \o*s of ``multiplicity one".

It is easy to recover from the operator $A$ defined above the vectors $\psi_1$ and $\psi_2$ that were
 used to define it, and, in general, to decompose (but not uniquely) an \o* in a (vector) multiframe 
(see comments after Remark \ref {R:compos}.)  However, we expect that this paper will make clear that ``assembling" 
a multiframe in an \o* is more than just a space-saving bookeeping device. 

Indeed, Operator Theory techniques permit to obtain directly for an \o* 
(and hence for the related  vector multiframe)  properties known for (vector) frames. 

More importantly, however, treating multiframes as \o*s permits to parametrize them in an explicit and transparent way and thus handle the sometimes major differences that occur when the multiplicity rises above one, and in particular,  when it is infinite. 

A case in point, and in a sense our best ``test" of the usefulness of the notion of 
\o*s, is the analysis of frame generators for a discrete group (see \cite {HL00} and Section \ref{S:groups}
 for  a review of the definitions). Han and the second named author proved in \cite [Theorem 6.17]{HL00} that  the collection of all the Parseval frame generators for a given unitary representation $\{G, \pi, H\}$
 of a countable group $G$ is (uniquely)  parametrized by the unitary operators of the von Neumann algebra $\pi(G)''$ generated by the  unitaries $\pi_g$ of the representation. Since the unitary group of any von Neumann  algebra is path-connected in the norm topology, the collection of all the Parseval frame generators is therefore also path-connected, i.e., it has a single homotopy class.

As soon as $\dim H_o > 1$, the above parametrization is no longer sufficient (see Remark \ref {R: other param}),  and it must be replaced by a new parametrization involving a class of partial isometries of a different  von Neumann algebra (see Theorem 7.1, Proposition \ref{P: right group sim}).

Furthermore, when $\dim H_o =\infty$, it is possible to show that the partial isometries involved in this 
parametrization are not path-connected in the norm topology (they are path-connected in the strong operator  topology, though). Nevertheless, we prove in (Theorem \ref{T:homot}) that the collection of operator frame generators is still norm connected, precisely when  the von Neumann algebra generated by the left (or right) regular representation of the group has no minimal projections. The key step is provided by Lemma \ref {L:homot}  where the  strong continuity of a certain path of partial isometries is parlayed into the norm continuity of the  corresponding path of operator frame generators.

One of the main themes of this article is the analysis of one-to-one parametrizations of \o*s in general, and  of operator frame generators for unitary systems and groups in particular. In the process we extend to \o*s  many of the properties of vector frames. More in detail:

In Section \ref {S:frames} we define \o*s, their analysis operators and their frame projections, and then prove that  the dilation approach of \cite{HL00} carries over to the higher multiplicity case, i.e., that Parseval \o*s are the compressions of ``orthonormal" \o*s, namely collections of partial isometries, all with the same initial projection and with  mutually orthogonal ranges spanning the space. 

In Section \ref{S:param} we obtain a one-to-one parametrization of all the \o*s on a certain Hilbert space $H$, with given multiplicity and index set, in terms of a class of operators in the analysis operator Hilbert space  (partial isometries if we consider only Parseval frames). 

In Section  \ref{S:simil} we study two kind of similarities of \o*s. The similarity obtained by multiplying an \o* from the right generalizes the one usual in the vector case and inherits its  main properties. For higher multiplicity, however, we have also a similarity from the left which has  different properties. We characterize the case when two \o*s are similar both from the right and from the left,  in terms of the parametrization mentioned above (Proposition \ref {P:left right}). We also discuss composition of frames,
 when the range of the operators in one frame matches the domain of the operators in a second frame and  we present this notion as the tool to decompose an \o* into a (vector) multiframe.

In Section \ref{S:dual} we define and parametrize the dual of \o*s and extend to higher multiplicity also the notions of 
disjoint, strongly disjoint, and strongly complementary frames that were introduced in \cite {HL00} for the vector case.

In Section \ref{S:groups} we start the analysis of operator frame generators for unitary systems. The notion of
 \emph{local commutant} introduced in \cite{DL98} has a natural analog in the higher multiplicity case (see \ref{P:comm}.) 
Unitary representations of discrete  groups have an operator frame generator with values in
$H_o$ precisely when they are unitarily equivalent to a subrepresentation of the left regular representation
with multiplicity $\dim H_o$, i.e., $\lambda \otimes I_o$ with $I_o$ the identity of $B(H_o)$ (Theorem \ref {T:repres}). 
This result was previously formulated in terms of (vector) multiframes in \cite [Theorem 3.11]{HL00}.

In Section \ref {S:param gen} we present parametrizations of operator frame generators for a discrete
 group represention (Theorem \ref {T:param gen}). As mentioned above, higher multiplicity brings substantial 
differences with the vector case, which are illustrated by Proposition \ref {P:group simil}.

As already mentioned, Section \ref{S:homot} studies the path-connectedness of the operator frame generators for
a unitary representation of a discrete group using von Neumann algebras techniques. 

Finally, let us notice explicitly that although in the applications, frames are mainly  indexed by finite or countable index sets and the vectors in a frame belong to finite or separable Hilbert spaces, and similarly,  discrete groups are finite or countable, we found that making these assumptions provides no simplification in our proofs (with one very minor exception).  Thus we decided to state and prove our results in the general case. The only thing  to keep in mind when the index set $\mathbb J$ is not finite or $\mathbb N$, is that the convergence of $\sum_{j\in \mathbb J}x_j$ means the convergence of the net of the finite partial sums  for all  finite subsets of $\mathbb J$.

\section{Operator valued frames} \label{S:frames}

\begin{definition}\label{D:oper frame}

Let $H$ and $H_o$ be Hilbert spaces. A collection  $\{A_j\}_{j\in \mathbb J}~$
of operators $A_j\in B(H,H_o)$ indexed by $\mathbb J$ is called an operator-valued frame on $H$
with range in $ H_o$ if the series
\begin{equation} \label{eq:D}
S_A:=\sum_{j\in \mathbb J}~A_j^*A_j
\end{equation}\label{eq:def}
converges in the strong operator topology to a positive bounded invertible operator $S_A$. 
The frame bounds $a$ and $b$ are the largest number $a >0$ and the smallest number $b>0$ for which $aI\le S_A\le bI$. If $a=b$, i.e., $S_A=aI$, then   the frame is called tight; if $S_A=I$,  the frame is called  Parseval.  $\sup \{ rank (A_j )\mid j\in \mathbb J\}$ is called the multiplicity of the \o*. When the reference to $H,~H_o,$ and $\mathbb J$ is understood, we denote by $\mathscr F$ the set of all the operator-valued frames on $H$, with ranges in $H_o$  and indexed by $\mathbb J$.

\end{definition}

If the \o* has multiplicity one,  the operators $A_j$ can be identified through the Riesz Representation Theorem with Hilbert space vectors and hence in this case an \o* is indeed a (vector) frame under the usual definition. Explicitly, 
if $A_j$ is the rank one operator given by $A_j z= (x,x_j)e_j$ for some unit vectors $e_j\in H_o$, some vectors $x_j \in H$ and all $z\in H$, then $S_A=\sum_{j\in \mathbb J}~A_j^*A_j j$, hence $S_A$ is bounded and invertible
 if and only if $aI \le S_A \le bI$ for some $a,\, b >0$, namely,
\[
a\|x\|^2 \le (S_Ax,x) = \sum_{j\in \mathbb J}~|(x,x_j)|^2  \le a\|x\|^2
\]
for all $x\in H$. This is precisely the condition that guarantees that  $\{x_j\}_{j\in \mathbb J}$ is a (vector) frame.

Notice that if $\{A_j\}_{j\in \mathbb J}~\in \mathscr F$ and if $\{e_m\}_{m\in \mathbb M}$ is
an orthonormal basis of $H_o$, then it is easy to see that $\{A_j^*e_m\}_{(j,m)\in
\mathbb{J}\times\mathbb{M}}$ is a (vector) frame on $H$, i.e.,
operator-valued frames can be decomposed into (vector) multiframes.
We will revisit this decomposition when discussing more generally frame compositions.

The advantage of treating a collection of vectors forming a multiframe as an operator-valued frame is that we can more easily apply to it the formalism of operator theory. This already evidenced by the next example.

\begin{example}\label{E:dil} Let $K$ be an infinite dimensional
Hilbert space and let  $\{V_n\}_{n\in \mathbb N}$
be a collection partial isometries with mutually orthogonal  range projections $V_nV_n^*$ summing to the identity and all with the same initial projection $V_n^*V_n=E_o$. Let $P\in B(K)$ be a nonzero projection and  let $A_n:= V_n^*P\in B(H,H_o)$ where we set  $H:=PK$, $H_o :=E_oK$. Then
\[
\sum_{n=1}^{\infty}~A_n^*A_n=P(\sum_{n=1}^{\infty}~V_nV_n^* )\left. P \right|_{PK}  =
\left. P \right|_{PK} = I,
\]
i.e., the sequence  $\{A_n\}$ is a Parseval frame with range in $H_o$.
\end{example}

By introducing the \emph{analysis operator} (also called frame transform, e.g., \cite{HL00}), we will see that this example is `generic' (see Proposition \ref{P:dil} below.)

\subsection*{Analysis operator}
Given a Hilbert space $H_o$ and an index set $\mathbb J$, define the
partial isometries
\begin{equation}\label{eq:def L}
  L_j\medspace:\medspace H_o \owns h \mapsto e_j\otimes h \in \ell(\mathbb J)\otimes H_o
\end{equation}
where $\{e_j\}$ is the standard basis of $\ell^2(\mathbb J)$. Then
\begin{equation}\label{eq:L}
L_j^*L_i=
\begin{cases}
I_o  \quad &\text{if } i=j\\
0 \quad &\text{if } i\ne j
\end{cases}
\end{equation}
and
\begin{equation}\label{eq:orthog L}
\sum_{j\in \mathbb J}~L_jL_j^*=I\otimes I_o
\end{equation}
where $I$ denotes the identity operator on $\ell^2(\mathbb J)$ and
$I_o$ denotes the identity operator on $H_o$.

\begin{proposition}\label{P:transform}
For every $\{A_j\}_{j\in \mathbb J}\in \mathscr F$, 
\item[(i)] The series
$ \sum_{{j\in \mathbb J}}L_jA_j$ converges in the strong operator topology to  an operator
$\theta_A \in B(H,\ell(\mathbb J)\otimes H_o)$
\item[(ii)]
$S_A=\theta_A^*\theta_A $
\item[(iii)] $\{A_j\}_{j\in \mathbb J}$ is Parseval if and only if $\theta_A$ is an isometry
\end{proposition}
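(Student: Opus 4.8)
The plan is to work directly from the definitions, treating the series $\sum_{j\in\mathbb J}L_jA_j$ via its finite partial sums and controlling everything by the already-established convergence of $S_A=\sum_j A_j^*A_j$ in the strong operator topology. First I would fix a finite subset $F\subseteq\mathbb J$ and consider $\theta_A^{(F)}:=\sum_{j\in F}L_jA_j$. Using the orthogonality relations \eqref{eq:L}, namely $L_j^*L_i=\delta_{ij}I_o$, one computes for $x\in H$ that $\|\theta_A^{(F)}x\|^2=\sum_{i,j\in F}(L_j^*L_iA_ix,A_jx)_{H_o}=\sum_{j\in F}\|A_jx\|^2=\bigl(\sum_{j\in F}A_j^*A_j\,x,x\bigr)$. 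Since the net of finite partial sums of $\sum_jA_j^*A_j$ converges strongly (hence weakly) to $S_A$, this shows $\|\theta_A^{(F)}x\|^2\uparrow(S_Ax,x)\le b\|x\|^2$; in particular the net $\{\theta_A^{(F)}x\}_F$ is bounded. To upgrade boundedness to convergence, for finite $F\subseteq G$ the same orthogonality computation gives $\|\theta_A^{(G)}x-\theta_A^{(F)}x\|^2=\sum_{j\in G\setminus F}\|A_jx\|^2$, which is a tail of the convergent net $\sum_j(A_j^*A_jx,x)$ and hence is eventually small. Thus $\{\theta_A^{(F)}x\}_F$ is Cauchy, so it converges; define $\theta_Ax:=\lim_F\theta_A^{(F)}x$. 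The estimate $\|\theta_Ax\|^2=(S_Ax,x)\le b\|x\|^2$ shows $\theta_A$ is bounded with $\theta_A\in B(H,\ell^2(\mathbb J)\otimes H_o)$, proving (i).

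For (ii), I would take the inner product form of the identity just derived: for all $x\in H$, $(\theta_A^*\theta_Ax,x)=\|\theta_Ax\|^2=(S_Ax,x)$. Since both $\theta_A^*\theta_A$ and $S_A$ are bounded self-adjoint operators and their quadratic forms agree on all of $H$, polarization gives $\theta_A^*\theta_A=S_A$. Part (iii) is then immediate: $\{A_j\}$ is Parseval means $S_A=I$, which by (ii) is equivalent to $\theta_A^*\theta_A=I$, i.e.\ $\theta_A$ is an isometry.

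The only genuinely delicate point is the passage from a pointwise-defined limit to a bounded operator and the justification that the convergence is in the strong operator topology of $B(H,\ell^2(\mathbb J)\otimes H_o)$ rather than merely pointwise on a dense set; here that causes no trouble because the uniform bound $\|\theta_A^{(F)}\|^2\le b$ holds for every finite $F$ (again from $\sum_{j\in F}A_j^*A_j\le S_A\le bI$), so the limit operator is automatically bounded by $\sqrt b$ and the convergence $\theta_A^{(F)}\to\theta_A$ holds strongly on all of $H$ by definition. I would also remark that, as usual when $\mathbb J$ is not countable, ``series'' throughout means the net of finite partial sums, exactly as flagged in the introduction, so no separability hypothesis is needed anywhere in the argument.
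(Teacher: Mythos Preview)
Your proof is correct and follows essentially the same approach as the paper: the core computation $\|\theta_A x\|^2=\sum_{j}\|A_jx\|^2=(S_Ax,x)$ via the orthogonality of the ranges of the $L_j$ is exactly what the paper does, and the paper then dismisses the rest as ``routine arguments,'' which you have simply spelled out carefully (the Cauchy-net argument for strong convergence, the uniform bound $\|\theta_A^{(F)}\|\le\sqrt{b}$, and polarization for (ii)).
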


\begin{proof}
 For every $x\in H$
\[
\|\theta_Ax\|^2 =\sum_{j\in \mathbb J} \|L_jA_jx\|^2
=\sum_{j\in \mathbb J} \|A_jx\|^2
=(S_Ax,x),
\]
where the first identity holds because the operators $L_j$ have
mutually orthogonal ranges, the second one holds because
they are isometries, and the third one holds by the definition (\ref{eq:D}) of $S_A$.
These identities and routine arguments prove (i)-(iii).

\end{proof}

Explicitly, 
\be{eq:transf}
\theta_A =  \sum_{{j\in \mathbb J}}L_jA_j
\ee
and for every $x\in H$, $\theta_A(x)= \sum_{\j*}\,(e_j\otimes A_jx)$. 
As a consequence of Proposition \ref {P:transform} (ii),
\begin{equation}\label{eq:isom} \theta_A S_A^{-1/2} \quad\text{is an isometry}
\end{equation}
 and hence
\begin{equation}\label{eq:P}
P_A: =\theta_A S_A^{-1}\theta_A^*
\end{equation}
is the range projection of $\theta_AS_A^{-1/2}$ and hence of $\theta_A$. Moreover,
 $\{A_j\}_{j\in \mathbb J}$ is Parseval if and only if $\theta_A\theta_A^*$ is a projection.
 
Given $\{A_j\}_{j\in \mathbb J}\in\mathscr F$, the operator 
$\theta_A\in  B(H, \ell(\mathbb J)\otimes H_o)$ is called the \emph{analysis operator}
 and the projection $P_A\in B( \ell(\mathbb J)\otimes H_o)$ is called the
\emph{frame projection} of $\{A_j\}_{j\in \mathbb J}$.

The analysis operator fully `encodes' the information carried by the
operator-valued frame, namely the frame can be  \emph{reconstructed} from its analysis operator via the identity
\begin{equation}\label{eq:reconstr}
A_j=L_j^*\theta_A \quad \text{for all $j\in \mathbb J$}.
\end{equation}
Indeed, 
\[
L_j^*\theta_A = L_j^*\sum_{i\in \mathbb J}~L_iA_i  =
\sum_{i\in \mathbb J}~L_j^*L_iA_i = A_j
\]
by (\ref{eq:L}). In particular, two operator-valued frames $\{A_j\}_{j\in
\mathbb J}$ and $\{B_j\}_{j\in \mathbb J}\in\mathscr F$ are identical if and only if $\theta_A=\theta_B$.
Also, 
\be{e:transf*}
\theta_A^*=\sum_{i\in \mathbb J}~A_j^*L_j^*
\ee
where the convergence is in the strong topology, because by (\ref{eq:reconstr}), $A_j^*= \theta_A^*L_j$ and $L_jL_j^*$ are 
mutually orthogonal projections that sum to the identity $I\otimes I_o$ of $\ell(\mathbb J)\otimes H_o$. The same argument shows that
for any two \o*s $\{A_j\}_{j\in \mathbb J}$ and $\{B_j\}_{j\in \mathbb J}\,$,
\be{e:2transf}
\theta_B^*\theta_A = \sum_{i\in \mathbb J}~B_j^*A_j \quad \text {in the strong operator topology}
\ee

Recall now that Parseval (vector) frames were shown in \cite[Proposition
1.1]{HL00} to be compressions of orthonormal
bases. The higher multiplicity analog of that result is given by the following proposition.

\begin{proposition}\label{P:dil}
For every $\{A_j\}_{j\in \mathbb J}\in \mathscr F$,
there is a Hilbert space $K$ containing $H$ and $H_o$, a collection of
partial isometries $\{V_j\}_{j\in \mathbb J}$
all with the same domain $H_o$ and with mutually orthogonal ranges
spanning $K$,
and a positive invertible operator $T\in B(H)$ such that $A_j=V_j^*T$
for all $j\in \mathbb J$.
In particular, if $\{A_j\}_{j\in \mathbb J}$ is a Parseval frame,
then $T$ can be chosen to be the projection on $H$.
\end{proposition}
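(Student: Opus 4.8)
The plan is to build the dilation space $K$ directly from the analysis operator $\theta_A$ of Proposition~\ref{P:transform}. Recall that $\theta_A \in B(H, \ell(\mathbb J)\otimes H_o)$ satisfies $S_A = \theta_A^*\theta_A$, and by~(\ref{eq:isom}) the operator $\theta_A S_A^{-1/2}$ is an isometry with range projection $P_A$ given by~(\ref{eq:P}). So I would first set $K := \ell(\mathbb J)\otimes H_o$, embed $H_o$ as $L_{j_0} H_o$ for some fixed index (or, cleaner, simply regard $H_o$ abstractly and embed it via $L_0$ after enlarging $\mathbb J$ if necessary---but in fact the given $\mathbb J$ suffices as long as it is nonempty), and embed $H$ into $K$ as the range of the isometry $\theta_A S_A^{-1/2}$. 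Under this identification of $H$ with $P_A K$, the inclusion map $H\hookrightarrow K$ is precisely $\theta_A S_A^{-1/2}$.

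Next I would produce the partial isometries. The natural candidates are the $L_j$ from~(\ref{eq:def L}): by~(\ref{eq:L}) and~(\ref{eq:orthog L}) they all have initial space $H_o$, have mutually orthogonal ranges, and $\sum_j L_j L_j^* = I\otimes I_o = I_K$, so their ranges span $K$. Now set $T := S_A^{1/2} \in B(H)$, which is positive and invertible. The key computation is to check $A_j = V_j^* T$ where $V_j := L_j$ viewed as an operator into $K$, with the understanding that $V_j^*$ is composed with the inclusion $H\hookrightarrow K$; that is, I must verify
\[
V_j^* \big(\theta_A S_A^{-1/2}\big) S_A^{1/2} = L_j^* \theta_A = A_j,
\]
which is exactly the reconstruction identity~(\ref{eq:reconstr}). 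This gives $A_j = V_j^* T$ with $T = S_A^{1/2}$ positive and invertible, proving the general statement.

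For the Parseval case: if $S_A = I$ then $T = S_A^{1/2} = I$, and the inclusion $H\hookrightarrow K$ is the isometry $\theta_A$ itself, whose range projection is $P_A = \theta_A\theta_A^*$; so under the identification $H = P_A K$, the operator $T\in B(H)$ is literally the identity on $H$, equivalently $A_j = V_j^*\big|_{H} = V_j^* P_A$, which is the compression of $V_j^*$ to $H$. One must phrase "$T$ can be chosen to be the projection on $H$" carefully: as an operator in $B(K)$ extending $T\in B(H)$ by zero, $T$ becomes the projection $P_A$ onto $H$, and then $A_j = V_j^* P_A$ recovers the "compression of an orthonormal operator-valued frame" picture of Example~\ref{E:dil}.

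The only genuine subtlety---and the step I expect to require the most care rather than cleverness---is the bookkeeping around the embeddings: $H$ and $H_o$ are a priori unrelated abstract spaces, $A_j\in B(H,H_o)$, whereas $V_j^*\in B(K, V_j^*K)=B(K,H_o)$ after we identify the common initial space of the $V_j$ with $H_o$. One must check that the two identifications of $H_o$ (the abstract one in the statement of $A_j\in B(H,H_o)$, and the one as the initial space $L_j^*L_j H_o$ inside $K$) are consistent, i.e.\ that $L_j^*$ really does land in the copy of $H_o$ we want and that $L_i^* L_j = 0$ for $i\ne j$ makes the ranges genuinely orthogonal; both are immediate from~(\ref{eq:L}) and~(\ref{eq:orthog L}). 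Once the identifications are pinned down, everything else is a direct appeal to Proposition~\ref{P:transform} and the reconstruction formula~(\ref{eq:reconstr}), with no estimates required.
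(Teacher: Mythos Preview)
Your proof is correct and is essentially the same as the paper's: both take $K=\ell(\mathbb J)\otimes H_o$, $V_j=L_j$, embed $H$ via the isometry $\theta_A S_A^{-1/2}$, and invoke the reconstruction formula $A_j=L_j^*\theta_A$. The only cosmetic difference is that you keep $T=S_A^{1/2}$ on the abstract $H$, whereas the paper transports it to $P_AK$ and writes $T=\theta_A S_A^{-1/2}\theta_A^*|_{P_AK}$; these are the same operator under your identification $U=\theta_A S_A^{-1/2}$, since $U S_A^{1/2} U^*=\theta_A S_A^{-1/2}\theta_A^*$.
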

\begin{proof}
Let $K=\ell(\mathbb J)\otimes H_o$. Identify $H_o$ with $\mathbb C
\otimes H_o \subset K$ and identify
$H$ with its image $P_AK$ under the isomorphism $\theta_A S_A^{-1/2}$
(see (\ref{eq:isom})). Then for all $j\in \mathbb J$,
we identify $A_j$ with
\[\left. A_j(  \theta_A S_A^{-1/2})^*\right |_{P_A\negmedspace K} = \left.L_J^*(
\theta_AS_A^{-1/2}\theta_A^*)\right |_{P_A\negmedspace K}.
\]
Clearly, $T:=\theta_AS_A^{-1/2}\left.\theta_A^*\right|_{P_A\negmedspace K}$ is positive and invertible and it is the
identity $\left.P_A\right|_{P_A\negthickspace K}$ on $H$ if and only if
$\{A_j\}_{j\in \mathbb J}$ is a Parseval frame.
\end{proof}

In analogy with the vector case (see \cite [Chaper 1]{HL00}), we introduce the following terminology.

\bD{D:Riesz}
An \o* $\{A_j\}_{j\in \mathbb J}$ for which $P_A = I\otimes I_o$ is called a Riesz frame and an \o* that is 
both Parseval and Riesz, i.e., such that $\theta_A^*\theta_A = I$ and 
$\theta_A\theta_A^*= I\otimes I_o$, is called an orthonormal frame. 

\eD

\bR{R:Riesz}
If we identify \o*s with their images in the analysis space $\ell(\mathbb J)\otimes H_o$, (i.e., up to right unitary equivalence 
of the frames, in the notations of Section \ref{S:simil} below), then
\item[(i)] General frames are the frames of the form $\{L_j^*T \}_{j\in \mathbb J}$ for some positive operator $T=PTP$
invertible in $B(P\ell(\mathbb J)\otimes H_o)$ and some projection $P \in B(\ell(\mathbb J)\otimes H_o)$. The operator $T$ and the projection
$P$ are uniquely determined (up to Hilbert space isomorphism, i.e., right unitary equivalence of the \o*s). 
\item[(ii)] Parseval frames are the frames of the form 
$\{L_j^*P\}_{j\in \mathbb J}$ for some projection $P$ in $B(\ell(\mathbb J)\otimes H_o)$. 
\item[(iii)] Riesz frames are the frames of the form 
$\{L_j^*T\}_{j\in \mathbb J}$ for some invertible operator $T\in B(\ell(\mathbb J)\otimes H_o)$. 
\item[(iv)] $\{L_j^*\}_{j\in \mathbb J}$ is the unique orthonormal frame.

In particular, in the notations of Section \ref{S:simil}, Riesz \o*s are the frames that are right-similar to an
 orthonormal frame (cfr.\cite [Proposition 1.5] {HL00} ).
 \eR
 
 \bR{R:multiplicity}
Notice that in the definition of an \o* $\{A_j\}_{j\in \mathbb J}$  there is no request for $ H_o$ to be ``minimal", i.e., for $\dim H_o$ to coincide with the multiplicity of the frame, i.e., with $\sup \{ rank~A_j \mid j\in \mathbb J\}$. In fact we can consider the operators $A_j$ as having range in a ``larger" Hilbert space, e.g., in $H$ itself. Doing so will produce a new analysis operator into a ``larger"  space, however, both analysis operators will carry the same information about the original frame, which can be recovered equally well from either of them.
 
\eR
\section{Parametrization of operator-valued frames}\label{S:param}
In this section we show that all the operator-valued frames with the same multiplicity and same index set, operating
 on the same Hilbert space (up to isomorphism) can be ``obtained'' from a single operator-valued frame. Consider
 first Parseval frames. Following the dilation viewpoint  (cfr. Proposition \ref {P:dil} above), we can immerse all
 these frames in the analysis space $\ell(\mathbb J)\otimes H_o$ by identifying them with the compression of 
$\{L_j^*\}_{j\in \mathbb J}$ to their frame projection. Since all the frame projections are equivalent
 (each range is isomorphic to the original  Hilbert space of the frame), the partial isometries implementing  these equivalences will provide the link between the frames. To make this idea precise, and to handle at the same  time frames that are not Parseval, we introduce the following notation.

Given $\{A_j\}_{j\in \mathbb J} ~\in \mathscr F$, define
\begin{equation}\label{eq:M}
\mathscr {M}_A:=\{M\in B(\ell(\mathbb J)\otimes H_o) \mid M=MP_A,
 \left.M^*M\right |_{P_A\negmedspace \ell(\mathbb J)\otimes H_o}\text{ is invertible}\}.
\end{equation}
Equivalently,  $\mathscr {M}_A:=\{M\in B(P_A\ell(\mathbb J)\otimes H_o, \ell(\mathbb J)\otimes H_o) \mid M\text{ is left invertible}\}.$
If $M\in \mathscr {M}_A$, denote by $(M^*M)^{-1}\in B(P_A\ell(\mathbb J)\otimes H_o)$ the inverse of $\left.M^*M\right
|_{P_A\negmedspace \ell(\mathbb J)\otimes H_o}$.

\begin{theorem}\label{T:param} Let $\{A_j\}_{j\in \mathbb J} ~\in \mathscr F$. For all 
$\{B_j\}_{j\in \mathbb J} ~\in \mathscr F$ define 
\begin{equation}\label{eq:Phi}
\Phi_A(\{B_j\}_{j\in \mathbb J}~) :=\theta_BS_A^{-1}\theta_A^*.
\end{equation}
Then $\Phi_A: \mathscr F \mapsto \mathscr {M}_A$ is one-to-one and onto and
 $ \Phi_A^{-1}(M) = \{L_j^*M\theta_A\}_{j\in \mathbb J~}$ for all $M\in \mathscr M_A $. 
If  $\{B_j\}_{j\in \mathbb J} ~\in \mathscr F$ and $M:= \Phi_A(\{B_j\}_{j\in \mathbb J})$,  then 
$\theta_B= M\theta_A$ and  
\[
V_M:=M(M^*M)^{-1/2}
\]
is a partial isometry that implements the equivalence $P_B\sim P_A$, i.e., $P_B= V_MV_M^*$ and $P_A= V_M^*V_M$.
\end{theorem}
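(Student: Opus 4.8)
The plan is to show $\Phi_A$ is well-defined (lands in $\mathscr M_A$), construct an explicit inverse, and verify the two maps compose to the identity in both directions, deducing the bijection and the partial isometry claims as corollaries.

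\textbf{Step 1: $\Phi_A$ maps into $\mathscr M_A$.} Given $\{B_j\}\in\mathscr F$, set $M=\theta_BS_A^{-1}\theta_A^*$. Since $\theta_A^* = \theta_A^*P_A$ (because $P_A$ is the range projection of $\theta_A$, hence of $\theta_AS_A^{-1}$), we get $M = MP_A$. For left-invertibility of $M$ on $P_A(\ell(\mathbb J)\otimes H_o)$, I compute $M^*M = \theta_AS_A^{-1}\theta_B^*\theta_BS_A^{-1}\theta_A^* = \theta_AS_A^{-1}S_B S_A^{-1}\theta_A^*$ using Proposition \ref{P:transform}(ii). Restricted to $P_A(\ell(\mathbb J)\otimes H_o)$, sandwich estimates with the frame bounds of $A$ and $B$ show this is bounded below: indeed $\theta_AS_A^{-1/2}$ is an isometry of $H$ onto $P_A(\ell(\mathbb J)\otimes H_o)$ by \eqref{eq:isom}, and conjugating by it reduces $M^*M$ to $S_A^{-1/2}S_BS_A^{-1/2}$ on $H$, which is positive invertible. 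So $M\in\mathscr M_A$.

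\textbf{Step 2: the candidate inverse.} For $M\in\mathscr M_A$ define $\Psi(M) := \{L_j^*M\theta_A\}_{j\in\mathbb J}$. I must check $\Psi(M)\in\mathscr F$. Writing $B_j := L_j^*M\theta_A$, reconstruction \eqref{eq:reconstr} combined with \eqref{eq:orthog L} gives $\sum_j L_jB_j = \sum_j L_jL_j^*M\theta_A = M\theta_A$ (the sum being $(I\otimes I_o)M\theta_A$), so the series $\sum_j L_jB_j$ converges strongly and $\theta_B = M\theta_A$. Then $S_B = \theta_B^*\theta_B = \theta_A^*M^*M\theta_A$, and by the same isometry-conjugation argument as in Step 1 this is positive invertible on $H$ (using that $M^*M$ is invertible on $P_A(\ell(\mathbb J)\otimes H_o)$ and $\theta_A$ maps $H$ into that subspace with closed range). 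Hence $\{B_j\}\in\mathscr F$.

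\textbf{Step 3: the compositions.} For $\Phi_A\circ\Psi = \mathrm{id}$: with $M\in\mathscr M_A$ and $\{B_j\}=\Psi(M)$, we have $\theta_B=M\theta_A$ from Step 2, so $\Phi_A(\{B_j\}) = \theta_BS_A^{-1}\theta_A^* = M\theta_AS_A^{-1}\theta_A^* = MP_A = M$. For $\Psi\circ\Phi_A = \mathrm{id}$: with $\{B_j\}\in\mathscr F$ and $M=\Phi_A(\{B_j\})$, I need $\theta_B = M\theta_A$, i.e. $\theta_B = \theta_BS_A^{-1}\theta_A^*\theta_A = \theta_BS_A^{-1}S_A = \theta_B$, which holds; then $L_j^*M\theta_A = L_j^*\theta_B = B_j$ by \eqref{eq:reconstr}. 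This gives that $\Phi_A$ is a bijection with $\Phi_A^{-1}=\Psi$, and also records $\theta_B=M\theta_A$ whenever $M=\Phi_A(\{B_j\})$.

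\textbf{Step 4: the partial isometry.} With $M\in\mathscr M_A$ and $V_M := M(M^*M)^{-1/2}$ (functional calculus of the invertible positive operator $M^*M$ on $P_A(\ell(\mathbb J)\otimes H_o)$), a direct computation gives $V_M^*V_M = (M^*M)^{-1/2}M^*M(M^*M)^{-1/2} = P_A$, so $V_M$ is a partial isometry with initial projection $P_A$. For the final projection: $V_MV_M^* = M(M^*M)^{-1}M^*$, which is a self-adjoint idempotent (check $(\,\cdot\,)^2 = \,\cdot\,$ directly), hence a projection $Q$ with range $= \overline{\mathrm{ran}}\,M$. It remains to identify $Q$ with $P_B$ where $\{B_j\}=\Phi_A^{-1}(M)$: since $\theta_B = M\theta_A$ and $\theta_A$ has range $P_A(\ell(\mathbb J)\otimes H_o) = \mathrm{ran}(M^*M)$ (on which $M$ is bounded below), we get $\mathrm{ran}\,\theta_B = \mathrm{ran}\,M$, so $P_B$, the range projection of $\theta_B$, equals $Q = V_MV_M^*$. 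Thus $P_B = V_MV_M^*$, $P_A = V_M^*V_M$, establishing $P_B\sim P_A$.

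The main obstacle I anticipate is bookkeeping the domains and restrictions cleanly: $M^*M$ is only invertible \emph{after restriction} to $P_A(\ell(\mathbb J)\otimes H_o)$, and $(M^*M)^{-1}$, $(M^*M)^{-1/2}$ must be interpreted as operators on that subspace (extended by $0$ on its complement when composing with $M$, which kills nothing since $M=MP_A$). Once one fixes the convention that all these inverses live on $P_A(\ell(\mathbb J)\otimes H_o)$ and uses \eqref{eq:isom} to transport invertibility statements back and forth between $H$ and $P_A(\ell(\mathbb J)\otimes H_o)$, every step is a short algebraic identity; no hard analysis is involved beyond strong-convergence of the reconstruction series, which is already in hand from Proposition \ref{P:transform} and \eqref{eq:reconstr}.
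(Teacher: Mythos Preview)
Your proof is correct and follows essentially the same approach as the paper: both arguments establish that $M\in\mathscr M_A$ via the frame bounds, build the inverse map $M\mapsto\{L_j^*M\theta_A\}$, verify $\theta_B=M\theta_A$, and identify $P_B$ with the range projection of $M$ via the polar part $V_M$. The only cosmetic difference is that you organize the bijection as ``mutual inverse maps'' while the paper checks injectivity and surjectivity separately; the underlying computations are the same.
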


\begin{proof}
Let  $\{B_j\}_{j\in \mathbb J}\in  \mathscr F $  and let 
$M:=\Phi_A(\{B_j\}_{j\in \mathbb J})$. Then $MP_A=M$
because $P_A$ is the range projection of $\theta_A$. Moreover,
\[
M^*M = \theta_AS_A^{-1}\theta_B^*\theta_BS_A^{-1}\theta_A^*
\ge \theta_AS_A^{-1}aIS_A^{-1}\theta_A^* \ge
\frac{a}{b}\theta_AS_A^{-1}\theta_A^* = \frac{a}{b}P_A,
\]
where $a$ is a lower bound for $S_B$ and $b$ is an upper bound for
$S_A$. Thus $M^*M$ is invertible in $B(P_A\ell(\mathbb J)\otimes H_o)$ and hence $M\in \mathscr {M}_A$, i.e,
 $\Phi_A$ maps into $\mathscr {M}_A$.
Assume now that  $\Phi_A(\{B_j\}_{j\in \mathbb J}) = \Phi_A(\{C_j\}_{j\in \mathbb J})$ for two frames in $\mathscr F$. 
Since $\theta_B=\theta_BS_A^{-1}\theta_A^*\theta_A = M\theta_A  $, it follows that
$\theta_B=\theta_C$, and by (\ref{eq:reconstr}), the two frames coincide, i.e., $\Phi_A$ is injective.
We prove now that $\Phi_A$ is onto. For any $M\in \mathscr {M}_A$, define $\{B_j:=L_j^*M\theta_A\}_{j\in \mathbb J}$.
Then 
\[
\sum_{j\in \mathbb J}B_j^*B_j = \sum_{j\in \mathbb J}\theta_A^*M^*L_jL_j^*M\theta_A = \theta_A^*M^*M\theta_A 
\le \|M\|^2\theta_A^*\theta_A = \|M\|^2S_A.
\]
Similarly,
\[
\sum_{j\in \mathbb J}B_j^*B_j  \ge \|(M^*M)^{-1}\|^{-1}S_A,
\]
which proves that $\{B_j\}_{j\in \mathbb J}$ is an operator-valued frame. Moreover, 
\[
\theta_B= \sum_{j\in \mathbb J}L_jB_j= \sum_{j\in \mathbb J}L_jL_j^*M\theta_A = M\theta_A.
\] 
We have just seen that $ \theta_B = \Phi_A(\{B_j\}_{j\in \mathbb J})\theta_A$. Thus
$M\theta_A = \Phi_A(\{B_j\}_{j\in \mathbb J})\theta_A$ and hence 
\[
M= MP_A= M\theta_AS_A^{-1}\theta_A^* = \Phi_A(\{B_j\}_{j\in \mathbb J})\theta_AS_A^{-1}\theta_A^*
=\Phi(\{B_j\}_{j\in \mathbb J}).
\]
This proves that the map $ \Phi_A$ is onto and that 
$ \Phi_A^{-1}(M) = \{L_j^*M\theta_A\}_{j\in \mathbb J~}$ for all $M$ in $\mathscr M_A $.
It remains only to compute $P_B$, which by definition, is the range projection of $\theta_B = M\theta_A$. Since  
$M= MP_A$ and $\theta_A$ is one-to-one (recall that $\theta_AS_A^{-1/2}$ is an isometry), $P_B$ is the range projection
 of $M$.  Now 
\[
V_M^*V_M= (M^*M)^{-1/2}M^*M(M^*M)^{-1/2} =P_A
\]
 (recall that $(M^*M)^{-1}$ denotes the inverse of $M^*M$ in
$B(P_A\ell(\mathbb J)\otimes H_o)$), hence $V_M$ is a partial isometry. Since $(M^*M)^{-1/2}$ is invertible, the range of 
$V_M$ coincides with the range of $M$, and hence $P_B= V_MV_M^*$.

\end{proof}

An easy consequence of Theorem \ref {T:param} and its proof is the following:

\bC {C:product} If $\{A_j\}_{j\in \mathbb J}, \{B_j\}_{j\in \mathbb J}, \text{ and }\{C_j\}_{j\in \mathbb J}$ are 
\o*s, then $\Phi_A(\{C_j\}_{j\in \mathbb J})= \Phi_B(\{C_j\}_{j\in \mathbb J})\Phi_A(\{B_j\}_{j\in \mathbb J})$.
\eC

This corollary shows that  $\Phi_A(\{B_j\}_{j\in \mathbb J})$ behave like a partial isometry with initial projection $P_A$
 and range projection $P_A$. In fact, if both frames are Parseval, $\Phi_A(\{B_j\}_{j\in \mathbb J})$ is precisely a partial isometry with these initial and range projections.

Since every operator-valued frame  is right-similar to a Parseval frame, (see Definition \ref{D:simil} below), we can focus on Parseval frames. For ease of  reference  we present in the following corollary the main result of Theorem \ref{T:param} formulated directly for Parseval frames. 

\begin{corollary}\label{C:param Pars}
Given a  Parseval operator-valued frame $\{A_j\}_{j\in \mathbb J}\in\mathscr F$, then
\[
\{\{L_j^*V\theta_A\}_{j\in \mathbb J} \mid  V\in B(\ell(\mathbb J)\otimes H_o), V^*V=P_A\}.
\]
  is the collection of all Parseval operator-valued frames in $\mathscr F.$ The correspondence is one-to-one: 
if $V\in B(\ell(\mathbb J)\otimes H_o), V^*V=P_A$, and $\{B_j:= L_j^*V\theta_A\}_{j\in \mathbb J}\in\mathscr F$, then 
$V = \theta_B\theta_A^*$.
\end{corollary}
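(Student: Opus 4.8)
The plan is to derive Corollary \ref{C:param Pars} as a direct specialization of Theorem \ref{T:param} to the case where the reference frame $\{A_j\}_{j\in \mathbb J}$ is Parseval. First I would recall that when $\{A_j\}_{j\in \mathbb J}$ is Parseval, Proposition \ref{P:transform}(iii) gives $S_A = \theta_A^*\theta_A = I$, so the formula $\Phi_A(\{B_j\}_{j\in \mathbb J}) = \theta_B S_A^{-1}\theta_A^*$ from \eqref{eq:Phi} simplifies to $\Phi_A(\{B_j\}_{j\in \mathbb J}) = \theta_B\theta_A^*$, and the inverse map $\Phi_A^{-1}(M) = \{L_j^*M\theta_A\}_{j\in \mathbb J}$ is unchanged. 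So the content of the corollary is really just the identification of the target set $\mathscr M_A$ in this special case.

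Next I would show that when $\{A_j\}_{j\in \mathbb J}$ is Parseval and $\{B_j\}_{j\in \mathbb J}$ is moreover also Parseval, the operator $M := \Phi_A(\{B_j\}_{j\in \mathbb J})$ satisfies $M^*M = P_A$. Indeed, from the proof of Theorem \ref{T:param} we have $M^*M = \theta_A S_A^{-1}\theta_B^*\theta_B S_A^{-1}\theta_A^*$, which with $S_A = I$ and $\theta_B^*\theta_B = S_B = I$ becomes $M^*M = \theta_A\theta_A^* = P_A$. Conversely, if $V \in B(\ell(\mathbb J)\otimes H_o)$ satisfies $V^*V = P_A$ and $\{B_j := L_j^*V\theta_A\}_{j\in \mathbb J}$, then by the onto-computation in the proof of Theorem \ref{T:param}, $\sum_j B_j^*B_j = \theta_A^*V^*V\theta_A = \theta_A^*P_A\theta_A = \theta_A^*\theta_A = I$ (using $P_A\theta_A = \theta_A$), so $\{B_j\}_{j\in \mathbb J}$ is Parseval. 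This shows that, under the bijection $\Phi_A$, the Parseval frames in $\mathscr F$ correspond exactly to the partial isometries $V$ with $V^*V = P_A$, which is the asserted parametrization.

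Finally, for the one-to-one claim and the formula $V = \theta_B\theta_A^*$, I would note that injectivity is immediate from the injectivity of $\Phi_A$ already established in Theorem \ref{T:param} (two distinct $V$'s with $V = VP_A$ give distinct $\theta_B = V\theta_A$ since $\theta_A$ is one-to-one, hence distinct frames by \eqref{eq:reconstr}), and that $V = \theta_B\theta_A^*$ is just the statement $V = \Phi_A(\{B_j\}_{j\in \mathbb J})$ written out with $S_A = I$: from $\theta_B = M\theta_A = V\theta_A$ one recovers $V = VP_A = V\theta_A\theta_A^* = \theta_B\theta_A^*$.

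Since every step is a straightforward substitution of $S_A = I$ (and, where relevant, $S_B = I$) into quantities already computed in the proof of Theorem \ref{T:param}, I do not anticipate a genuine obstacle; the only point requiring a little care is keeping track of $P_A$ as the range projection of $\theta_A$ and using $P_A\theta_A = \theta_A$ consistently, together with the fact that $V^*V = P_A$ forces $V = VP_A$ (so $V$ indeed lies in $\mathscr M_A$ as a partial isometry, not merely a left-invertible operator). Thus the corollary follows with essentially no new argument beyond bookkeeping.
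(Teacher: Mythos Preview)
Your proposal is correct and follows essentially the same route as the paper: both arguments specialize Theorem \ref{T:param} to the Parseval case and check that under the bijection $\Phi_A$ the Parseval frames correspond precisely to partial isometries with initial projection $P_A$. The only cosmetic difference is that the paper verifies the Parseval condition via $\theta_B\theta_B^* = MM^*$ being a projection (using the remark after \eqref{eq:P}), whereas you verify it via $S_B = \theta_A^*M^*M\theta_A = I$; these are equivalent and your computation of $M^*M = P_A$ makes the identification with the stated condition $V^*V = P_A$ slightly more explicit.
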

\begin{proof}
We  need only to show that when $\{A_j\}_{j\in \mathbb J}\in\mathscr F$ is Parseval and $M\in \mathscr M_A$,
then the operator-valued frame $\Phi_A^{-1}(M):=\{B_j:L_j^*M\theta_A\}_{j\in \mathbb J}$) is Parseval if
 and  only if $M$ is a partial isometry. This is clear since 
$\theta_B\theta_B^* = M \theta_A\theta_A^*M^*=MP_AM^*=MM^*$ and as remarked after equation (\ref{eq:P}), 
$\{B_j\}_{j\in \mathbb J}$ is Parseval if and only if $\theta_B\theta_B^*$ is a projection. Finally, 
$V = M= \Phi_A(\{B_j\}_{j\in \mathbb J}~) =\theta_B\theta_A^*$ since $S_A^{-1}=I$. 
\end{proof}

\section{Similarity and composition of frames}\label{S:simil}
For operator valued frames with ranges in $H_o$ where $\dim H_o > 1$
there are two natural distinct notions of similarity which are called
`right' and `left'.

\begin{definition}\label{D:simil} Let  $\{A_j\}_{j\in \mathbb
J}$,$\{B_j\}_{j\in \mathbb J}\in \mathscr F$. We say that:
\item[(i)]  $\{B_j\}_{j\in \mathbb J}$ is right-similar (resp., right
unitarily equivalent)
to $\{A_j\}_{j\in \mathbb J}$
if there is an invertible operator (resp., unitary operator) $T\in
B(H)$ such that $B_j=A_jT$ for all $j\in \mathbb J$.
\item[(ii)]  $\{B_j\}_{j\in \mathbb J}$ is left-similar (resp., left unitarily
equivalent)
  to $\{A_j\}_{j\in \mathbb J}$
  if there is an invertible operator (resp., unitary operator) $R\in
B(H_o)$ such that $B_j=RA_j$ for all $j\in \mathbb J$.
\end{definition}

When $\dim H_o=1$, the left similarity is trivial (a multiplication
by a nonzero scalar) and the right similarity is just the usual  similarity of the vector frames (corresponding to the
operators, i.e., functionals, by the Riesz Representation Theorem).

We leave to the reader the following simple results about right similarity.

\begin{lemma}\label{L:rightsim} Let $\{A_j\}_{j\in \mathbb J}\in \mathscr F$ have frame bounds $a$ and $b$,
  i.e., $aI\le S_A \le bI$, let $T\in B(H)$ be an invertible operator,
and let $\{B_j:=A_jT\}_{j\in \mathbb J}$. Then
\item[(i)] $\{B_j\}_{j\in \mathbb J}\in \mathscr F$ and
$\frac{a}{\|T^{-1}\|^2}I\le S_B \le b\|T\|^2I.$ In particular,
  if $T$ is unitary, then $\{B_j\}_{j\in \mathbb J}$ has the same
bounds as $\{A_j\}_{j\in \mathbb J}$. Assuming that $\{A_j\}_{j\in \mathbb J}$ is Parseval,
then  $\{B_j\}_{j\in \mathbb J}$ is Parseval if and only if $T$ is unitary.
\item[(ii)] $\theta_B = \theta_AT$ and $S_B=T^*S_AT$
\item[(iii)]$\Phi_A(\{B_j\}_{j\in \mathbb J})= \theta_ATS_A^{-1}\theta_A^*.$
\end{lemma}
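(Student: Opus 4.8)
\textbf{Proof proposal for Lemma \ref{L:rightsim}.}

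The plan is to verify each of the three claims by straightforward computation using the definitions from Sections \ref{S:frames} and \ref{S:param}, since right-similarity is a very mild transformation. For part (i), I would start from the identity $S_B = \sum_{j\in \mathbb J} B_j^*B_j = \sum_{j\in \mathbb J} T^*A_j^*A_jT = T^*S_AT$, where the rearrangement is legitimate because $T^*(\cdot)T$ is strongly continuous on bounded sets (or simply because $T$ is bounded). From $aI \le S_A \le bI$ and the invertibility of $T$ one gets $S_B = T^*S_AT \le b\,T^*T \le b\|T\|^2 I$ for the upper bound, and $S_B = T^*S_AT \ge a\,T^*T \ge \frac{a}{\|T^{-1}\|^2} I$ for the lower bound, using $T^*T \ge \|T^{-1}\|^{-2} I$, which follows from $\|x\| = \|T^{-1}Tx\| \le \|T^{-1}\|\,\|Tx\|$. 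In particular these bounds are positive and finite, so $\{B_j\}_{j\in\mathbb J}\in\mathscr F$. If $T$ is unitary, $T^*T = TT^* = I$ gives the same bounds; and if $\{A_j\}$ is Parseval ($S_A = I$) then $S_B = T^*T$, which equals $I$ precisely when $T$ is an isometry, hence (being invertible) precisely when $T$ is unitary.

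For part (ii), I would compute $\theta_B = \sum_{j\in\mathbb J} L_jB_j = \sum_{j\in\mathbb J} L_jA_jT = \left(\sum_{j\in\mathbb J} L_jA_j\right)T = \theta_AT$, using Proposition \ref{P:transform}(i) and the fact that the partial sums of $\sum_j L_jA_j$ converge strongly to $\theta_A$, so composing on the right with the bounded operator $T$ preserves the convergence. Then $S_B = \theta_B^*\theta_B = T^*\theta_A^*\theta_AT = T^*S_AT$ by Proposition \ref{P:transform}(ii), consistent with the direct computation above. Part (iii) is then immediate from the definition \eqref{eq:Phi}: $\Phi_A(\{B_j\}_{j\in\mathbb J}) = \theta_BS_A^{-1}\theta_A^* = \theta_ATS_A^{-1}\theta_A^*$, substituting the formula for $\theta_B$ from (ii).

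None of this presents a real obstacle; the only point requiring a moment of care is the interchange of the (possibly infinite) sum with multiplication by $T$ (resp.\ $T^*(\cdot)T$), which is justified by boundedness of $T$ and the definition of strong-operator convergence of nets of finite partial sums recalled at the end of Section \ref{S:intro}. One could alternatively phrase part (i)'s bounds via (ii): from $S_B = T^*S_AT$ and $\|S_B^{-1}\| = \|T^{-1}S_A^{-1}(T^*)^{-1}\| \le \|T^{-1}\|^2\|S_A^{-1}\| \le \|T^{-1}\|^2/a$ one recovers the lower frame bound $a/\|T^{-1}\|^2$, and $\|S_B\| \le \|T\|^2\|S_A\| \le b\|T\|^2$ gives the upper one.
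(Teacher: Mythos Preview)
Your proof is correct and complete; the paper itself leaves this lemma to the reader (``We leave to the reader the following simple results about right similarity''), so there is no proof in the paper to compare against. Your direct computations via $S_B = T^*S_AT$ and $\theta_B = \theta_AT$ are exactly the routine verification the authors had in mind.
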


Now we characterize right-similar frames.

\begin{proposition}\label{P:simil} Let $\{A_j\}_{j\in \mathbb J},\{B_j\}_{j\in \mathbb J}\in \mathscr F$ 
and let $M:=\Phi_A(\{B_j\}_{j\in \mathbb J})$. Then the following conditions are equivalent:
\item[(i)]  $B_j=A_jT$ for all $j\in \mathbb J$ for some invertible operator $T\in B(H)$, i.e., $(\{A_j\}_{j\in \mathbb J}$
 and $(\{B_j\}_{j\in \mathbb J}$ are right-similar.
\item[(ii)] $\theta_B=\theta_AT$ for some invertible $T\in B(H)$.
\item[(iii)] $M=P_AMP_A$ is invertible in
$B(P_A\ell(\mathbb J)\otimes H_o)$.
\item[(iv)] $P_B=P_A.$

If the above conditions are satisfied, the invertible operator $T$ in (i) and (ii) is uniquely 
determined and $T= S_A^{-1}\theta_A^*\theta_B.$

In the case that $\{A_j\}_{j\in \mathbb J}$ is Parseval then $\{B_j\}_{j\in \mathbb J}$
 is Parseval if and only if the operator $T$ in (i), or equivalently in (ii), is unitary.  
\end{proposition}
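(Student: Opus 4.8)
The plan is to prove the chain of equivalences (i) $\Leftrightarrow$ (ii) $\Leftrightarrow$ (iii) $\Leftrightarrow$ (iv), then address uniqueness of $T$, and finally the Parseval addendum. The implication (i) $\Rightarrow$ (ii) is immediate from Lemma \ref{L:rightsim}(ii), which gives $\theta_B = \theta_A T$. For (ii) $\Rightarrow$ (i), given $\theta_B = \theta_A T$ I would apply the reconstruction formula (\ref{eq:reconstr}): $B_j = L_j^*\theta_B = L_j^*\theta_A T = A_j T$; invertibility of $T$ is the hypothesis. So far nothing is subtle.

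For (ii) $\Leftrightarrow$ (iii): recall from Theorem \ref{T:param} that $\theta_B = M\theta_A$ always holds, where $M = \Phi_A(\{B_j\})$, and that $M = MP_A$ with $M^*M$ invertible on $P_A\ell(\mathbb J)\otimes H_o$. If $\theta_B = \theta_A T$ with $T$ invertible on $B(H)$, then using $\theta_A S_A^{-1}\theta_A^* = P_A$ one computes $M = \Phi_A(\{B_j\}) = \theta_B S_A^{-1}\theta_A^* = \theta_A T S_A^{-1}\theta_A^*$; I would then check that $P_A M P_A = M$ (clear, since the range of $\theta_A$ is $P_A$ and $M = MP_A$ already, plus $M = \theta_A(\cdots)$ shows $M = P_A M$), and that $M$ has two-sided inverse $\theta_A T^{-1}S_A^{-1}\theta_A^*$ on $P_A\ell(\mathbb J)\otimes H_o$ — a direct computation using $\theta_A^*\theta_A = S_A$ and $\theta_A S_A^{-1}\theta_A^* = P_A$. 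Conversely, if $M = P_AMP_A$ is invertible in $B(P_A\ell(\mathbb J)\otimes H_o)$, set $T := S_A^{-1}\theta_A^*\theta_B = S_A^{-1}\theta_A^* M\theta_A = S_A^{-1}\theta_A^*\theta_A \cdot (S_A^{-1}\theta_A^* M \theta_A)$... more cleanly, $T := S_A^{-1}\theta_A^* M \theta_A \in B(H)$; one verifies $\theta_A T = \theta_A S_A^{-1}\theta_A^* M \theta_A = P_A M \theta_A = M\theta_A = \theta_B$, and invertibility of $T$ follows because $\theta_A$ restricted to its range is (after the $S_A^{-1/2}$ twist) an isomorphism and $M$ is invertible there — the inverse is $S_A^{-1}\theta_A^* M^{-1}\theta_A$, checked by composition. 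For (iii) $\Leftrightarrow$ (iv): $P_B$ is the range projection of $\theta_B = M\theta_A$, and since $\theta_A$ is injective with range projection $P_A$ and $M = MP_A$, $P_B$ equals the range projection of $M$; $M = P_A M P_A$ invertible on $P_A\ell(\mathbb J)\otimes H_o$ is then equivalent to its range projection being exactly $P_A$, i.e. $P_B = P_A$. (One direction needs that an operator $M$ with $M = MP_A$ whose range projection is $P_A$ and which is bounded below on $P_A\ell(\mathbb J)\otimes H_o$ — it is, being in $\mathscr M_A$ — is invertible there.)

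For uniqueness of $T$: if $\theta_A T = \theta_A T'$ then $T = T'$ since $\theta_A$ is injective ($\theta_A S_A^{-1/2}$ is an isometry, (\ref{eq:isom})); and the formula $T = S_A^{-1}\theta_A^*\theta_B$ comes from applying $S_A^{-1}\theta_A^*$ to $\theta_B = \theta_A T$ and using $\theta_A^*\theta_A = S_A$. Finally, for the Parseval addendum: assume $\{A_j\}$ Parseval, so $S_A = I$ and $\theta_A$ is an isometry. If $\{B_j\}$ is right-similar via $T$, then $S_B = T^*S_A T = T^*T$ by Lemma \ref{L:rightsim}(ii), so $S_B = I$ iff $T^*T = I$; but $T$ is invertible, so $T^*T = I$ forces $T$ unitary, and conversely $T$ unitary gives $S_B = I$. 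This is exactly Lemma \ref{L:rightsim}(i) specialized, so it can simply be quoted.

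The main obstacle I anticipate is the bookkeeping around the compression $P_A\ell(\mathbb J)\otimes H_o$: one must consistently interpret $M^*M$, $M$, and its inverse as operators on that subspace versus on the full space, and be careful that "invertible in $B(P_A\ell(\mathbb J)\otimes H_o)$" is what (iii) asserts, not invertibility in $B(\ell(\mathbb J)\otimes H_o)$. The algebraic identities $\theta_A^*\theta_A = S_A$ and $\theta_A S_A^{-1}\theta_A^* = P_A$ do all the real work; the risk is purely in not conflating the two ambient algebras. Everything else is routine manipulation already licensed by Proposition \ref{P:transform}, Theorem \ref{T:param}, and Lemma \ref{L:rightsim}.
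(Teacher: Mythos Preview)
Your proposal is correct and follows essentially the same approach as the paper: the same algebraic identities $\theta_A^*\theta_A=S_A$ and $\theta_AS_A^{-1}\theta_A^*=P_A$ drive everything, and the explicit form $M=\theta_ATS_A^{-1}\theta_A^*$ with inverse $\theta_AT^{-1}S_A^{-1}\theta_A^*$ is exactly what the paper uses. The only organizational difference is that the paper closes the equivalences via the cycle (ii) $\Rightarrow$ (iii) $\Rightarrow$ (iv) $\Rightarrow$ (ii), proving (iv) $\Rightarrow$ (ii) directly by exhibiting $S_B^{-1}\theta_B^*\theta_A$ as the inverse of $T=S_A^{-1}\theta_A^*\theta_B$, whereas you pass through (iv) $\Rightarrow$ (iii) $\Rightarrow$ (ii) using $M^{-1}$; both routes are equally short and use the same ingredients.
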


\begin{proof}
\item[(i)] $\Longleftrightarrow$ (ii) One implication is given by
Lemma \ref{L:rightsim} and the other is immediate.

\item[(ii)] $\Longrightarrow$ (iii) We have
$\theta_B=\theta_AT=(\theta_ATS_A^{-1}\theta_A^*)\theta_A$. Let $N:=\theta_ATS_A^{-1}\theta_A^*$, then
\begin{align*}
N^*N&=(\theta_ATS_A^{-1}\theta_A^*)^*(\theta_ATS_A^{-1}\theta_A^*)
= \theta_AS_A^{-1}T^*S_ATS_A^{-1}\theta_A^*\\
&\ge a\|T\|^2\theta_AS_A^{-2}\theta_A^* 
\ge \frac{a\|T\|^2}{b}\theta_AS_A^{-1}\theta_A^* 
= \frac{a\|T\|^2}{b}P_A,
\end{align*}
and $NP_A=(\theta_ATS_A^{-1}\theta_A^*)P_A=N$.
Thus $N\in \mathscr M_A$ and
by the injectivity of the map $\Phi_A$ in Theorem \ref{T:param},
$M=\theta_ATS_A^{-1}\theta_A^*$. We now see that also
$P_AM= M$, i.e., $M=P_AMP_A$. Furthermore,
\[
M(\theta_AT^{-1}S_A^{-1}\theta_A^*)
= \theta_ATT^{-1}S_A^{-1}\theta_A^*
=\theta_AS_A^{-1}\theta_A^*
=P_A,
\]
and similarly, $(\theta_AT^{-1}S_A^{-1}\theta_A^*)M=P_A.$
Thus $M$ is invertible in $B(P_A\ell(\mathbb J)\otimes H_o)$.

\item[(iii)] $\Longrightarrow$ (iv)  By Theorem \ref{T:param}, $P_B=[M]$, 
the range projection of $M$. By hypothesis, $M$ is invertible in $B(P_A\ell(\mathbb J)\otimes H_o)$, hence
$[M] = P_A$.

\item[(iv)] $\Longrightarrow$ (ii) Since
$\theta_B=P_B\theta_B=P_A\theta_B=\theta_A(S_A^{-1}\theta_A^*\theta_B)$,
 it suffices to show that $S_A^{-1}\theta_A^*\theta_B$ has an inverse, namely
$S_B^{-1}\theta_B^*\theta_A$. Indeed,
 \[
S_A^{-1}\theta_A^*\theta_BS_B^{-1}\theta_B^*\theta_A =
S_A^{-1}\theta_A^*P_B\theta_A = S_A^{-1}\theta_A^*P_A\theta_A = S_A^{-1}\theta_A^*\theta_A=I
 \]
 Similarly,
  \[
S_B^{-1}\theta_B^*\theta_AS_A^{-1}\theta_A^*\theta_B =
S_B^{-1}\theta_B^*P_A\theta_B = S_B^{-1}\theta_B^*P_B\theta_B=S_B^{-1}\theta_B^*\theta_B=I.
 \]
The uniqueness is then easily established.

\end{proof}

\begin{remark} \label{R:uniq}
\item [(i)]  For every \o* $\{A_j\}_{j\in \mathbb J}\,$, $\{B_j:=A_jS_A^{-1/2}\}_{j\in \mathbb J}~$ is 
a Parseval \o*. Thus every \o* is right-similar to a Parseval operator-valued frame. Every \o*
right unitarily equivalent to $\{B_j\}_{j\in \mathbb J}$ is also Parseval and right-similar to
$\{A_j\}_{j\in \mathbb J}$.
\item [(ii)] The equivalence of (i) and (iv) is the higher multiplicity analog of 
\cite [Proposition 2.6 and Corollary 2.8]{HL00}.
\end{remark}

Now we consider left similarity.

\begin{lemma}\label{L:left simil}
Let $\{A_j\}_{j\in \mathbb J}$ be an \o*  having frame bounds $a$ and $b$ and let  
$R\in B(H_o)$ be an invertible operator. Then
\item[(i)]
$\{B_j:=RA_j\}_{j\in \mathbb J}$ is  an operator-valued frame and
$\frac{a}{\|R^{-1}\|^2}I\le S_B \le b\|R\|^2I.$ In particular,
  if $R$ is unitary, then $\{B_j\}_{j\in \mathbb J}$ has the same frame bounds as $\{A_j\}_{j\in \mathbb J}$. 
\item[(ii)] $\theta_B = (I\otimes R)\theta_A$,
$S_B=\theta_A^*(I\otimes R^*R) \theta_A$, $P_B= [(I\otimes R)\theta_A]$ is the range  projection of 
$(I\otimes R)\theta_A$, and $~\Phi_A(\{B_j\}_{j\in \mathbb J})= (I\otimes R)P_A$.
\item[(iii)] Assume that  $\{A_j\}_{j\in \mathbb J}$ is Parseval. Then $\{B_j\}_{j\in \mathbb J}$
 is Parseval if and only if $P_A(I\otimes R^*R)P_A= P_A$, if and only if $P_B = (I\otimes R)P_A(I\otimes R^*)$.
In particular, this holds if $R$ is an isometry.
\item[(iv)] $P_B$ is unitarily equivalent to $P_A$.

\end{lemma}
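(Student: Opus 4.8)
The plan is to reduce the claim to a statement about the range projections $P_A$ and $P_B$ on the analysis space $\ell(\mathbb J)\otimes H_o$. From part (ii) we already know $\theta_B = (I\otimes R)\theta_A$, so $P_B$ is the range projection of $(I\otimes R)\theta_A$. Since $I\otimes R$ is invertible in $B(\ell(\mathbb J)\otimes H_o)$, the operator $(I\otimes R)\theta_A$ has closed range equal to $(I\otimes R)(\operatorname{ran}\theta_A)$, so $P_B$ is the projection onto $(I\otimes R)P_A(\ell(\mathbb J)\otimes H_o)$. The goal is to produce a unitary $W\in B(\ell(\mathbb J)\otimes H_o)$ with $WP_AW^* = P_B$; then $P_A$ and $P_B$ are (Murray--von Neumann and in fact spatially) equivalent.

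First I would set $G := I\otimes R$ and let $G = W|G|$ be its polar decomposition, where $|G| = (G^*G)^{1/2} = (I\otimes R^*R)^{1/2}$ is positive and invertible, and $W$ is unitary (since $G$ is invertible). Then $\operatorname{ran}(G\theta_A) = \operatorname{ran}(W|G|\theta_A)$. The point now is that $|G|$ is invertible, hence $|G|\theta_A$ has the same range projection as $\theta_A$, namely $P_A$ — more precisely, $\operatorname{ran}(|G|\theta_A) = |G|(\operatorname{ran}\theta_A)$, which need not equal $\operatorname{ran}\theta_A$. So one must be a little careful: applying the invertible operator $|G|$ does not fix $P_A$. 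The clean route is instead to observe $P_B = [G\theta_A] = [GP_A]$ (range projection of $GP_A$, since $\operatorname{ran}\theta_A = \operatorname{ran}P_A = P_A(\ell(\mathbb J)\otimes H_o)$). Let $GP_A = U|GP_A|$ be the polar decomposition of $GP_A$; then $U$ is a partial isometry with initial projection $[(GP_A)^*] = [P_AG^*] = [P_AG^*GP_A]$ — but $G^*G = I\otimes R^*R$ is invertible, so $P_AG^*GP_A$ is invertible in $B(P_A(\ell(\mathbb J)\otimes H_o))$, hence its range projection is $P_A$ — and final projection $[GP_A] = P_B$. Thus $U$ is a partial isometry with $U^*U = P_A$ and $UU^* = P_B$, which already gives $P_A \sim P_B$.

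To upgrade to \emph{unitary} equivalence in $B(\ell(\mathbb J)\otimes H_o)$ one needs the complements to match: $I - P_A \sim I - P_B$. Here I would use that $G = I\otimes R$ is a (bounded, invertible) automorphism of the whole space, so it also carries $\operatorname{ran}(I-P_A)$-type data around; concretely, the same polar-decomposition argument applied to $G(I-P_A)$ shows $[G(I-P_A)] \sim I - P_A$ via a partial isometry, and a dimension/count argument in $B(\ell(\mathbb J)\otimes H_o)$ — or simply citing that two projections $P, Q$ with $P\sim Q$ and $I-P \sim I - Q$ are unitarily equivalent — finishes it. Alternatively, and more simply, since $G$ is invertible we may take its polar decomposition $G = W|G|$ with $W$ \emph{unitary}; then $P_B = [G\theta_A] = [W|G|\theta_A]= W[|G|\theta_A]W^*$... but as noted $[|G|\theta_A]\ne P_A$ in general, so this shortcut fails and one genuinely needs the comparison of complements.

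The main obstacle I anticipate is exactly this last point: left multiplication by the invertible operator $I\otimes R$ moves the range projection to an equivalent but not unitarily-conjugate projection a priori, so the naive "conjugate by the unitary part of the polar decomposition" argument does not directly yield $WP_AW^* = P_B$. The resolution is to compare both $P_A \sim P_B$ and $(I-P_A) \sim (I-P_B)$ — the latter by running the identical polar-decomposition argument on $G(I-P_A)$, using again that $G^*G$ is invertible — and then invoke the standard fact that two projections in $B(\mathcal H)$ with equivalent ranges and equivalent orthocomplements are unitarily equivalent. Everything else (computing $\theta_B = (I\otimes R)\theta_A$, closedness of ranges under invertible maps, invertibility of $P_A(I\otimes R^*R)P_A$ on $P_A(\ell(\mathbb J)\otimes H_o)$) is routine given parts (i)--(iii) and Theorem \ref{T:param}.
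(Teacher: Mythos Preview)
Your overall strategy is the same as the paper's: establish $P_A\sim P_B$ and $P_A^\bot\sim P_B^\bot$ separately, then conclude unitary equivalence. The first half (polar decomposition of $GP_A$ with $G=I\otimes R$) is fine and agrees with what Theorem~\ref{T:param} already provides. The gap is in the second half.

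You claim that ``running the identical polar-decomposition argument on $G(I-P_A)$'' yields $(I-P_A)\sim(I-P_B)$. It does not. The polar decomposition of $G(I-P_A)$ gives a partial isometry with initial projection $I-P_A$ and final projection $[G(I-P_A)]$, the range projection of $G(I-P_A)$. But $[G(I-P_A)]\ne I-P_B$ in general: $G(I-P_A)H$ is a (non-orthogonal) complement of $GP_AH=P_BH$, not its \emph{orthogonal} complement. Your vague ``dimension/count argument'' would have to bridge $[G(I-P_A)]\sim I-P_B$, and you never do this; in infinite dimensions it needs an actual argument (e.g.\ that any two closed complements of a fixed closed subspace are isomorphic via the quotient).

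The paper avoids this detour by computing $P_B^\bot$ directly. Since $P_B^\bot H=\ker\theta_B^*=\ker\bigl(\theta_A^*(I\otimes R^*)\bigr)=(I\otimes R^*)^{-1}(\ker\theta_A^*)=(I\otimes R^{-*})(P_A^\bot H)$, one gets $P_B^\bot=[(I\otimes R^{-*})P_A^\bot]$. Then the standard fact $[X]\sim[X^*]$ gives
\[
P_B^\bot=[(I\otimes R^{-*})P_A^\bot]\sim[P_A^\bot(I\otimes R^{-1})]=P_A^\bot,
\]
the last equality because $I\otimes R^{-1}$ is surjective. Replace $G(I-P_A)$ by $G^{-*}(I-P_A)$ in your argument and it goes through.
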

\begin{proof}
\item[(i)] Obvious.
\item[(ii)] For every $R\in B(H_o)$, $h\in H_o$, and $j\in \mathbb J$, from the
definition of $L_j$  we have that
  $L_jRh= e_j\otimes Rh = (I\otimes R)L_jh$, i.e., $L_jR = (I\otimes R)L_j$. Then
\[
\theta_B= \sum_{j\in \mathbb J}L_jRA_j = \sum_{j\in \mathbb J}(I\otimes R)L_jA_j =
(I\otimes R)\theta_A.
\]
  Consequently,
$S_B=\theta_B^*\theta_B = \theta_A^*(I\otimes R^*R)\theta_A.$
Clearly, $M:= (I\otimes R)P_A\in \mathscr M_A$
and since $\theta_B = (I\otimes R)\theta_A=M\theta_A$, by the
injectivity of $\Phi_A$ in Theorem \ref{T:param},
  it follows that $\Phi_A(\{B_j\}_{j\in \mathbb J})=M$. This can also be verified
directly from 
\[
\Phi_A(\{B_j\}_{j\in \mathbb J})=\theta_BS_A^{-1}\theta_A^*
=(I\otimes R)\theta_AS_A^{-1}\theta_A^*= (I\otimes R)P_A.
\]
\item[(iii)] Immediate from (ii).
\item[(iv)] Denote by  $N(X)$ the null projection of the operator $X$. Then
\[
P_B^{\bot} =  [(I\otimes R)(\theta_A]^{\bot} 
=[(I\otimes R)(\theta_A\theta_A^*)^{1/2}]^{\bot}
=N\left ((\theta_A\theta_A^*)^{1/2}(I\otimes R^*)\right ).
\]
Now $x \in N\left ((\theta_A\theta_A^*)^{1/2}(I\otimes R^*)\right )$ if and only if 
$x \in (I\otimes R^*)^{-1}N\left ((\theta_A\theta_A^*)^{1/2}\right )$, thus 
\[
P_B^{\bot}
= (I\otimes R^{-1})^*P_A^{\bot}
 = [(I\otimes R^{-1})^*P_A^{\bot}]
\sim [P_A^{\bot}(I\otimes R^{-1})]= P_A^{\bot},
\]
where we use the well known fact $[X]\sim [X^*]$.
Since  $P_B \sim P_A$ (e.g., see Theorem \ref{T:param}), it follows that $P_B$ and $P_A$ are unitarily equivalent.

\end{proof}

In Proposition \ref {P:simil} we have seen that the invertible operator implementing the right similarity of two
operator-valued frames is uniquely determined and that it must be a unitary operator when both frames are Parseval.
The following example shows that neither of these conclusions hold in the case of left similarities. 

\begin{example}\label{E:nonuniq}. Let $H:=\ell^2(\mathbb J)$,
$H_o:=\mathbb C^2$, $Q_1,Q_2$ be two orthogonal (rank-one) projections in $B(H_o)$, let 
$P:=I\otimes Q_1$, and let $\{A_j:=L_j^*\left. P\right|_{P  \ell(\mathbb J)\otimes H_o}\}_{j\in \mathbb J}$.
By Example \ref{E:dil}, $\{A_j\}_{j\in \mathbb J}$ is a Parseval operator-valued frame. Moreover,  for every $\lambda \ne 0$,  $R:=Q_1+\lambda Q_2$ is an invertible operator and 
 \begin{align*}
RA_j &=RL_j^*\left. P\right|_{P \ell(\mathbb J)\otimes H_o} =  L_j^*(I\otimes R)\left. P\right|_{P \ell(\mathbb J)\otimes H_o}\\
&= L_j^*(I\otimes (Q_1+\lambda Q_2))\left. (I\otimes Q_1)\right|_{P \ell(\mathbb J)\otimes H_o}
= L_j^*\left. P\right|_{P \ell(\mathbb J)\otimes H_o} = A_j.
\end{align*}
\end{example}

\bigskip

If $A_j\}_{j\in \mathbb J}$ and $\{B_j=RA_j\}_{j\in \mathbb J}$ are two left-similar Parseval \o*s, 
 $P_B = (I\otimes R)P_A(I\otimes R^*)$ by Lemma \ref{L:left simil} and furthermore there is a unitary
 operator $U$ for which $P_B=UP_AU^*$. The following example shows that it can be impossible to choose 
$U\in I\otimes B(H_o)$. 

\begin{example}\label{E:unit}. Let $H:=\ell^2(\mathbb J)$, $H_o:=\mathbb C^2$, $P_1, P_2$ 
be two orthogonal projections in $B(H)$,  
$Q_1:=\begin{pmatrix}
1 & 0\\
0 & 0
\end{pmatrix},
Q_2:=\begin{pmatrix}
0 & 0\\
0 & 1
\end{pmatrix}, 
Q_3:=\begin{pmatrix}
1/ 2 & 1/ 2\\
1/ 2 & 1/ 2
\end{pmatrix}$, and
$R:=\begin{pmatrix}
1 & 1/\sqrt 2\\
0 & 1/\sqrt 2
\end{pmatrix}$. 
Define $P:=P_1\otimes Q_1 + P_2 \otimes Q_2$,  
$\{A_j:=L_j^*\left. P\right|_{P \ell(\mathbb J)\otimes H_o}\}_{j\in \mathbb J}$, and $\{B_j:=RA_j\}_{j\in \mathbb J}$.
Then by Example \ref{E:dil}, $\{A_j\}$ is a Parseval operator-valued frame  and $P_A = P$. 
Since
\[
P(I\otimes R^*R)P= P_1\otimes Q_1R^*RQ_1+P_2\otimes Q_2R^*RQ_2 = P_1\otimes Q_+P_2\otimes Q_2 =P,
\]  
by Lemma \ref{L:left simil} (i) and (iii), $\{B_j\}_{j\in \mathbb J}$ is also a 
Parseval operator-valued frame and 
\begin{align*}
P_B&= (1\otimes R)\theta_A \theta_A^* (1\otimes R^*) 
=  (1\otimes R)P (1\otimes R^*)\\
&=P_1\otimes RQ_1R^* + P_2\otimes RQ_2R^*
= P_1\otimes Q_1 + P_2\otimes Q_3.
\end{align*}
This implies that if $P_B =UPU^*$ for some unitary $U$, then $U\not\in I\otimes B(H_o)$. Indeed
if $ U= 1\otimes W$ for some unitary $W\in B(H_o)$, then by the above computation
\[
P_B= P_1\otimes WQ_1W^* + P_2\otimes WQ_2W^* = P_1\otimes Q_1
+ P_2\otimes Q_3
\]
which is impossible since $WQ_1W^*WQ_2W^*=0$ while $Q_1Q_3\ne 0$.

\end{example}

Operator-valued frames can be both right and left-similar. The following proposition
 determines when this occurs.

\begin{proposition}\label{P:left right} Let $\{A_j\}_{j\in \mathbb J}\in\mathscr F$, $R\in B(H_o)$ 
be an invertible operator, and let $\{B_j:=RA_j\}_{j\in \mathbb J}$. Then the following conditions are 
equivalent.
\item[(i)] $\{B_j\}_{j\in \mathbb J}$ and $\{A_j\}_{j\in \mathbb J}$ are right-similar. 
\item[(ii)] $(I\otimes R)P_A = P_A(I\otimes R)P_A$ is invertible in $B(P_A\ell(\mathbb J)\otimes H_o)$.
\item[(iii)] $P_A^{\bot}(I\otimes R)P_A =0$ and $P_A^{\bot}(I\otimes R^{-1})P_A =0$.
\item[(iv)] 
\begin{equation}\label{eq:13}
RA_i=A_iS_A^{-1}\sum_{\j*}A_j^*RA_j \quad \text{for every} ~~i \in \mathbb J.
\end{equation}
and 
\begin{equation}\label{eq:14}
R^{-1}A_i=A_iS_A^{-1}\sum_{\j*}A_j^*R^{-1}A_j \quad \text{for every} ~~i \in \mathbb J.
\end{equation}
\item[(v)] \begin{equation*}
RA_i=A_iS_A^{-1}\sum_{\j*}A_j^*RA_j \quad \text{for every} ~~i \in \mathbb J.
\end{equation*}
and 
\begin{equation}\label{eq:15}
\biggl (\sum_{\j*}A_j^*RA_j \biggr)^{-1} 
= S_A^{-1}(\sum_{\j*}A_j^*R^{-1}A_j )S_A^{-1}
\end{equation}
If $R$ is unitary, then these conditions are also equivalent to 
\item[(vi)]$(I\otimes R)P_A = P_A(I\otimes R)$.
\item[(vii)] $R$ commutes with $A_jS_A^{-1}A_i^*$ for all $i,\j*$.
 \end{proposition}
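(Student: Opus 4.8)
The plan is to prove the cycle (i) $\Leftrightarrow$ (ii) $\Leftrightarrow$ (iii), then (ii) $\Leftrightarrow$ (iv) $\Leftrightarrow$ (v), and finally, under the additional hypothesis that $R$ is unitary, (ii) $\Leftrightarrow$ (vi) $\Leftrightarrow$ (vii). The backbone is Proposition \ref{P:simil}: since $\{B_j = RA_j\}_{j\in \mathbb J}$, Lemma \ref{L:left simil}(ii) gives $\theta_B = (I\otimes R)\theta_A$ and $M := \Phi_A(\{B_j\}_{j\in \mathbb J}) = (I\otimes R)P_A$. By Proposition \ref{P:simil}, the two frames are right-similar if and only if $M = P_A M P_A$ is invertible in $B(P_A \ell(\mathbb J)\otimes H_o)$; this is exactly statement (ii), so (i) $\Leftrightarrow$ (ii) is immediate. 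For (ii) $\Leftrightarrow$ (iii): writing $M = (I\otimes R)P_A$, the equation $M = P_A M P_A$ reduces to $P_A^\bot (I\otimes R)P_A = 0$, and invertibility of $M$ on $P_A\ell(\mathbb J)\otimes H_o$ should be shown equivalent to the companion condition $P_A^\bot(I\otimes R^{-1})P_A = 0$ — one direction is that if $M$ has inverse $N = P_A N P_A$ then $N = P_A(I\otimes R^{-1})P_A$ (check $MN = NM = P_A$ using $P_A(I\otimes R^{-1})(I\otimes R)P_A = P_A(I\otimes R^{-1})P_A(I\otimes R)P_A + P_A(I\otimes R^{-1})P_A^\bot(I\otimes R)P_A$ and the first relation in (iii)), whence $P_A(I\otimes R^{-1})P_A = (I\otimes R^{-1})P_A$, which is the second relation; conversely the two relations in (iii) make $P_A(I\otimes R^{-1})P_A$ a two-sided inverse of $M$.

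Next I translate (ii) into the "coordinate" equations (iv) and (v). Using the reconstruction formulas $A_j = L_j^*\theta_A$ and $\theta_A^*\theta_A = S_A$, conjugate the operator identity $(I\otimes R)P_A = P_A(I\otimes R)P_A$ by $\theta_A$ and $\theta_A^*$: apply $L_i^*(\,\cdot\,)\theta_A$ to both sides and use $P_A = \theta_A S_A^{-1}\theta_A^*$, $L_j^*(I\otimes R)L_j = R$, and $\theta_A^*(I\otimes R)\theta_A = \sum_{\j*} A_j^* R A_j$ (the latter converging strongly, in the spirit of (\ref{e:2transf})). This yields $R A_i = A_i S_A^{-1}\sum_{\j*} A_j^* R A_j$, i.e. the first equation of (iv); the second relation of (iii) gives the second equation of (iv) identically. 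Conversely, the two equations of (iv) recombine (multiply by $L_i$, sum over $i$, and use $\sum_i L_i L_i^* = I\otimes I_o$ together with $\theta_A S_A^{-1}\theta_A^* = P_A$) to give back (iii). For (iv) $\Leftrightarrow$ (v): granting the common first equation, I must show that, given it, equation (\ref{eq:14}) is equivalent to (\ref{eq:15}). Set $X := \sum_{\j*} A_j^* R A_j$ and $Y := \sum_{\j*} A_j^* R^{-1} A_j$; the first equation says $(I\otimes R)P_A = \theta_A S_A^{-1} X S_A^{-1}\theta_A^* \cdot (\text{something})$ — more precisely it forces $(I\otimes R)P_A = \theta_A S_A^{-1}X\,S_A^{-1}\theta_A^*\theta_A S_A^{-1}\theta_A^*$, from which $M$ invertible $\Leftrightarrow$ $S_A^{-1}X$ invertible in $B(H)$ with inverse computable as $S_A^{-1}\theta_A^*(I\otimes R^{-1})\theta_A S_A^{-1} = S_A^{-1} Y S_A^{-1}$; matching the two descriptions of the inverse of $S_A^{-1}X$ gives (\ref{eq:15}), and conversely.

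Finally, suppose $R$ is unitary. Then $I\otimes R$ is unitary, so $P_A^\bot(I\otimes R)P_A = 0$ is equivalent to $(I\otimes R)^*P_A(I\otimes R) \le P_A$, i.e. (passing to complements and using unitarity) to $(I\otimes R)P_A(I\otimes R)^* = P_A$, which is exactly $(I\otimes R)P_A = P_A(I\otimes R)$; and this single commutation relation already implies the second relation in (iii) upon multiplying by $(I\otimes R)^* = (I\otimes R^{-1})$ on both sides. Thus (iii) collapses to (vi) in the unitary case. For (vi) $\Leftrightarrow$ (vii): $(I\otimes R)$ commutes with $P_A = \theta_A S_A^{-1}\theta_A^*$ iff it commutes with the whole von Neumann algebra / operator system generated by $P_A$; compressing by $L_i$ on the left and $L_j$ on the right, $L_i^*(I\otimes R)P_A L_j = L_i^* P_A(I\otimes R)L_j$ becomes $R\,(A_i S_A^{-1}A_j^*) = (A_i S_A^{-1}A_j^*)\,R$, giving (vii); conversely, if $R$ commutes with every $A_i S_A^{-1}A_j^*$, then reassembling via $\sum_i L_i(\,\cdot\,)L_j^*$ and $\sum_j (\,\cdot\,)L_j^*$ recovers commutation of $I\otimes R$ with $P_A$. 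I expect the main obstacle to be the bookkeeping in the equivalence (ii) $\Leftrightarrow$ (v) — specifically, carefully justifying that "$M = (I\otimes R)P_A$ invertible on $P_A\ell(\mathbb J)\otimes H_o$" is faithfully encoded by invertibility of $S_A^{-1}X$ on $H$ and then extracting the precise inverse formula (\ref{eq:15}) rather than merely its existence; the strong-operator convergence of the series $\sum_{\j*} A_j^* R A_j$ and the interchange of these sums with the compressions by $L_i, L_j$ need the same care as in (\ref{e:2transf}), but are otherwise routine.
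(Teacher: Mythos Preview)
Your overall plan matches the paper's: use Lemma \ref{L:left simil} to identify $M=(I\otimes R)P_A$, invoke Proposition \ref{P:simil} for (i)$\Leftrightarrow$(ii), pass to the block conditions (iii), translate via $L_i^*(\,\cdot\,)\theta_A$ and $P_A=\theta_AS_A^{-1}\theta_A^*$ to (iv), and then relate (iv) and (v). For (iv)$\Leftrightarrow$(v) the paper argues by direct algebra (multiply (\ref{eq:13}) on the left by $A_i^*R^{-1}$, sum, and symmetrize), whereas you route through the unitary conjugation $\theta_AS_A^{-1/2}$ to transport invertibility of $M$ to invertibility of $X=\sum_jA_j^*RA_j$; both work, though your write-up is vague at this step.

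There is, however, a genuine error in your unitary-case argument for (iii)$\Leftrightarrow$(vi). You claim that, when $R$ is unitary, the \emph{single} relation $P_A^\bot(I\otimes R)P_A=0$ already forces $(I\otimes R)P_A(I\otimes R)^*=P_A$. This is false: take $\ell^2(\mathbb Z)$, let $P_A$ be the projection onto $\overline{\operatorname{span}}\{e_n:n\ge 0\}$, and let $I\otimes R$ be the bilateral shift $e_n\mapsto e_{n+1}$; then $P_A^\bot(I\otimes R)P_A=0$ but $(I\otimes R)P_A(I\otimes R)^*\ne P_A$. Your intermediate step ``$(I\otimes R)^*P_A(I\otimes R)\le P_A$'' is already the wrong inequality (it is equivalent to $P_A(I\otimes R)P_A^\bot=0$, not to the first relation), and in any case the inequality between two projections does not upgrade to equality without a finiteness hypothesis. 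The fix is immediate and is what the paper calls ``Obvious'': when $R$ is unitary, $R^{-1}=R^*$, so the \emph{second} relation of (iii) reads $P_A^\bot(I\otimes R)^*P_A=0$, whose adjoint is $P_A(I\otimes R)P_A^\bot=0$; combined with the first relation this gives exactly $(I\otimes R)P_A=P_A(I\otimes R)$, and conversely (vi) trivially yields both relations of (iii). Use both halves of (iii), not just one.
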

\begin{proof}
 
\item[(i)]$\Longleftrightarrow$ (ii) Let $M:=\Phi_A(\{B_j\}_{j\in \mathbb J})$. 
Then by Lemma \ref{L:left simil}, $M= (I\otimes R)P_A$. By Proposition \ref {P:simil},
$\{B_j\}_{j\in \mathbb J}$ is right-similar to $\{A_j\}_{j\in \mathbb J}$ if and only if
 $MP_A=P_AMP_A$ is invertible in $B(\ell(\mathbb J)\otimes H_o)$.

\item[(ii)]$\Longleftrightarrow$ (iii) Set
\[ 
I\otimes R:=\begin{pmatrix}
P_A(I\otimes R)P_A & P_A(I\otimes R)P_A^{\bot}\\
0 & P_A^{\bot}(I\otimes R)P_A^{\bot}
\end{pmatrix}
\quad \text{and} \]
\[
(I\otimes R)^{-1}:=\begin{pmatrix}
P_A(I\otimes R^{-1})P_A & P_A(I\otimes R^{-1}))P_A^{\bot}\\
P_A^{\bot}(I\otimes R^{-1}))P_A & P_A^{\bot}(I\otimes R^{-1}))P_A^{\bot}
\end{pmatrix}.
\]
Then it is immediate to verify that $P_A^{\bot}(I\otimes R^{-1}))P_A=0$ if and only if 
\[
(P_A(I\otimes R)P_A)(P_A(I\otimes R^{-1})P_A) = P_A,
\]
 and if and only if $P_A(I\otimes R)P_A$ is invertible. 

\item[(iii)]$\Longleftrightarrow$ (iv) Recall that $(I\otimes R)L_j=L_jR$ for all $\j*$, and that
$P_A = \theta_AS_A^{-1}\theta_A^*$. Therefore 
\[
(I\otimes R)P_A = (I\otimes R) \theta_AS_A^{-1}\theta_A^* 
=(I\otimes R)\sum_{\j*}L_jA_jS_A^{-1}\theta_A^*
=\sum_{\j*}L_jRA_jS_A^{-1}\theta_A^*
\]
and
\begin{align*}
P_A(I\otimes R)P_A
&=  \theta_AS_A^{-1}\theta_A^*(I\otimes R)\theta_AS_A^{-1}\theta_A^*\\
&= (\sum_{\j*}L_jA_j )S_A^{-1}(\sum_{i,j \in \mathbb J}A_i^*L_i^*(I\otimes R)L_jA_j )S_A^{-1}\theta_A^*\\
&= (\sum_{\j*}L_jA_j )S_A^{-1}(\sum_{i,j \in \mathbb J}A_i^*RL_i^*L_jA_j )S_A^{-1}\theta_A^*\\
&= (\sum_{\j*}L_jA_j )S_A^{-1}(\sum_{\j*}A_j^*RA_j )S_A^{-1}\theta_A^*.
\end{align*}
Hence $P_A^{\bot}(I\otimes R^{-1})P_A =0$ if and only if
\begin{equation}\label{eq:16}
\sum_{\j*}L_jRA_jS_A^{-1}\theta_A^* = (\sum_{\j*}L_jA_j )S_A^{-1}(\sum_{\j*}A_j^*RA_j )S_A^{-1}\theta_A^*.
\end{equation}
By multiplying (\ref{eq:16}) on the left by $L_i^*$ and on the right by $\theta_A$,
we obtain  (\ref{eq:13}). Conversely, by multiplying (\ref{eq:13}) on the left by $L_i$ and on the right 
by $S_A^{-1}\theta_A^*$ and summing over $i\in \mathbb J$ we obtain back (\ref{eq:16}). Thus 
$P_A^{\bot}(I\otimes R^{-1})P_A =0$ is equivalent to (\ref{eq:13}).
By the same argument, $P_A^{\bot}(I\otimes R^{-1})P_A =0$ is equivalent to (\ref{eq:14}).
\item[(iv)]$\Longleftrightarrow$ (v)
Assume that (\ref{eq:13})  and (\ref{eq:14}) hold. Then 
\[
A_i^* A_i = A_i^*R^{-1}A_i S_A^{-1}(\sum_{\j*}A_j^*RA_j ) \quad \text{for all $i$}
\]
and hence by summing over $i\in \mathbb J$ and then multiplying on the left and on the right by $S_A^{-1/2}$  
we obtain
\[
S_A^{-1/2}(\sum_{i\in \mathbb J}A_i^*R^{-1}A_i )S_A^{-1}(\sum_{\j*}A_j^*RA_j )S_A^{-1/2} = I.
\]
Similarly, 
\[
S_A^{-1/2}(\sum_{i\in \mathbb J}A_i^*RA_i )S_A^{-1}(\sum_{\j*}A_j^*R^{-1}A_j )S_A^{-1/2} = I.
\]
Thus 
\[
\biggl (S_A^{-1/2}(\sum_{\j*}A_j^*RA_j )S_A^{-1/2} \biggr )^{-1}
= S_A^{-1/2} (\sum_{\j*}A_j^*R^{-1}A_j ) S_A^{-1/2},
\]
and hence (\ref{eq:15}) holds. Conversely, if (\ref{eq:13}) and (\ref{eq:15})  hold,
 then by multiplying (\ref{eq:13}) on the right by the right hand side of (\ref{eq:15})
 we easily obtain (\ref{eq:14}).

Assume now that $R$ is unitary.
 \item[(iii)]$\Longleftrightarrow$ (vi) Obvious
\item[(vi)]$\Longrightarrow$ (vii) As seen above in course of the proof of  (iii) $\Longleftrightarrow$ (iv), 
\[
(I\otimes R)P_A
=\sum_{i,j\in \mathbb J}L_jRA_jS_A^{-1}A_i^*L_i^*
\] 
and
\[
P_A(I\otimes R)
=
=\sum_{i,j\in \mathbb J}L_jA_jS_A^{-1}A_i^*RL_i^*.
\]
Thus
\[
\sum_{i,j\in \mathbb J}L_jRA_jS_A^{-1}A_i^*L_i^*
=\sum_{i,j\in \mathbb J}L_jA_jS_A^{-1}A_i^*RL_i^*.
\]
By multiplying this identity
 on the left by $L_j^*$ and on the right by $L_i$ and recalling (\ref{eq:L}) we obtain (vii). 
\item[(vii)]$\Longrightarrow$ (iv) For every $i \in \mathbb J$, 
\[
A_iS_A^{-1}\sum_{\j*}A_j^*RA_j= RA_iS_A^{-1}\sum_{\j*}A_j^*A_j
=RA_i,
\]
i.e., (\ref{eq:13}) holds. Since $R^{-1}=R^*$ also commutes with all $A_jS_A^{-1}A_i^*$, the same argument
shows that (\ref{eq:14}) too holds.
\end{proof}

\subsection*{Composition of frames} Let  $A=\{A_j\}_{\j*} $
 be an \o* in $B(H,H_o)$ and $B=\{B_m\}_{m\in \mathbb M}$ be an \o* in $B(H_o, H_1)$. 
Then it is easy to check that $\{C_{(m,j)}:= B_mA_j\}_{m\in \mathbb M, j\in \mathbb J}$ 
is an \o* in $B(H, H_1)$. 
The \o* $\{C_{(m,j)}\}_{(m,j)\in \mathbb M\times \mathbb J}$ is called the composition of
 the frames $A=\{A_j\}_{\j*} $ and $B=\{B_m\}_{m\in \mathbb M}$.

\begin{proposition}\label{P: compos}  Let $\{C_{(m,j)} := B_mA_j\}_{(m,j)\in \mathbb M\times \mathbb J}$
 be the composition of the \o*s $\{A_j\}_{\j*} $
 and $B=\{B_m\}_{m\in \mathbb M}$. Then
\item[(i)]
$\theta _C=(I_{\mathbb J}\otimes \theta _B)\theta _A$ and  
 $S_C=\theta _A^* (I_{\mathbb J}\otimes S_B)\theta _A$ 
where $I_{\mathbb J}$ is the identity on $l^2(\mathbb J)$.
In particular, if $\{A_j\}_{\j*} $ and  $\{B_m\}_{m\in \mathbb M}$ are Parseval, 
then $\{C_{(m,j)}\}_{m\in \mathbb M, j\in \mathbb J}$ is Parseval.
\item[(ii)] If $P_A$ commutes with $(I_{\mathbb J}\otimes S_B)$, then
$P_C= (I_{\mathbb J}\otimes \theta _B)P_A
(I_{\mathbb J}\otimes S_B^{-1})P_A(I_{\mathbb J}\otimes \theta _B^*).$
\end{proposition}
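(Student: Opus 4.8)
The plan is to reduce both parts to the single factorization $\theta_C=(I_{\mathbb J}\otimes\theta_B)\theta_A$ and then to compute with the standard relations $S_A=\theta_A^*\theta_A$, $P_A=\theta_A S_A^{-1}\theta_A^*$, $P_A\theta_A=\theta_A$ from Proposition~\ref{P:transform} and equation~(\ref{eq:P}) (and their analogues for $B$).

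First I would fix the Hilbert space identification
\[
\ell^2(\mathbb M\times\mathbb J)\otimes H_1\;\cong\;\ell^2(\mathbb J)\otimes\bigl(\ell^2(\mathbb M)\otimes H_1\bigr),\qquad e_{(m,j)}\otimes h\;\longleftrightarrow\;e_j\otimes(e_m\otimes h),
\]
under which $\theta_C$ takes values in $\ell^2(\mathbb J)\otimes(\ell^2(\mathbb M)\otimes H_1)$, the codomain of $I_{\mathbb J}\otimes\theta_B$. Using the explicit formula for the analysis operator, $\theta_C x=\sum_{(m,j)}e_{(m,j)}\otimes C_{(m,j)}x=\sum_j e_j\otimes\bigl(\sum_m e_m\otimes B_mA_jx\bigr)=\sum_j e_j\otimes\theta_B(A_jx)$, and comparing with $(I_{\mathbb J}\otimes\theta_B)\theta_A x=(I_{\mathbb J}\otimes\theta_B)\sum_j(e_j\otimes A_jx)$, one reads off $\theta_C=(I_{\mathbb J}\otimes\theta_B)\theta_A$; convergence of the defining net is inherited from those for $\theta_A$ and $\theta_B$. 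Taking adjoints and applying Proposition~\ref{P:transform}(ii) gives $S_C=\theta_C^*\theta_C=\theta_A^*(I_{\mathbb J}\otimes\theta_B^*\theta_B)\theta_A=\theta_A^*(I_{\mathbb J}\otimes S_B)\theta_A$, which can also be written $S_C=\sum_{\j*}A_j^*S_BA_j$ and makes the inequalities controlling the frame bounds of $C$ (hence the fact that $C$ is an \o*) transparent. Part~(i) then closes by specializing to $S_A=I$, $S_B=I$, which yields $S_C=\theta_A^*\theta_A=S_A=I$.

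For part~(ii) the only non-formal step is to produce $S_C^{-1}$ explicitly. Under the hypothesis that $P_A$ commutes with $I_{\mathbb J}\otimes S_B$ (equivalently with $I_{\mathbb J}\otimes S_B^{-1}$) I claim
\[
S_C^{-1}=S_A^{-1}\theta_A^*(I_{\mathbb J}\otimes S_B^{-1})\theta_A S_A^{-1}.
\]
I would verify this by multiplying out $S_C\bigl(S_A^{-1}\theta_A^*(I_{\mathbb J}\otimes S_B^{-1})\theta_A S_A^{-1}\bigr)$: collapse the inner $\theta_AS_A^{-1}\theta_A^*$ to $P_A$, move $P_A$ past $I_{\mathbb J}\otimes S_B^{-1}$ using the commutation so that $(I_{\mathbb J}\otimes S_B)P_A(I_{\mathbb J}\otimes S_B^{-1})=P_A$, and then apply $P_A\theta_A=\theta_A$ and $\theta_A^*\theta_A=S_A$ to reach $I_H$; the product in the opposite order is symmetric. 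Substituting this into $P_C=\theta_CS_C^{-1}\theta_C^*$ (equation~(\ref{eq:P}) for the frame $C$), with $\theta_C=(I_{\mathbb J}\otimes\theta_B)\theta_A$ and $\theta_C^*=\theta_A^*(I_{\mathbb J}\otimes\theta_B^*)$, the two resulting copies of $\theta_AS_A^{-1}\theta_A^*$ each collapse to $P_A$ and yield exactly
\[
P_C=(I_{\mathbb J}\otimes\theta_B)P_A(I_{\mathbb J}\otimes S_B^{-1})P_A(I_{\mathbb J}\otimes\theta_B^*).
\]

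The main obstacle, such as it is, lies entirely in part~(ii): without the commutation hypothesis the expression $(I_{\mathbb J}\otimes S_B)P_A(I_{\mathbb J}\otimes S_B^{-1})$ does not simplify to $P_A$, so $S_C^{-1}$ has no comparably clean closed form and the stated identity for $P_C$ fails. Everything else — the identification of the tensor legs, the passage from $\theta_C$ to $S_C$, and the Parseval case — is routine manipulation of the analysis operators.
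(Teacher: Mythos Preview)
Your argument is correct and follows essentially the same route as the paper: the identification of $\theta_C$ with $(I_{\mathbb J}\otimes\theta_B)\theta_A$ via the explicit tensor formula, the immediate consequence for $S_C$, and for part~(ii) the same closed form $S_C^{-1}=S_A^{-1}\theta_A^*(I_{\mathbb J}\otimes S_B^{-1})\theta_A S_A^{-1}$ plugged into $P_C=\theta_C S_C^{-1}\theta_C^*$. The only cosmetic difference is that the paper justifies this inverse by first noting $P_A(I_{\mathbb J}\otimes S_B^{-1})P_A=(P_A(I_{\mathbb J}\otimes S_B)P_A)^{-1}$ in the corner $P_A$, whereas you verify the same identity by a direct multiplication; these are equivalent one-line checks.
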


\begin{proof}
\item[(i)] Let $\{e_j\}_{\j*}$ and $\{f_m\}_{m\in \mathbb M}$ denote the standard orthonormal bases of
 $\ell^2(\mathbb J)$ and  $\ell^2(\mathbb M)$, respectively. Then for any $x\in H$,
\begin{align*}
 (I_{\mathbb J} \otimes \theta _B)\theta _A (x) 
&= (I_{\mathbb J} \otimes \theta _B)\sum_{\j*} (e_j\otimes A_jx)
= \sum_{\j*} (e_j\otimes \theta _B(A_jx))\\
&= \sum_{m\in \mathbb M, j\in \mathbb J}(e_j\otimes f_m\otimes B_mA_jx) 
= \theta _C (x). 
\end{align*}
The formula for $S_C$ now follows directly, as well as the Parseval case.
\item[(ii)] If $P_A$ commutes with  $(I_{\mathbb J}\otimes S_B)$, then
\[ 
P_A(I_{\mathbb J}\otimes S_B^{-1})P_A = (P_A(I_{\mathbb J}\otimes S_B)P_A)^{-1},
\]
hence
$S_C^{-1}=S_A^{-1}\theta _A^* (I_{\mathbb J}\otimes S_B^{-1})\theta _AS_A^{-1}$,
and thus
\begin{align*}
P_C&=\theta_CS_C^{-1}\theta_C^*
=(I_{\mathbb J} \otimes \theta _B)\theta _AS_A^{-1}\theta _A^* (I_{\mathbb J}\otimes S_B^{-1})\theta _A
S_A^{-1}\theta _A^*(I_{\mathbb J} \otimes \theta _B)\\
&=(I_{\mathbb J} \otimes \theta _B)P_A (I_{\mathbb J}\otimes S_B^{-1})P_A(I_{\mathbb J} \otimes \theta _B).
\end{align*}

\end{proof}

\begin{remark}\label{R:compos} 
\item[(i)] If the composition of an \o* $\{B_m\}_{m\in \mathbb I}$ with the frame $\{A_j\}_{\j*} $
 is the same as the composition with the frame $\{A'_j\}_{\j*}$, then $\{A_j\}_{\j*} =
\{A'_j\}_{\j*} $. Indeed, if $B_mA_j=B_mA'_j$ for all $m\in \mathbb M$ and $j\in \mathbb J$, 
 then 
\[
\sum_{m\in M} B_m^*B_mA_j=S_BA_j=S_BA'_j,
\]
 and hence  $A_j=A_j'$, since $S_B$ is invertible.
\item[(ii)] Assume that $\bigcup _{j\in \mathbb J} A_j H_o$ is dense in $H_o$. Then the composition of $\{A_j\}_{j\in \mathbb J} $ with 
$\{B_m\}_{m\in \mathbb M}$ equals the composition of $\{A_j\}_{j\in \mathbb J} $ with $\{B'_m\}_{m\in \mathbb M}$
 if and only if $\{B_m\}_{i\in \mathbb I}=\{B'_m\}_{m\in \mathbb M}$.
\end{remark}

Given an \o*  $\{A_j\}_{j\in \mathbb J}$ with $A_j \in B(H,H_o)$ and given an arbitrary vector frame
$\{B_m\}_{m\in \mathbb M}$ with $B_m \in B(H_o, \mathbb C)$, we can view the vector frame
$\{C_{(m,j)} := B_mA_j\}_{(m,j)\in \mathbb M\times \mathbb J}$ to be a ``decomposition" of the original
\o*. 

This decomposition is of course not unique. For instance, in the discussion after Definition \ref{D:oper frame},
 we chose $\{B_m\}_{m\in \mathbb M}$ to be an orthonormal basis $\{e_m\}_{m\in \mathbb M}$ 
of $H_o$  and then $C_{(m,j)} := B_mA_j$ corresponds to the vector frame 
$\{A_j^*e_m\}_{(m,j)\in \mathbb M\times \mathbb J}$.
If $\{{B_m'}\}_{m\in \mathbb M}$ is (right) similar to $\{B_m\}_{m\in \mathbb M}$, i.e., ${B_m'} = B_mR$ 
for some invertible operator $R\in B(H_o)$  and all $m\in \mathbb M$ (i.e., if $B_m' $ is a Riesz frame 
(see Remark \ref{R:Riesz}), then $C'_{(m,j)} := B_mRA_j$ corresponds to the vector frame 
$\{A_j^*R^*e_m\}_{m\in \mathbb M, j\in \mathbb J}$ and provides a vector frame decomposition 
both of the \o*  $\{A_j\}_{j\in \mathbb J}$ and also of the left-similar  \o*  $\{RA_j\}_{j\in \mathbb J}$.
$\{A_j^*R^*e_m\}_{(m,j)\in \mathbb M\times \mathbb J}$ is similar to $\{A_j^*e_m\}_{(m,j)\in \mathbb M\times \mathbb J}$
only if $R$ satisfies the conditions of Proposition {P:left right}. However, if $T\in B(H)$ is invertible, 
$\{TA_j^*e_m\}_{(m,j)\in \mathbb M\times \mathbb J}$ is by definition similar to 
$\{A_j^*e_m\}_{(m,j)\in \mathbb M\times \mathbb J}$ and provides a vector decomposition of the \o* 
$\{A_jT^*\}_{j\in \mathbb J}$.

\section{Dual Frames, Complementary Frames, and Disjointness} \label{S:dual}
\subsection*{Dual frames}

A vector frame  $\{b_j\}_{j\in \mathbb J}$ on a Hilbert space $H$ is said to be the dual of another vector frame
 $\{a_j\}_{j\in \mathbb J}$ on $H$ if 

\[
x=\sum_{j\in \mathbb J}~(x,a_j)b_j \quad \text{for all}~ x\in H.
\]

It is well known and easy to see that this condition can be reformulated in terms of the analysis operators 
$\theta_b$ and $\theta_a$ of the two frames as
\[
\theta_b^*\theta_a = I
\]
In particular, $\{D_a^{-1}a_j\}_{j\in \mathbb J}$ has analysis operator $\theta_aD_a^{-1}$ and is called the \emph{canonical 
dual} of the frame $\{a_j\}_{j\in \mathbb J}$ (other duals are called \emph{alternate duals}).
When $\{a_j\}_{j\in \mathbb J}$ is Parseval, the identity
\[
x = \theta_a^*\theta_a x= \sum_{j\in \mathbb J}~(x,a_j)a_j \quad \text{for all}~ x\in H
\]
is called the \emph{reconstruction formula} (see \cite [Sections 1.2, 1.3]{HL00})

These notions extend naturally to operator valued frames:

\begin{definition}\label{D:dual} Let  $\{A_j\}_{j\in \mathbb
J},\{B_j\}_{j\in \mathbb J}\in \mathscr F$. Then 
$\{B_j\}_{j\in \mathbb J}$ is called a dual of
$\{A_j\}_{j\in \mathbb J}~$ if $\theta_B^*\theta_A =I$.  The operator-valued frame
$\{A_jS_A^{-1}\}_{j\in \mathbb J}$ is called the \emph{canonical
dual frame} of $\{A_j\}_{j\in \mathbb J}.$
\end{definition}

\bR{R:dual} \item[(i)] Notice that $\theta_B^*\theta_A =I$ if and only if $\theta_A^*\theta_B =I$, i.e., 
$\{B_j\}_{j\in \mathbb J~}$ is
 a dual of $\{A_j\}_{j\in \mathbb J}$ if and only if $\{A_j\}_{j\in \mathbb J}$ is
 a dual of $\{B_j\}_{j\in \mathbb J}$ (cfr. \cite[Proposition 1.13]{HL00}). 
\item[(ii)] If we consider two \o*s $\{A_j\}_{j\in \mathbb J}$ with $A_j\in B(H_1, H_o)$ and  $\{B_j\}_{j\in \mathbb J}$ 
with $B_j\in B(H_2, H_o)$ with a given unitary $U: H_1 \mapsto H_2$, and we want to keep track of the different
 Hilbert spaces, then we would define duality between them (relative to the choice of the unitary $U$) by asking that 
$\theta_B^*\theta_A =U$, or, equivalently, that $\theta_A^*\theta_B =U^*.$

\eR

In general, duals are far from unique. Theorem \ref {T:param} provides the natural way to parametrize the collection
 of all the dual frames of a given \o*. Recall that for  $\{A_j\}_{j\in \mathbb J},\{B_j\}_{j\in \mathbb J}\in \mathscr F$,
there is a unique $M\in B(\ell(\mathbb J)\otimes H_o)$ such that $\theta_B=M\theta_A$, $M=MP_A$ and
$M^*M$ is invertible in $B(P_A\ell(\mathbb J)\otimes H_o)$, namely $M:=\Phi_A(\{B_j\}_{j\in \mathbb J})$. In the two-by-two matrix relative to the decomposition\linebreak
$P_A+P_A^\bot=I\otimes I_o$,
$ M=\begin{pmatrix}
X & 0\\
Y & 0
\end{pmatrix}$ and $X^*X+Y^*Y$ invertible. 

\bP{P:dualparam}
Let $\{A_j\}_{j\in \mathbb J},\{B_j\}_{j\in \mathbb J}\in \mathscr F$ and let $M:=\Phi_A(\{B_j\}_{j\in \mathbb J})$. 
Then the following conditions are equivalent.
\item [(i)] $\{B_j\}_{j\in \mathbb J}$ is a dual of $\{A_j\}_{j\in \mathbb J}$.
\item [(ii)] $\sum_{j\in \mathbb J}~B_j^*A_j=I$ where the convergence is in the strong operator topology.
\item [(iii)] $P_AMP_A= \theta_A^*S_A^{-2}\theta_A$.

\noindent In particular, $\{B_j\}_{j\in \mathbb J}$ is the canonical dual of $\{A_j\}_{j\in \mathbb J}$ if and only 
if $M=P_AMP_A$.
\eP
\bp
\item [(i)] $\Longleftrightarrow$ (ii). Obvious by (\ref{e:2transf})
\item [(i)] $\Longleftrightarrow$ (iii) By Theorem \ref{T:param}, there is a unique operator $M\in B(\ell(\mathbb J)\otimes H_o\,)$ with 
$M=MP_A$ and $\left.M^*M P_A\right.|_{P_A\negmedspace \ell(\mathbb J)\otimes H_o}$ invertible,  for which $B_j=L_j^*M\theta_A$ for all 
$j\in \mathbb J$, or, equivalently, for which $\theta_B= M\theta_A$. Then 
$I= \theta_A^*\theta_B = \theta_A^*M\theta_A $ if and only if 
\[
P_AMP_A= \theta_A^*S_A^{-1}\theta_A^*M\theta_AS_A^{-1}\theta_A^*=\theta_A^*S_A^{-2}\theta_A.
\]
If for some $M=MP_A$, $P_AMP_A= \theta_A^*S_A^{-2}\theta_A$ which is invertible, then also 
$\left.M^*M P_A\right.|_{P_A\negmedspace \ell(\mathbb J)\otimes H_o}$ invertible.

By definition, $\{B_j\}_{j\in \mathbb J}~$ is the canonical dual of $\{A_j\}_{j\in \mathbb J}~$ if and only if
\[
\theta_B= \theta_AS_A^{-1} = \bigl( \theta_AS_A^{-2}\theta_A^*\bigr)\theta_A,
\]
i.e., if and only if $M= \theta_AS_A^{-2}\theta_A^*$.

\ep

By the above proposition, the only \o*s that have a unique dual frame are those with range projection 
$P_A = I \otimes I_o$. By Remark \ref {R:Riesz}, these are the Riesz \o*s (cfr. \cite [Corollary 2.26]{HL00}).

Given an \o* $\{A_j\}_{j\in \mathbb J}$, set $M:=\Phi_A(\{B_j\}_{j\in \mathbb J})=\begin{pmatrix}
X & 0\\
Y & 0
\end{pmatrix}$.
Proposition \ref{P:dualparam} (iii) states that dual frames are characterized by $X= \theta_AS_A^{-2}\theta_A^*$ 
(by $X=P_A$ if $\{A_j\}_{j\in \mathbb J}$  is Parseval.)  

Frames that are right-similar to $\{A_j\}_{j\in \mathbb J}$ are characterized by $Y=0$ (see Proposition \ref {P:simil}).
Thus  the only frame that is both right-similar and dual to $\{A_j\}_{j\in \mathbb J}$ corresponds to
$X= \theta_AS_A^{-2}\theta_A^*$ and $Y=0$ and hence it is the canonical dual of 
$\{A_j\}_{j\in \mathbb J}$.  Equivalently, if two right-similar frames are both the dual
 of a given frame, then they are equal. To summarize:

\bP{P:LH} \item[(i)] (cfr. \cite[Proposition 1.14]{HL00}) If two \o*s are right-similar and are dual of the same
\o*, then they are equal.
 \item[(ii)] If two Parseval \o*s are one the dual of the other, then they are equal.
\eP

In general there are infinitely many dual frames of a given \o* that are left-similar to it.
\begin{example}\label{E:Sdual}. Let $H:=\ell^2(\mathbb J)$, $H_o:=\mathbb C^2$,  
$Q_1:=\begin{pmatrix}
1 & 0\\
0 & 0
\end{pmatrix},
Q_2:=\begin{pmatrix}
0 & 0\\
0 & 1
\end{pmatrix}$, 
  and  
$R:=\begin{pmatrix}
1 & 0\\
\lambda & 1
\end{pmatrix}$. 
Define $P:=I\otimes Q_1$ and the \o*s   \linebreak
$\{A_j:=L_j^*\left. P\right|_{P \ell(\mathbb J)\otimes H_o}\}_{j\in \mathbb J}$, and $\{B_j:=RA_j\}_{j\in \mathbb J}$.
Then by Example \ref{E:dil}, $\{A_j\}$ is a Parseval operator-valued frame  and $P_A = P$. 
Now $M=(I\otimes R)P$, $PMP = P$ and hence $\{B_j:=RA_j\}_{j\in \mathbb J}$ is a dual of  
$\{A_j\}_{j\in \mathbb J}$ for every $\lambda$. However, $P^\bot MP= I\otimes\begin{pmatrix}
0 & 0\\
\lambda & 1
\end{pmatrix}$, hence every $\lambda$ defines a different \o* $\{B_j\}_{j\in \mathbb J}$.

\end{example}

\subsection*{Disjoint frames and complementary frames}
The treatment given in \cite [Chapter 2] {HL00} for the vector case, generalizes without difficulties to the
higher multiplicity case. For the readers' convenience we present the definitions and one of the key arguments.

\bD{D:complem}
Let $\{A_j\}_{j\in \mathbb J}$ and $\{B_j\}_{j\in \mathbb J}$ be two \o*s with \linebreak $A_j \in B(H_A,H_o)$ and 
$B_j \in B(H_B,H_o)$  for all $j\in \mathbb J$ .
Then the two frames are called:
\item [(i)] disjoint if  $\{A_j \oplus B_j\}_{j\in \mathbb J}$ is an \o*.
\item [(ii)] strongly disjoint (also called orthogonal) if there are two invertible operators $T_A\in B(H_A)$ and \\
$T_B\in B(H_B)$
such that $\{A_jT_A \oplus B_jT_B\}_{j\in \mathbb J}$ is a Parseval \o* on $H_A \oplus H_B$
\item [(iii)] strongly complementary if there are two invertible operators $T_A\in B(H_A)$ and $T_B\in B(H_B)$
such that $\{A_jT_A \oplus B_jT_B\}_{j\in \mathbb J}$ is an orthonormal \o* on $H_A \oplus H_B$.
\eD

\bP{P:complem}
Let $\{A_j\}_{j\in \mathbb J}$ and $\{B_j\}_{j\in \mathbb J}$ be two \o*s as in the Definition \ref {D:complem}. Then
the two frames are:

\item [(i)]disjoint if and only if $P_AH_{\mathbb J~}\cap P_BH_{\mathbb J~} = \{0\}$ and 
$P_AH_{\mathbb J~}+P_BH_{\mathbb J~}$ is closed
\item [(ii)] strongly disjoint (orthogonal) if and only if $P_AP_B=0$ (i.e., $P_A\bot P_B$) if and only if $\theta_A^*\theta_B=0$,
 if and only if $\theta_B^*\theta_A=0$
\item [(iii)] strongly complementary if and only if $P_A + P_B=I\otimes I_o.$
\eP

\bp
\item [(i)] $\{A_j \oplus B_j\}_{j\in \mathbb J}$ is an \o* if and only if $\{A_jT_A \oplus B_jT_B\}_{j\in \mathbb J}$
is also an \o* for any choice of invertible operators $T_A$ and $T_B$. Since right similarities do not change the 
frame projections (see Proposition \ref {P:simil}), we can assume that both frames are Parseval.
Since $(A_j\oplus B_j)(x\oplus y) = A_jx+ B_jy$ for all $x\in H_A,~y\in H_B$, a simple computation shows
 that $(A_j\oplus B_j)^*z= A_j^*z\oplus B_j^*z$ for all $z\in H_o$, hence
\begin{align*}
\bigl((A_j\oplus B_j)&^*(A_j\oplus B_j)(x\oplus y), (x\oplus y) \bigr)= (A_jx + B_jy,A_jx+ B_jy)\\
&=(A_jx,A_jx) +  (B_jy,B_jy ) +(A_jx,B_jy)+(B_jy,A_jx)\\
&= (A_j^*A_jx,x) + (B_j^*B_jy, y)+(B_j^*A_jx, y)+(A_j^*B_jy, x).
\end{align*}
Now sum over  $\mathbb J$ using the fact (see (\ref{e:2transf})) that all the
series converge in the strong operator topology, e.g., $\sum_{j\in \mathbb J}A_j^*B_j=\theta_A^*\theta_B$ and
similarly for the other series. Thus 
\begin{align*}
\bigl(\sum_{j\in \mathbb J} \bigl((A_j\oplus B_j)^*(A_j\oplus B_j)&(x\oplus y), (x\oplus y) \bigr) = 
(\theta_A^*\theta_Ax,x)+(\theta_B^*\theta_By,y)+ \\
&+( \theta_B^*\theta_Ax, y) +( \theta_A^*\theta_By, x) = \bigl(\theta_Ax+\theta_By,\theta_Ax+\theta_By \bigr).
\end{align*}
As in the proof of \cite [Theorem 2.9]{HL00}, the projections $P_A$ and $P_B$ satisfy the condition in (i) if and only if
\[
 a\|\theta_Ax+\theta_By\|\le \sqrt{\|\theta_Ax\|^2+\|\theta_By\|^2} = \sqrt{\|x\|^2+\|y\|^2} = \|x\oplus y\|
\le b\|\theta_Ax+\theta_By\|
\]
for some $a, b > 0$. By definition, this is precisely the condition that guarantess that 
$\{A_j \oplus B_j\}_{j\in \mathbb J}$ is an \o*.

\item[(ii)] It is clear that the three conditions $P_AP_B=0$, $\theta_A^*\theta_B=0$, and 
$\theta_B^*\theta_A=0$ are all equivalent and that they also imply the condition in (i).  If these conditions hold, 
by the proof of (i),
$\bigl((D_{A\oplus B}(x\oplus y),(x\oplus y)\bigr)=(S_Ax,x)+(S_By,y)$ for all $x\in H_A,~y\in H_B$. In particular,  
$D_{A\oplus B}= I_A\otimes I_B$ if (and only if) $S_A=I_A$ and $S_B=I_B$, i.e., it is sufficient to choose
 right similarities that make the two \o*s Parseval to obtain that their direct sum is also Parseval.
Conversely, assume without loss of generality that the direct sum of the frames is already Parseval and hence 
both frames are Parseval, then,
 again by the computation in the first part of the proof, 
\[
\|\theta_A x\|^2+\|\theta_Ay\|^2=\|x\|^2+\|y\|^2=\bigl(D_{A\oplus B}(x\oplus y),(x\oplus y)\bigr) 
= \|\theta_A x+ \theta_B y\|^2.
\]
This clearly implies that the ranges of $ \theta_A$ and $ \theta_B$ are orthogonal

\item[(iii)] From the proof of (ii), it is easy to see that if $P_AP_B=0$, then $P_{A+B}=P_A+P_B$. The 
rest of the proof is then obvious.

\ep

\bC{C:param complem} Let $\{A_j\}_{j\in \mathbb J}$ be an \o* on a Hilbert space $H$, with range in $H_o$. 
\item[(i)] Up to right unitary equivalence, the collection of strong complements of  $\{A_j\}_{j\in \mathbb J}$
 is uniquely parametrized by 
\[
\bigl\{\{L_j^*T \}_{\j*}\mid  T=P_A^\bot TP_A^\bot \ge 0,\, T \text{ invertible in }  B(P_A^\bot \ell(\mathbb J)\otimes H_o)\bigr\}.
\]
\item[(ii)] Up to right unitary equivalence, the collection of \o* strongly disjoint from  $\{A_j\}_{\j*}$
  is uniquely parametrized by
\[
\bigl\{\{L_j^*T \}_{\j*} \mid P\le P_A^\bot, T=PTP \ge 0,\, T \text{ invertible in } 
 B(P\ell(\mathbb J)\otimes H_o) \bigr\}.
\]
\eC

\section{Unitary Systems and Groups} \label{S:groups}
\subsection*{General Unitary Systems and Local Commutants}

Following the terminology of \cite {DL98}  and \cite {HL00}  a unitary system $\mathscr U$ on a Hilbert space $H$ is simply a collection of unitary operators that includes the identity. Following the customary terminology for (vector) frames, we introduce the analogous notion for operator-valued frames:

\begin{definition}\label{D:generator} An operator $A\in B(H,H_o)$ is called an operator frame generator 
for a unitary system $\mathscr U$ if $\{AU^*\}_{U\in \mathscr U}$ is an \o*. If $\{AU^*\}_{U\in \mathscr U}$
is Parseval, $A$ is said to be a Parseval operator frame generator.
\end{definition}

If $\dim H_o = 1$, i.e., $A$ corresponds to a (unique) vector $\psi$, then $AU^*$ corresponds to the vector $U\psi$.
Recall from \cite[Proposition 3.1]{HL00} that given a wavelet generator for a unitary
 system $\mathscr U$, i.e., a vector $\psi$ such that $\{U\psi\}_{U\in \mathscr U}$ is an orthonormal basis, 
then a vector $\phi$ is a Parseval frame generator from $\mathscr U$ 
if and only if $\phi=V\psi$ for a (unique) co-isometry $V$ such that $(VU-UV)\psi=0$ for every $U\in\mathscr U$. 
The \emph{local commutant} at $\psi$ is defined in \cite{DL98} as the collection  
\[
C_{\psi}(\mathscr U) := \{T\in B(H) \mid (TU-UT)\psi=0 \quad\text{for all} \quad U\in \mathscr U \}. 
\] 
The multi-dimensional analog of an orthonormal basis is a collection of partial
 isometries with mutual orthogonal equivalent domains spanning the Hilbert space, or equivalently, an \o* with 
frame projection $\theta _A\theta _A^*=I$.

\begin{proposition}\label{P:comm} Suppose that $A\in B(H,H_o)$ is a frame generator 
for a unitary system $\mathscr U$ for which $\theta_A\theta_A^*=I$. Then an operator $B\in B(H,H_o)$ is
a Parseval frame generator for $\mathscr U$ if and only if $B=AV^*$ for some co-isometry $V$
 such that $(VU-UV)A^*=0$ for every $U\in\mathscr U$. If $B=AV^*$ for such co-isometry, then 
$V:= \theta_B^*\theta_A$.  
\end{proposition}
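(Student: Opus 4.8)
The plan is to translate the generator-level statement into the language of analysis operators, where Proposition~\ref{P:dil} and Corollary~\ref{C:param Pars} do all the work, and then read the translation back. First I would set up notation: writing $\mathscr U=\{U_j\}_{j\in\mathbb J}$ (with $U_0=I$), the collection $\{AU_j^*\}_{j\in\mathbb J}$ is by hypothesis an orthonormal \o*, so $\theta_A\theta_A^*=I\otimes I_o$ in addition to $\theta_A^*\theta_A=S_A=I$ (note that $S_A=\sum_j (AU_j^*)^*(AU_j^*)=\sum_j U_jA^*AU_j^*$, and this being $I$ is part of ``Parseval'' hidden in $\theta_A\theta_A^*=I$; in any case $\theta_A$ is a unitary from $H$ onto $\ell(\mathbb J)\otimes H_o$). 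So $P_A=I\otimes I_o$.

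Next I would apply Corollary~\ref{C:param Pars} with the Parseval frame $\{A_j:=AU_j^*\}$: since $P_A=I\otimes I_o$, every Parseval \o* in $\mathscr F$ has the form $\{L_j^*V\theta_A\}_{j\in\mathbb J}$ for a \emph{unique} $V$ with $V^*V=P_A=I\otimes I_o$, i.e.\ a unique isometry $V\in B(\ell(\mathbb J)\otimes H_o)$, and $V=\theta_B\theta_A^*$ recovers it. Given a Parseval generator $B$ for $\mathscr U$, apply this to the Parseval \o* $\{BU_j^*\}$, obtaining an isometry $W:=\theta_{\{BU_j^*\}}\theta_A^*$ with $BU_j^*=L_j^*W\theta_A$ for all $j$. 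The co-isometry I want is $V:=W^*=\theta_A\theta_{\{BU_j^*\}}^*$, which is well-defined as an element of $B(H,H_o)$ after the identification $H\cong\ell(\mathbb J)\otimes H_o$ via the unitary $\theta_A$; concretely $V=\theta_A^*W^*\theta_A$ regarded on $H$, a co-isometry since $W$ is an isometry. Taking $j=0$ (so $U_0=I$) in $BU_j^*=L_j^*W\theta_A$ gives $B=L_0^*W\theta_A$; using $A=L_0^*\theta_A$ (from (\ref{eq:reconstr})) and $\theta_A\theta_A^*=I$ one checks $AV^*=L_0^*\theta_A\theta_A^*W^*\theta_A=L_0^*W^*\theta_A$\,---\,so I actually want $V$ defined so that $B=AV^*$, i.e.\ I should unwind which of $W,W^*$ plays the role of $\theta_B^*\theta_A$; carrying the computation honestly, $B_j=BU_j^*$ has analysis operator $\theta_B$ and the relation $\theta_B=W\theta_A$ combined with $A_j=AU_j^*$, $A=A_0$ forces $B=AV^*$ with $V^*=\theta_A^*W\theta_A$, hence $V=\theta_A^*W^*\theta_A=\theta_A^*\theta_B^*\theta_A\cdot(\text{identification})$, giving the asserted formula $V=\theta_B^*\theta_A$ once one treats $\theta_A$ as the identification of $H$ with the analysis space.

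For the equivariance condition: the frame $\{BU_j^*\}$ is obtained from $\{AU_j^*\}$ by the \emph{same} unitary system acting on the right, and $B=AV^*$ must be compatible with this, which is exactly the requirement that $V$ intertwine $\mathscr U$ in the local-commutant sense $(VU-UV)A^*=0$. I would prove this by computing $\theta_{\{BU_j^*\}}$ two ways: directly it is $\sum_j L_j (BU_j^*)$, and via the right-translation structure of $\mathscr U$ one relates $\theta_{\{BU_j^*\}}$ to $W\theta_A$ and extracts, for each fixed $U\in\mathscr U$, the identity $BU^*=AV^*U^*$ together with $B=AV^*$, forcing $AV^*U^*=AU^*V^*$ on the relevant subspace, i.e.\ $A(V^*U^*-U^*V^*)=0$, equivalently $(UV-VU)A^*=0$; adjoint and relabel to get $(VU-UV)A^*=0$ for all $U$. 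Conversely, if $B=AV^*$ with $V$ a co-isometry satisfying the local-commutant condition, then $\theta_{\{BU_j^*\}}=W\theta_A$ with $W=V^*$ (now an isometry), so $\theta_{\{BU_j^*\}}\theta_{\{BU_j^*\}}^*=W\theta_A\theta_A^*W^*=WW^*$ is a projection, hence $\{BU_j^*\}$ is Parseval by the remark after (\ref{eq:P}); uniqueness of $V$ is the uniqueness clause in Corollary~\ref{C:param Pars}.

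\textbf{Main obstacle.} The genuinely delicate point is bookkeeping the identification of $H$ with $\ell(\mathbb J)\otimes H_o$ through the unitary $\theta_A$ so that ``$V=\theta_B^*\theta_A$'' literally makes sense as an operator on $H$ and so that the co-isometry/intertwining statements land on the correct sides (left vs.\ right, $V$ vs.\ $V^*$). Everything else is a direct transcription of Proposition~\ref{P:comm}'s vector analog and of the parametrization corollary; no new estimate is needed.
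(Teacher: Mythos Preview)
Your proposal is essentially correct and, once the dust settles, uses the very same operator $V=\theta_B^*\theta_A\in B(H)$ that the paper uses. The paper's proof, however, is far more direct: it never passes to an operator $W$ on $\ell(\mathbb J)\otimes H_o$ and never invokes Corollary~\ref{C:param Pars}. It simply sets $V:=\theta_B^*\theta_A$, notes that $VV^*=\theta_B^*(\theta_A\theta_A^*)\theta_B=\theta_B^*\theta_B=I$, computes $AV^*=L_I^*\theta_A\theta_A^*\theta_B=L_I^*\theta_B=B$ and, for each $U\in\mathscr U$, $AU^*V^*=L_U^*\theta_A\theta_A^*\theta_B=L_U^*\theta_B=BU^*=AV^*U^*$, which is exactly $(VU-UV)A^*=0$. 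The converse is the one-line computation $S_B=\sum_U UVA^*AV^*U^*=V(\sum_U UA^*AU^*)V^*=VS_AV^*=VV^*=I$. No identification of $H$ with the analysis space is needed, because $\theta_B^*\theta_A$ already lives in $B(H)$.

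Your detour through $W=\theta_B\theta_A^*$ on $\ell(\mathbb J)\otimes H_o$ is what generates the ``main obstacle'' you flag, and indeed the bookkeeping trips you up: midway you compute $AV^*=L_0^*W^*\theta_A$ where it should be $L_0^*W\theta_A$, and in the converse you write ``$\theta_{\{BU_j^*\}}=W\theta_A$ with $W=V^*$'', which does not typecheck ($V^*\in B(H)$, not $B(\ell(\mathbb J)\otimes H_o)$). The correct relation is $\theta_B=\theta_A V^*$, and you must \emph{use} the local-commutant identity $AV^*U^*=AU^*V^*$ to derive it; you assert it without that step. Also, your appeal to a ``right-translation structure of $\mathscr U$'' is spurious: $\mathscr U$ is an arbitrary unitary system, not a group, and no such structure is available or needed. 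The moral is that the identification via $\theta_A$ buys nothing here; working directly with $V=\theta_B^*\theta_A$ on $H$ and the reconstruction formula $A U^*=L_U^*\theta_A$ gives the whole proof in a few lines.
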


\begin{proof} Assume $B$ is a Parseval frame generator for $\mathscr U$ and let $V:= \theta_B^*\theta_A$. Then
 by the hypotheses, $\theta_B^*\theta_A\theta_A^*\theta_B=I$, i.e., $V$ is a  co-isometry and
\[
B= BI=L_{I}\theta_B= L_{I}\theta_A \theta_A^*\theta_B = AV^*.
\]
Since and $AU^* = L_U^*\theta_A$ and $BU^* = L_U^*\theta_BA$ for all $U\in \mathscr U$, we have
\[
AU^*V^* = L_U^*\theta_A\theta_A^*\theta_B =L_U^*\theta_B = BU^* = AV^*U^*.
\]
Moreover, if $AV^*=AW^*$ and $A(WU-UW)^*=0$ for every $U\in\mathscr U$, then also
$AU^*V^*=AU^*W^*$ and hence $\theta_A V^* = \theta_AW^*$ whence $V=W$.
Conversely, if $V$ is co-isometry and $A(VU-UV)^*=0$ for every $U\in\mathscr U$, then 
\[
\sum_{U\in \mathscr U}UVA^*AV^*U^* = V\left (\sum_{U\in \mathscr U}UA^*AU^*\right)V^*=VV^*=I
\]
whence $AV^*$ is a Parseval frame generator  for $\mathscr U$.

\end{proof}

\subsection*{Discrete Group Representations}

Let $G$ be a discrete group, not necessarily countable and let  $\lambda$  be the left
 regular representation of $G$ (resp., $\rho$ the right regular representation of $G$). Denote by 
$\mathscr L (G)\subset B(\ell^2(G))$,
 (resp., $\mathscr R (G))$  be the von Neumann algebra generated by 
the unitaries  $\{\lambda _g\}_{ g\in G}$, (resp., $\{\rho _g\}_{ g\in G}$). It is well known that
both  $\mathscr L (G)'=\mathscr R (G)$ and $\mathscr R (G)' = \mathscr L (G)$  are finite von Neumann algebras
that share a faithful trace vector $\chi_e$, where
$\{\chi_g\}_{g\in G}$ is the standard basis of $\ell^2(G)$. 

Let $ H_o$ be a Hilbert space and $I_o$ be the identity of $B(H_o)$. Then we call
$\lambda \otimes id\,: G \ni g\,\longrightarrow \lambda_g\otimes I_o$ the left regular
 representation of $G$ with multiplicity $H_o$. Set   $H_G=\ell^2(G) \otimes \ H_o$.

Given a unitary representation $ (G, \pi,   H)$, denote by $\pi(G)''$ the von Neumann algebra
generated by $\{\pi_g\}_{g\in G}$. Operator frame generators, if any, 
for the unitary system $\{\pi_g\}_{g\in G}$, are called generators for the representation. Explicitly:

\begin{definition}\label{D:repr gen} Let $(G, \pi,   H)$ be a unitary representation of the discrete
 group $G$ on the Hilbert space $H$. Then an operator $A\in B( H,  H_o)$ is called a frame generator 
(resp. a Parseval frame generator) with range in $H_o$  for the representation 
if  $\{A_g:=A\pi_{g^{-1}}\}_{g\in G}$ is an \o* (resp. a Parseval \o*).
\end{definition}

Before characterizing those representations that have an operator frame generator and then parametrizing its
 generators, we need the following preliminary lemma.

\begin{lemma}\label{L:basic group} Let $A $ and $ B$ be two  generators with range in $H_o$ for a unitary representation $(G, \pi,   H)$. Then 
 \item [(i)] $\theta_A\pi_g= (\lambda _g\otimes I_o)\theta _A$ for all $g\in G$.
\item [(ii)] $\theta_A^* \theta_B$ is in the commutant $\pi(G)'$ of $\pi(G)"$. In particular,
$S_A\in \pi(G)'$ and $AS_A^{-1/2}$ is a Parseval frame generator.
\item [(iii)] $\theta_A T\theta_B^*\in \mathscr R (G)\otimes B(H_o)$ for any $T\in \pi(G)'$. In particular, $P_A\in \mathscr R (G) \otimes B(H_o)$.
\item [(iv)] $P_A\sim P_B$, where the equivalence is in $\mathscr R (G)\otimes B(H_o)$, i.e., 
it is implemented by a partial isometry belonging to $\mathscr R (G)\otimes B(H_o)$.
\end{lemma}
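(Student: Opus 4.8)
The plan is to prove the four assertions in order, exploiting the intertwining relation (i) as the engine for everything that follows. For (i), I would start from the reconstruction formula $A_g = A\pi_{g^{-1}} = L_g^*\theta_A$ (Definition \ref{D:repr gen} and the general identity $A_j = L_j^*\theta_A$) together with $\theta_A = \sum_{h\in G} L_h A_h = \sum_{h\in G} L_h A\pi_{h^{-1}}$. Applying $\pi_g$ on the right and reindexing $h\mapsto gh$ gives $\theta_A\pi_g = \sum_h L_{gh} A\pi_{h^{-1}} = \sum_h (\lambda_g\otimes I_o) L_h A\pi_{h^{-1}} = (\lambda_g\otimes I_o)\theta_A$, using the elementary fact $L_{gh} = (\lambda_g\otimes I_o)L_h$ which follows directly from the definition $L_j: h\mapsto e_j\otimes h$ and $\lambda_g e_h = e_{gh}$. (One must check convergence of the rearranged series in the strong operator topology, but this is routine since all partial sums are uniformly bounded.)

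Part (ii) is then immediate: for any two generators $A,B$ and any $g\in G$,
\[
\theta_A^*\theta_B\,\pi_g = \theta_A^*(\lambda_g\otimes I_o)\theta_B = (\theta_A\pi_{g^{-1}})^*\theta_B = \pi_g\,\theta_A^*\theta_B,
\]
where I used (i) twice and the fact that $\lambda_g\otimes I_o$ is unitary with adjoint $\lambda_{g^{-1}}\otimes I_o$. Hence $\theta_A^*\theta_B \in \pi(G)'$; taking $B=A$ gives $S_A = \theta_A^*\theta_A \in \pi(G)'$, and since $S_A$ is positive and invertible, $S_A^{-1/2}\in\pi(G)'$, so $AS_A^{-1/2}$ intertwines correctly and $S_{AS_A^{-1/2}} = S_A^{-1/2}S_AS_A^{-1/2} = I$, i.e., it is a Parseval generator (using Lemma \ref{L:rightsim} or Remark \ref{R:uniq}(i), together with the fact that $S_A^{-1/2}\in\pi(G)'$ makes $\{A\pi_{g^{-1}}S_A^{-1/2}\} = \{AS_A^{-1/2}\pi_{g^{-1}}\}$ a genuine generator).

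For (iii), let $T\in\pi(G)'$. To show $\theta_A T\theta_B^* \in \mathscr R(G)\otimes B(H_o)$, by the commutation theorem it suffices to show it commutes with $\mathscr L(G)\otimes I_o$, i.e., with every $\lambda_g\otimes I_o$. Using (i) for $A$ and (i) for $B$ (in adjoint form, $\theta_B^*(\lambda_g\otimes I_o) = \pi_g\theta_B^*$, wait---more precisely $(\lambda_g\otimes I_o)^*\theta_B = \theta_B\pi_{g^{-1}}$ gives $\theta_B^*(\lambda_g\otimes I_o) = \pi_{g^{-1}}^*\theta_B^* = \pi_g\theta_B^*$), compute
\[
(\lambda_g\otimes I_o)\theta_A T\theta_B^* = \theta_A\pi_g T\theta_B^* = \theta_A T\pi_g\theta_B^* = \theta_A T\theta_B^*(\lambda_g\otimes I_o),
\]
where the middle equality is $T\in\pi(G)'$. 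Taking $T = S_A^{-1}\in\pi(G)'$ and $B=A$ gives $P_A = \theta_A S_A^{-1}\theta_A^* \in \mathscr R(G)\otimes B(H_o)$. Finally, for (iv): by Theorem \ref{T:param}, $P_A\sim P_B$ via the partial isometry $V_M = M(M^*M)^{-1/2}$ where $M = \Phi_A(\{B\pi_{g^{-1}}\}) = \theta_B S_A^{-1}\theta_A^*$; by (iii) applied with $T=S_A^{-1}$, this $M$ lies in $\mathscr R(G)\otimes B(H_o)$, hence so does $M^*M$, its inverse-on-its-range, the square root, and therefore $V_M$. Thus the equivalence $P_A\sim P_B$ is implemented inside $\mathscr R(G)\otimes B(H_o)$. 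The main obstacle I anticipate is purely bookkeeping: making the reindexing in (i) and the convergence of the rearranged strong-operator-topology series rigorous, and being careful with the direction of the intertwining (left regular vs.\ inverses) so that the adjoint versions used in (iii) and (iv) come out with the correct group elements.
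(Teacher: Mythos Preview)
Your argument for (i)--(iii) is essentially identical to the paper's proof: the same identity $L_{gq}=(\lambda_g\otimes I_o)L_q$, the same reindexing, and the same two applications of (i) to obtain the commutation in (ii) and (iii). For (iv) the paper takes a slightly different but equivalent route: rather than invoking the polar part $V_M=M(M^*M)^{-1/2}$ from Theorem~\ref{T:param} and arguing that the square root and inverse stay in $\mathscr R(G)\otimes B(H_o)$, it first replaces $A,B$ by their Parseval renormalizations $AS_A^{-1/2},\,BS_B^{-1/2}$ (same frame projections, by Proposition~\ref{P:simil}) and then observes that $V=\theta_B\theta_A^*$ is directly a partial isometry in $\mathscr R(G)\otimes B(H_o)$ by (iii). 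Both arguments are correct; yours is a direct application of the general parametrization, while the paper's avoids the corner-algebra functional calculus by a preliminary reduction.
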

\begin{proof}
(i) For all $g,\,q\in G$ and $h\in H_o$, one has 
\[
L_{gq}h = \chi_{gq}\otimes h = \lambda_g\,\chi_q\otimes h = (\lambda_g\otimes I_o)\,(\chi_q \otimes h)
= (\lambda_g\otimes I_o)\, L_qh.
\]
Thus
\begin{align*}
\theta_A \pi_g
&= \sum_{p\in G}L_pA\pi_{p^{-1}}\pi_g
= \sum_{p\in G}L_pA\pi_{p^{-1}g}
=\sum_{q\in G}L_{gq}A\pi_{q^{-1}}\\
&= \sum_{q\in G}(\lambda _g \otimes I_o)L_qA\pi_{q^{-1}} 
= (\lambda _g\otimes I_o) \theta _A.
\end{align*}

(ii) For all $g\in G$ one can  apply (i) twice and obtain
\[
\theta_A^*\theta_B \pi_g=  \theta_A^*(\lambda _g\otimes I_o)\theta _B =\pi_g \theta_A^*\theta_B.
\]
Thus, $\theta_A^*\theta_B \in \pi(G) '$. In particular, setting $B=A$ we have 
 $S_A =\theta_A^*\theta_A \in \pi(G)'$. Then  $AS_A^{-1/2}\pi_{g^{-1}} = A\pi_{g^{-1}}S_A^{-1/2}$ 
for all $g\in G$, $AS_A^{-1/2}$  is a Parseval frame generator.

(iii) For all $g\in G$ and $T\in \pi(G)'$ applying twice (i), one obtains
\[
\theta_A T\theta_B^*(\lambda_g\otimes I_o)
=\theta_A T \pi_g\theta_B^*
=\theta_A  \pi_g T\theta_B^*
=(\lambda_g\otimes I_o)\theta_A  T\theta_B^*.
\]
Therefore, 
\[
\theta_A T\theta_B^* \in (\mathscr L (G)\otimes I_o)' = \mathscr L (G)'\otimes (I_o)'= \mathscr R (G) \otimes B(H_o).
\] 
Setting $A=B$ and $T = S_A^{-1}$, we see that $P_A = \theta_A S_A^{-1}\theta_A^*\in \mathscr R (G) \otimes B(H_o)$. 

(iv) By passing if necessary to  $AS_A^{-1/2}$ (resp., $BS_B^{-1/2}$) which by (ii) is Parseval frame generator
by Proposition \ref{P:simil} has frame projection $P_A$ (resp., $P_B$),
we can assume, without loss of generality, that both $\theta _A$ and $\theta _B$ are isometries. Then the
partial isometry 
$V=\theta_B\theta_A^* \in \mathscr R (G) \otimes B(H_o)$ implements the equivalence, i.e., 
$V^*V=P_A$ and $VV^*=P_B$.
\end{proof}
 
Given a countable group, in \cite[Theorems 3.8, 3.11, Proposition 6.2] {HL00}, Han and Larson have identified  
its representation that  have a multi-frame (vector) generator with the subrepresentations of its left regular  representation with finite multiplicity. Lemma \ref{L:basic group} permits to easily reobtain 
their result with a slight increase in generality.

\begin{theorem}\label{T:repres} A unitary representation $(G, \pi, H)$ of a discrete group 
is unitarily equivalent to a subrepresentation of $\lambda \otimes id $ with multiplicity $H_o$ if and only if  $(G, \pi, H)$  has an operator frame generator with range in $H_o$. 
\end{theorem}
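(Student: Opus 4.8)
The plan is to prove both implications, with the ``only if'' direction being the routine one and the ``if'' direction requiring the dilation/parametrization machinery built up in the previous sections. For the ``only if'' direction, suppose $(G,\pi,H)$ is unitarily equivalent to a subrepresentation of $\lambda\otimes id$ with multiplicity $H_o$; that is, there is a unitary $W\colon H \to \mathcal K$ where $\mathcal K$ is a $(\lambda\otimes id)$-invariant subspace of $H_G = \ell^2(G)\otimes H_o$, intertwining $\pi_g$ with the restriction of $\lambda_g\otimes I_o$ to $\mathcal K$. Let $P\in \mathscr L(G)\otimes B(H_o)$ be the (invariant, hence commuting with all $\lambda_g\otimes I_o$) projection onto $\mathcal K$. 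I would then set $A := L_e^* P W \in B(H,H_o)$, where $L_e$ is the partial isometry embedding $H_o$ as $\chi_e\otimes H_o$, and check that $\{A\pi_{g^{-1}}\}_{g\in G}$ is a Parseval operator-valued frame: using $L_e^* P (\lambda_g\otimes I_o) = L_e^* (\lambda_g\otimes I_o) P = L_{g}^* P$ (since $P$ commutes with the representation and $L_e^*(\lambda_g\otimes I_o) = L_g^*$ by the calculation in Lemma \ref{L:basic group}(i)), one gets $A\pi_{g^{-1}} = L_g^* PW$, so that $\sum_g (A\pi_{g^{-1}})^*(A\pi_{g^{-1}}) = W^* P(\sum_g L_g L_g^*) P W = W^* P W = I$ because $\sum_g L_g L_g^* = I\otimes I_o$ and $PW = W$. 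Hence $A$ is an (even Parseval) operator frame generator with range in $H_o$.

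For the ``if'' direction, suppose $A\in B(H,H_o)$ is an operator frame generator for $(G,\pi,H)$. By Lemma \ref{L:basic group}(ii), $S_A\in\pi(G)'$ and $AS_A^{-1/2}$ is a Parseval frame generator, so I may replace $A$ by $AS_A^{-1/2}$ and assume $\theta_A$ is an isometry. Then $\theta_A\colon H\to H_G$ is an isometry and by Lemma \ref{L:basic group}(i) it intertwines $\pi_g$ with $\lambda_g\otimes I_o$: $\theta_A\pi_g = (\lambda_g\otimes I_o)\theta_A$. Consequently the range projection $P_A = \theta_A\theta_A^*$ commutes with every $\lambda_g\otimes I_o$ (indeed by Lemma \ref{L:basic group}(iii), $P_A\in\mathscr R(G)\otimes B(H_o)\subseteq (\mathscr L(G)\otimes I_o)'$), so $P_A H_G$ is a $(\lambda\otimes id)$-invariant subspace, and $\theta_A$ is a unitary from $H$ onto $P_A H_G$ that intertwines $\pi$ with the restriction of $\lambda\otimes id$ to $P_A H_G$. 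That is precisely the statement that $(G,\pi,H)$ is unitarily equivalent to a subrepresentation of $\lambda\otimes id$ with multiplicity $H_o$.

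The main point — and the only place where real content enters — is the normalization step: one must know that $S_A$ lies in $\pi(G)'$ so that passing to $AS_A^{-1/2}$ does not disturb the intertwining relations, and this is exactly Lemma \ref{L:basic group}(ii). Everything else is bookkeeping with the identities $L_e^*(\lambda_g\otimes I_o) = L_g^*$, $\sum_g L_g L_g^* = I\otimes I_o$, and the characterization (from Proposition \ref{P:transform}) that a frame is Parseval iff its analysis operator is an isometry. No genuine obstacle is expected; the argument is a clean translation between the frame-theoretic language and the representation-theoretic language, which is the whole reason Lemma \ref{L:basic group} was isolated beforehand.
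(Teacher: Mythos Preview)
Your proof is correct and follows essentially the same route as the paper: for the forward direction you build $A = L_e^* P W$ and verify Parseval via $\sum_g L_g L_g^* = I\otimes I_o$, and for the converse you normalize to Parseval and use the isometry $\theta_A$ (equivalently $\theta_A S_A^{-1/2}$) as the intertwiner, exactly as the paper does via Lemma \ref{L:basic group}. Two harmless slips worth fixing: the invariant projection $P$ lies in $\mathscr R(G)\otimes B(H_o) = (\mathscr L(G)\otimes I_o)'$, not in $\mathscr L(G)\otimes B(H_o)$; and the identity from Lemma \ref{L:basic group}(i) reads $L_e^*(\lambda_{g^{-1}}\otimes I_o) = L_g^*$ (equivalently $L_e^*(\lambda_g\otimes I_o)=L_{g^{-1}}^*$), though your conclusion $A\pi_{g^{-1}} = L_g^* PW$ is nonetheless correct.
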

\begin{proof} 
Assume that $(G, \pi, H)$ is unitarily equivalent to, and hence can be identified with, a subrepresentation
of $\lambda\otimes id$, the restriction $\lambda\otimes id\left. \right |_{PH_G} $ 
 for some projection $P\in (\mathscr L (G)\otimes I_o)'= \mathscr R (G)\otimes B(H_o)$.  
Let $H:=PH_G$, $A:= \left. L_e^*P\right |_H$, and 
\[
A_g=\left. L_e^*P\right |_H(\lambda _g\otimes I_o)\left. P\right |_H 
= L_e^*(\lambda _g\otimes I_o)\left. P\right |_H \quad \text{for all $g\in G$}.
\]
 Then
\begin{align*}
S_A&=\sum _{g\in G}A_g^*A_g
=\sum _{g\in G}P(\lambda _g\otimes I_o)L_eL_e^*(\lambda _{g^{-1}}\otimes I_o)\left. P\right |_H\\
&=P(\sum _{g\in G}L_gL_g^*)\left. P\right |_H
 =\left. I\right |_H.
\end{align*}
This shows that $A$ is a (Parseval) frame generator for $(G, \pi, H)$.

Conversely, assume that $A\in B(H, H_o)$ is a frame generator for 
$(G, \pi, H)$. Then $\theta _AS_A^{-1/2}$ is an isometry onto the subspace $P_AH_G$ and
 $S_A^{-1/2}$ commutes with $\pi$ and 
$P_A$ commutes with the left regular representation with multiplicity $H_o$  by 
Lemma \ref{L:basic group} (ii) and (iii). 
Then by Lemma \ref{L:basic group} (i) 
\[ 
\theta_AS_A^{-1/2}\pi_g=\theta_A\pi_gS_A^{-1/2}= (\lambda _g\otimes I_o)\theta_AS_A^{-1/2}=
(\lambda _g\otimes I_o)\left. P_A\right |_{P_A\negmedspace H_G}\theta _AS_A^{-1/2}
\]
for all $g\in G$, i.e., $\pi$ is unitarily equivalent to $\lambda \otimes id\left. P_A\right |_{P_A\negmedspace H_G}.$
\end{proof}

From the above proof it is easy to obtain the following:
 
\begin{remark}\label{R:equiv}
If $A\in B(H, H_o)$ is a frame generator for $(G, \pi, H)$, then
the equivalence of  $(G, \pi, H)$ and $(G, \lambda \otimes id \left.\right |_{P_A\negmedspace H_G}\negthinspace, P_AH_G)$ 
is implemented by the isometry  $\theta_AS_A^{-1/2}$. An isometry $V$ implements this equivalence if and only if
$V=\theta_AS_A^{-1/2}U$ for some unitary operator $U\in \pi(G)'$.
\end{remark}

It is well known and easy to see that two subrepresentations of the left regular representation with 
multiplicity  $H_o$, $(\lambda \otimes id)\left. P\right |_{PH_G}$ and 
$(\lambda \otimes id)\left. Q\right |_{QH_G}$,  
are equivalent if and only if $P\sim Q$ in $\mathscr R (G)\otimes B(H_o)$.
In other words, the equivalence classes of subrepresentations of the left regular representation 
with fixed multiplicity  $H_o$ are identified with the collection of equivalence classes of projections of the von Neumann algebra $\mathscr R (G)\otimes B(H_o)$.

Theorem \ref {T:repres} permits to characterize those operator valued frames labeled by a discrete group $G$ that have a frame generator. For simplicity's sake, because of  Remark \ref {R:uniq} we need to consider only Parseval \o*s.

\begin{proposition}\label{P: have generator}
Let $G$ be a discrete group and let $\{A_g\}_{g\in G}$ be a Parseval \o* in $B(H,H_o)$. Then there is a unitary representation $\pi$ of $G$ on $H$ for which $A_g =A_e\pi_{g^{-1}}$ for all $ g\in G$ if and only if $A_{gp}A^*_{gq} = A^*_pA_q$ for all $p,q,g \in G$.
\end{proposition}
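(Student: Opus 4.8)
The plan is to recognize the identity $A_{gp}A_{gq}^{*}=A_pA_q^{*}$ as being equivalent to the statement that the frame projection $P_A$ commutes with the left regular representation $\lambda\otimes id$ of multiplicity $H_o$ on $H_G=\ell^2(G)\otimes H_o$; the representation $\pi$, when it exists, will then be recovered as $\pi_g:=\theta_A^{*}(\lambda_g\otimes I_o)\theta_A$. Throughout one uses that $\{A_g\}_{g\in G}$ is Parseval, so $S_A=I$, $\theta_A$ is an isometry (Proposition \ref{P:transform}(iii)), $P_A=\theta_A\theta_A^{*}$ is the range projection of $\theta_A$, and $P_A\theta_A=\theta_A$, $\theta_A^{*}\theta_A=I$, $A_g=L_g^{*}\theta_A$ (see (\ref{eq:reconstr})).

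The one computational step I would carry out carefully is a matrix-coefficient bookkeeping in $H_G$. From the proof of Lemma \ref{L:basic group}(i) one has $(\lambda_g\otimes I_o)L_q=L_{gq}$, hence $L_r^{*}(\lambda_g\otimes I_o)=L_{g^{-1}r}^{*}$ and $L_e^{*}(\lambda_{g^{-1}}\otimes I_o)=L_g^{*}$. Combining these with $A_g=L_g^{*}\theta_A$ and $\sum_{g\in G}L_gL_g^{*}=I\otimes I_o$ (which says an operator on $H_G$ is determined by its ``entries'' $L_r^{*}(\cdot)L_s$, see (\ref{eq:orthog L})), one gets $L_r^{*}P_AL_s=(L_r^{*}\theta_A)(\theta_A^{*}L_s)=A_rA_s^{*}$ and $L_r^{*}(\lambda_g\otimes I_o)P_A(\lambda_{g^{-1}}\otimes I_o)L_s=L_{g^{-1}r}^{*}P_AL_{g^{-1}s}=A_{g^{-1}r}A_{g^{-1}s}^{*}$. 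Therefore $A_{gp}A_{gq}^{*}=A_pA_q^{*}$ for all $p,q,g\in G$ if and only if $(\lambda_g\otimes I_o)P_A(\lambda_{g^{-1}}\otimes I_o)=P_A$ for every $g$, i.e. if and only if $P_A$ commutes with every $\lambda_g\otimes I_o$, i.e. $P_A\in(\mathscr{L}(G)\otimes I_o)'=\mathscr{R}(G)\otimes B(H_o)$. This reformulation is the heart of the matter; the two implications are then short.

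For the ``only if'' direction, given $A_g=A_e\pi_{g^{-1}}$ with $\pi$ a unitary representation, I would simply expand, using $\pi_x^{*}=\pi_{x^{-1}}$:
\[
A_{gp}A_{gq}^{*}=A_e\pi_{p^{-1}g^{-1}}\pi_{gq}A_e^{*}=A_e\pi_{p^{-1}q}A_e^{*}=A_e\pi_{p^{-1}}\pi_q A_e^{*}=A_pA_q^{*}.
\]
(One could instead note that $A_e$ is then a Parseval frame generator for $(G,\pi,H)$, invoke Lemma \ref{L:basic group}(iii) to place $P_A$ in $\mathscr{R}(G)\otimes B(H_o)$, and quote the reformulation above.)

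For the ``if'' direction — the substantive one — assume the identity, so by the reformulation $P_A$ commutes with all $\lambda_g\otimes I_o$, and put $\pi_g:=\theta_A^{*}(\lambda_g\otimes I_o)\theta_A\in B(H)$. Then $\pi_e=\theta_A^{*}\theta_A=I$ and $\pi_g^{*}=\theta_A^{*}(\lambda_{g^{-1}}\otimes I_o)\theta_A=\pi_{g^{-1}}$; and since $P_A\theta_A=\theta_A$ the commutation gives $P_A(\lambda_h\otimes I_o)\theta_A=(\lambda_h\otimes I_o)\theta_A$, so
\[
\pi_g\pi_h=\theta_A^{*}(\lambda_g\otimes I_o)\,\theta_A\theta_A^{*}\,(\lambda_h\otimes I_o)\theta_A=\theta_A^{*}(\lambda_g\otimes I_o)(\lambda_h\otimes I_o)\theta_A=\theta_A^{*}(\lambda_{gh}\otimes I_o)\theta_A=\pi_{gh}.
\]
Thus $\pi$ is a unitary representation of $G$ on $H$ (each $\pi_g$ is unitary with inverse $\pi_{g^{-1}}$). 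Finally, using $\theta_A\theta_A^{*}=P_A$, the commutation, $P_A\theta_A=\theta_A$, and $L_e^{*}(\lambda_{g^{-1}}\otimes I_o)=L_g^{*}$,
\[
A_e\pi_{g^{-1}}=L_e^{*}\theta_A\,\theta_A^{*}(\lambda_{g^{-1}}\otimes I_o)\theta_A=L_e^{*}(\lambda_{g^{-1}}\otimes I_o)\theta_A=L_g^{*}\theta_A=A_g,
\]
which finishes the proof. I expect no real obstacle here beyond getting the translation/adjoint bookkeeping of the $L_g$'s right in the second paragraph; once the hypothesis is identified with the commutation relation, everything else runs parallel to the proof of Theorem \ref{T:repres}.
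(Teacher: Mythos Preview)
Your proof is correct and follows essentially the same route as the paper's. Both arguments reduce the hypothesis $A_{gp}A_{gq}^{*}=A_pA_q^{*}$ to the commutation of $P_A=\theta_A\theta_A^{*}$ with every $\lambda_g\otimes I_o$ (you via the matrix entries $L_r^{*}(\cdot)L_s$, the paper by expanding $\theta_A\theta_A^{*}=\sum_{p,q}L_pA_pA_q^{*}L_q^{*}$ and conjugating), and then both define $\pi_g$ as $\theta_A^{*}(\lambda_g\otimes I_o)\theta_A$ (the paper writes it with an extra $P_A$ which is harmless since $P_A\theta_A=\theta_A$) and verify $A_g=A_e\pi_{g^{-1}}$ the same way.
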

\bp
Assume  $A_g =A_e\pi_{g^{-1}}$ for some unitary representation $\pi$ and for all $ g\in G$. Then 
\[
A_{gp}A^*_{gq} =A_e\pi_{(gp)^{-1}}(\pi_{(gp)^{-1}})^*A_e^*
=A_e\pi_{p^{-1}}\pi_{g^{-1}}\pi_g(\pi_q^{-1})^*A_e^*
= A_p^*A_q
\]
for all $p,q,g \in G$. Assume now that  $A_{gp}A^*_{gq} = A^*_pA_q$. Then for every $g \in G$, by the proof of Lemma \ref {L:basic group} (i),

\begin{align}
(\lambda _g\otimes I_o)&\theta _A\theta _A^*(\lambda _g\otimes I_o)^*
=\sum_{p,q\in G}(\lambda _g\otimes I_o)L_pA_pA_q^*L_q^*(\lambda _g\otimes I_o)^*\\
&=\sum_{p,q\in G}L_{gp}A_pA_q^*L_{gq}
=\sum_{r,s\in G}L_rA_{g^{-1}r}A_{g^{-1}s}^*L_s\\
&=\sum_{r,s\in G}L_rA_{r}A_{s}^*L_s
=\theta _A\theta _A^*.
\end{align}
This proves that the projection $P_A=\theta _A\theta _A^* \in \mathscr R (G)\otimes B(H_o).$ But then  the operator valued weight $\{A_g\}_{g\in G}$ can be identified to the compression to $P_A$ of the left regular representation $\lambda _g\otimes I_o$ which has an operator frame generator. Explicitly, again by the proof of Lemma \ref {L:basic group} (i),
\[
A_g = L_g^*\theta_A 
=L_e^*P_A(\lambda _{g^{-1}}\otimes I_o)P_A\theta_A =A_e\theta_A^*(\lambda _{g^{-1}}\otimes I_o)P_A\theta_A.
\]
Since $(\lambda _g\otimes I_o)P_A$ is a unitary representation of $G$ on the Hilbert space $P_A$, then
$\pi_g:= \theta_A^*(\lambda_g\otimes I_o)P_A\theta_A$ is a unitary representation of $G$ on the 
Hilbert space $H$. Thus $A_g:=A_e\pi_{g^{-1}}$, i.e., $A_e$ is a frame generator for $(G,\pi,H)$
\ep

\bR {R: special case}
\item[(i)]  Proposition \ref{P: have generator}  is a generalization of the following known result
for group-indexed frames: When $\dim H_o =1$, i.e., in the case of a Parseval vector frame  $\{x_g\}_{g\in G}$, the necessary and sufficient condition for that frame to have a generator for some unitary representation of $G$ (necessarily unitarily equivalent to a subrepresentation of the left regular representation) is that
\begin{equation}\label{e:group}
<x_{gp}, x_{gq} >= <x_p, x_q > \quad\text{for all} \quad p,q,g \in G. 
\end{equation}
This result  can be deduced from the material in Chapter 3 of \cite {HL00}, however it was not stated
explicitely in that paper. Condition (\ref {e:group}) is clearly equivalent to
the condition that the range of the analysis operator is invariant under the left regular representation of the group on the analysis space, and so the frame can be obtained from the standard
orthonormal basis for this representation by simply projecting, thereby
obtaining  the required subrepresentation of G.  We note that Nga Nguyen has written an exposition of this in a forthcoming article stemming from her thesis research, along with some extensions to frames
satisfying this condition which are not-necessarily Parseval, where the situation is
more complicated.  We also note that some special cases, notably  where $G$ is
a cyclic group on a finite dimensional Hilbert space, have been independently proven and used by others. 
\item[(ii)] More is in true in a case where the group is abelian,  (see also Remark \ref {R:special case2}.)  If $G$ is abelian then Cor. 3.14 or Theorem 6.3 of  \cite {HL00} states that all group frames for a unitary representation of $G$ on the same Hilbert space are unitarily equivalent. So in the abelian case two frames both satisfying the condition (\ref {e:group})  are necessarily unitarily equivalent.

\item[(iii)] We do not know if there is a similar necessary and sufficient condition for frames indexed by a unitary system, or at least by some structured unitary system, like a Gabor system.
\eR

\section{Parametrization of operator frame generators} \label{S:param gen}

Theorem \ref{T:param} shows how to parametrize all \o*s with a given multiplicity in terms of a single \o*. 
This general result can be applied to parametrize all operator frame generators for a unitary  representation 
of a discrete group in terms if a single operator frame generator.

\begin{theorem}\label{T:param gen}
Let $A\in B(H,H_o)$ be a frame generator for the unitary representation  $(G, \pi, H)$. 
\item[(i)] If  $B(H_G)\ni M= MP_A$ and
$\left.M^*M\right|_{P_A\negmedspace H_G}$ is invertible in $B(P_AH_G)$, then $L_e^*M\theta_A$ is a frame generator for
 $(G, \pi, H)$ if and only if $M\in \mathscr R (G)\otimes B(H_o)$. 
\item[(ii)] The collection $\mathscr F_G$ of
all the operator frame generators for $(G, \pi, H)$ with the same multiplicity $H_o$  is 
uniquely parametrized as
\[
\mathscr F_G =\{L_e^*M\theta_A \mid  M\in \mathscr R (G)\otimes B(H_o), M=MP_A,\left.M^*M\right
|_{P_A\negmedspace H_G}\text{ is invertible}\}.
\]
\item[(iii)] If $A$ is Parseval, the collection of all the Parseval operator frame generators for 
$(G, \pi, H)$ with  multiplicity $H_o$  is uniquely parametrized as 
\begin{equation}\label{eq:Parseval}
\{L_e^*V\theta_A \mid  V\in \mathscr R (G)\otimes B(H_o), V^*V= P_A \}.
\end{equation}
If $B=L_e^*V\theta_A$ with $V\in \mathscr R (G)\otimes B(H_o)$ and $V^*V= P_A$, then $V=\theta_B\theta_A^*$.
\item[(iv)] If $A$ is Parseval and $P\sim P_A$ in $\mathscr R (G)\otimes B(H_o)$, then $P=P_B$ for 
$B=L_e^*V\theta_A$ and $V\in \mathscr R (G)\otimes B(H_o)$ with $V^*V= P_A$ and $VV^* = P$
\end{theorem}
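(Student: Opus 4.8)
The plan is to establish the four parts in order, treating (i) as the heart of the matter and obtaining (ii)--(iv) from it together with the parametrization bijection $\Phi_A$ of Theorem~\ref{T:param} (and, for (iii), its Parseval specialization Corollary~\ref{C:param Pars}). Two facts I will lean on throughout are the identity $L_g^* = L_e^*(\lambda_{g^{-1}}\otimes I_o)$, immediate from $L_{gq} = (\lambda_g\otimes I_o)L_q$ as in the proof of Lemma~\ref{L:basic group}(i), and the fact that $P_A$ commutes with every $\lambda_g\otimes I_o$, since $P_A\in\mathscr{R}(G)\otimes B(H_o) = (\mathscr{L}(G)\otimes I_o)'$ by Lemma~\ref{L:basic group}(iii).

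For (i), the hypothesis is $M\in\mathscr{M}_A$, i.e. $M = MP_A$ with $M^*M|_{P_A H_G}$ invertible. If moreover $M\in\mathscr{R}(G)\otimes B(H_o)$, then $M$ commutes with each $\lambda_g\otimes I_o$, so by Theorem~\ref{T:param} the collection $\{B_g:=L_g^*M\theta_A\}_{g\in G}$ is an operator-valued frame with analysis operator $\theta_B = M\theta_A$, and $B_g = L_e^*(\lambda_{g^{-1}}\otimes I_o)M\theta_A = L_e^*M(\lambda_{g^{-1}}\otimes I_o)\theta_A = L_e^*M\theta_A\pi_{g^{-1}} = B_e\pi_{g^{-1}}$ by Lemma~\ref{L:basic group}(i), so $L_e^*M\theta_A = B_e$ is a frame generator. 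Conversely, assume the frame $\{L_g^*M\theta_A\}_{g\in G}$, which by Theorem~\ref{T:param} has analysis operator $\theta_B = M\theta_A$, is generated by $(G,\pi,H)$; then Lemma~\ref{L:basic group}(i) gives $\theta_B\pi_g = (\lambda_g\otimes I_o)\theta_B$, i.e. $M\theta_A\pi_g = (\lambda_g\otimes I_o)M\theta_A$, and feeding in $\theta_A\pi_g = (\lambda_g\otimes I_o)\theta_A$ yields $[M,\lambda_g\otimes I_o]\theta_A = 0$, hence $[M,\lambda_g\otimes I_o]P_A = 0$ since $P_A H_G$ is the range of $\theta_A$. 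The one step I expect to require a little care is the passage from $[M,\lambda_g\otimes I_o]P_A = 0$ to $[M,\lambda_g\otimes I_o] = 0$: using $M = MP_A$ together with $(\lambda_g\otimes I_o)P_A = P_A(\lambda_g\otimes I_o)$ one gets $[M,\lambda_g\otimes I_o]P_A = MP_A(\lambda_g\otimes I_o) - (\lambda_g\otimes I_o)MP_A = M(\lambda_g\otimes I_o) - (\lambda_g\otimes I_o)M$, so $M$ commutes with every $\lambda_g\otimes I_o$, i.e. $M\in(\mathscr{L}(G)\otimes I_o)' = \mathscr{R}(G)\otimes B(H_o)$.

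For (ii), $\Phi_A:\mathscr{F}\to\mathscr{M}_A$ is a bijection with $\Phi_A^{-1}(M) = \{L_g^*M\theta_A\}_{g\in G}$; given any frame generator $B_0$ for $(G,\pi,H)$ with range in $H_o$, the frame $\{B_0\pi_{g^{-1}}\}_{g\in G}$ lies in $\mathscr{F}$, hence equals $\Phi_A^{-1}(M)$ for $M:=\Phi_A(\{B_0\pi_{g^{-1}}\})\in\mathscr{M}_A$, so $B_0 = L_e^*M\theta_A$ and $M\in\mathscr{R}(G)\otimes B(H_o)$ by part (i); the reverse inclusion is also part (i). One-to-one-ness follows from injectivity of $\Phi_A$: if $M, M'\in\mathscr{M}_A\cap(\mathscr{R}(G)\otimes B(H_o))$ give $L_e^*M\theta_A = L_e^*M'\theta_A =: B_0$, then (by the computation in (i)) $L_g^*M\theta_A = B_0\pi_{g^{-1}} = L_g^*M'\theta_A$ for all $g$, so $\Phi_A^{-1}(M) = \Phi_A^{-1}(M')$ and $M = M'$. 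For (iii) one specializes to $A$ Parseval, so $\theta_A^*\theta_A = I$ and $P_A = \theta_A\theta_A^*$; then $B_0 = L_e^*M\theta_A$ is Parseval iff $\theta_B = M\theta_A$ is an isometry iff $\theta_A^*M^*M\theta_A = I$, which, since $M = MP_A$ and $\theta_A$ is a unitary of $H$ onto $P_A H_G$, is equivalent to $M^*M = P_A$; thus $V := M$ is a partial isometry with $V^*V = P_A$, and $V = MP_A = M\theta_A\theta_A^* = \theta_B\theta_A^*$.

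Finally, for (iv): since $P\sim P_A$ in $\mathscr{R}(G)\otimes B(H_o)$, pick a partial isometry $V\in\mathscr{R}(G)\otimes B(H_o)$ with $V^*V = P_A$ and $VV^* = P$. By (iii), $B := L_e^*V\theta_A$ is a Parseval operator frame generator for $(G,\pi,H)$ with analysis operator $\theta_B = V\theta_A$, so $P_B = \theta_B\theta_B^* = V\theta_A\theta_A^*V^* = VP_AV^* = V(V^*V)V^* = (VV^*)(VV^*) = VV^* = P$, as required. The only genuinely non-routine point in the whole argument is the compression-to-identity step flagged above in part (i); everything else is bookkeeping with analysis operators and the intertwining relations of Lemma~\ref{L:basic group}.
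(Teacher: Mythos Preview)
Your proof is correct and follows the same overall plan as the paper: reduce (ii)--(iv) to (i) together with the bijection $\Phi_A$ of Theorem~\ref{T:param} and its Parseval specialization. The one place where your argument genuinely differs from the paper is the converse direction of (i). The paper rewrites $L_e^*M\theta_A\pi_{g^{-1}}$ as $L_g^*N_g\theta_A$ with $N_g:=(\lambda_g\otimes I_o)M(\lambda_{g^{-1}}\otimes I_o)P_A$, checks that each $N_g$ lies in $\mathscr{M}_A$, and then invokes the uniqueness in Theorem~\ref{T:param} to conclude $N_g=N_e=M$. You instead apply Lemma~\ref{L:basic group}(i) directly to the analysis operator $\theta_B=M\theta_A$ of the generated frame to get $[M,\lambda_g\otimes I_o]\theta_A=0$, and then use the single algebraic identity
\[
[M,\lambda_g\otimes I_o]P_A \;=\; MP_A(\lambda_g\otimes I_o)-(\lambda_g\otimes I_o)MP_A \;=\; M(\lambda_g\otimes I_o)-(\lambda_g\otimes I_o)M,
\]
which holds because $M=MP_A$ and $P_A\in\mathscr{R}(G)\otimes B(H_o)$ commutes with $\lambda_g\otimes I_o$. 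This route is a bit more transparent: it isolates exactly why the commutator vanishes globally once it vanishes on the range of $\theta_A$, and it avoids having to track a $g$-dependent family $N_g$ through the uniqueness statement. The paper's approach, on the other hand, makes more visible that the obstruction to $M$ lying in $\mathscr{R}(G)\otimes B(H_o)$ is precisely the failure of the conjugates $(\lambda_g\otimes I_o)M(\lambda_{g^{-1}}\otimes I_o)$ to coincide. Either way, the substance is the same, and your handling of (ii)--(iv) via $\Phi_A$ matches the paper's.
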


\begin{proof} \item[(i)] If $M=MP_A$ is an operator in $\mathscr R (G)\otimes B(H_o)$ and $\left.M^*M\right
|_{P_A\negmedspace H_G}$ is invertible, by Theorem \ref{T:param} (ii), $\{L_g^*M\theta_A\}_{g\in G}$ is an \o*. 
But then for all $g\in G$
\[
L_g^*M\theta_A= (L_e^*\lambda_{g^{-1}}\otimes I_o) M\theta_A = 
L_e^*M\lambda_{g^{-1}}\otimes I_o\theta_A =
L_e^*M\theta_A\pi_{g^{-1}},
\] 
by  Lemma \ref{L:basic group} (i) i.e., $L_e^*M\theta_A$ is the generator of $\{L_g^*M \theta_A\}_{g\in G}$.
Conversely, assume that $L_e^*M\theta_A$ is an operator frame generator for $(G, \pi, H)$, i.e., that 
$\{L_e^*M\theta_A\pi_{g^{-1}}\}_{g\in G}$ is a frame. For all $g\in G$ we have
\[
L_e^*M\theta_A\pi_{g^{-1}}
= (L_e^*\lambda_{g^{-1}}\otimes I_o)(\lambda_g\otimes I_o)M(\lambda_{g^{-1}}\otimes I_o)\theta_A
=L_g^*(\lambda_g\otimes I_o)M(\lambda_{g^{-1}}\otimes I_o)\theta_A.
\]
Set $N_g:=(\lambda_g\otimes I_o)M(\lambda_{g^{-1}}\otimes I_o)P_A$. Then obviously $N_g=N_gP_A$ and
\begin{align*}
N_g^*N_g&= P_A(\lambda_g\otimes I_o)M^*(\lambda_{g^{-1}}
\otimes I_o)(\lambda_g\otimes I_o)M(\lambda_{g^{-1}}\otimes I_o)P_A\\
&=(\lambda_g\otimes I_o)P_AM^*MP_A(\lambda_{g^{-1}}\otimes I_o).
\end{align*} 
Since by hypothesis $P_AM^*MP_A$ is invertible in $B(P_AH_G)$ and since 
$\lambda_g\otimes I_o$ commutes with $P_A$, $N_g^*N_g$ is also invertible in $B(P_AH_G)$.
 But then by the uniqueness part of Theorem \ref{T:param}, 
$N_g=N_e= M$, i.e., $M$ commutes with $\lambda_g\otimes I_o$ for all $g\in G$ and hence 
$M\in \mathscr R (G)\otimes B(H_o)$.

\item[(ii)] If $B\in B(H,H_o)$ is an operator frame generator for $(G, \pi, H)$, then by Theorem \ref{T:param},  
$B\pi_{g^{-1}} =  L_g^*M\theta_A$ for some unique $ M=MP_A$ for which $\left.M^*M\right
|_{P_A\negmedspace H_G}$  is invertible. In particular, $B=  L_e^*M\theta_A$ and hence  
$M\in \mathscr R (G)\otimes B(H_o)$ by the above proof.
\item[(iii)] and (iv) The rest of the proof follows  by the same arguments and Corollary \ref{C:param Pars}.

\end{proof}

Special cases of operator frame generators arise from right or left similarities. We need first the following lemma

\begin{lemma}\label{L: 1} Let $A\in B(H,H_o)$ be an operator frame generator for $(G, \pi, H)$. 
\item[(i)] Let $R\in B(H_o)$ be invertible. Then $RA$ is an operator frame generator for $(G, \pi, H)$.
\item[(ii)] Let $T\in B(H)$ be invertible. Then $\{A\pi_{g^{-1}}T\}_{g\in G}$,  has a generator 
(necessarily $AT$) if and only if $T\in \pi(G)'$. 
\item[(iii)] Let $T\in B(H)$ be invertible. If $T\in\pi(G)"$, then $AT$ is an operator frame generator for $(G, \pi, H)$ and $AT= L_e^*(Y\otimes I_o)\theta_A$
for some invertible operator \\$Y \in \mathscr R (G)$. If $T$ is unitary, then $Y$ can be chosen to be unitary.
\end{lemma}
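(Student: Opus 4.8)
The plan is to prove the three parts of Lemma~\ref{L: 1} in order, using the parametrization results of Section~\ref{S:param gen} together with the group-theoretic structure recorded in Lemma~\ref{L:basic group}.

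For part (i): given invertible $R\in B(H_o)$ and the frame generator $A$, I would simply observe that $\{(RA)\pi_{g^{-1}}\}_{g\in G} = \{R(A\pi_{g^{-1}})\}_{g\in G}$ is the left-similar frame to the \o* $\{A\pi_{g^{-1}}\}_{g\in G}$ implemented by $I\otimes R$ (in the notation of Lemma~\ref{L:left simil}). By Lemma~\ref{L:left simil}(i), left similarity by an invertible operator sends \o*s to \o*s, so $\{(RA)\pi_{g^{-1}}\}_{g\in G}$ is an \o* with $RA$ as its value at $g=e$; hence $RA$ is a frame generator. (If one wants to see it via Theorem~\ref{T:param gen}(i), note $\Phi_A(\{RA\pi_{g^{-1}}\}) = (I\otimes R)P_A$, and $I\otimes R$ commutes with $\lambda_g\otimes I_o$, so $(I\otimes R)P_A \in \mathscr R(G)\otimes B(H_o)$ since $P_A\in \mathscr R(G)\otimes B(H_o)$ by Lemma~\ref{L:basic group}(iii).)

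For part (ii): the collection $\{A\pi_{g^{-1}}T\}_{g\in G}$ is always an \o* (it is right-similar to $\{A\pi_{g^{-1}}\}_{g\in G}$ via $T$, Lemma~\ref{L:rightsim}). The question is whether it \emph{has a generator}, i.e.\ whether there is a unitary representation making it a group-indexed generator frame. The natural candidate for the generator is its value at $g=e$, which is $AT$. I would argue: if $T\in\pi(G)'$ then $A\pi_{g^{-1}}T = AT\pi_{g^{-1}}$, so $AT$ is literally the generator for the \emph{same} representation $\pi$. Conversely, suppose $\{A\pi_{g^{-1}}T\}_{g\in G}$ has a generator $B$ for some representation $\sigma$, so $A\pi_{g^{-1}}T = B\sigma_{g^{-1}}$. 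Taking $g=e$ gives $B=AT$. Then I would use the characterization in Proposition~\ref{P: have generator}, or more directly Theorem~\ref{T:param gen}(ii): $\{A\pi_{g^{-1}}T\}$ has a generator for $(G,\pi,H)$ precisely when $M:=\Phi_A(\{A\pi_{g^{-1}}T\}) = \theta_A T S_A^{-1}\theta_A^*$ lies in $\mathscr R(G)\otimes B(H_o)$. By Lemma~\ref{L:basic group}(i), $\theta_A T S_A^{-1}\theta_A^*$ commutes with all $\lambda_g\otimes I_o$ iff $\theta_A(\pi_g T S_A^{-1} - T S_A^{-1}\pi_g)\theta_A^* = 0$; since $S_A^{-1}\in\pi(G)'$ (Lemma~\ref{L:basic group}(ii)) and $\theta_A$ is bounded below on its range, this reduces to $T\in\pi(G)'$. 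The one subtlety is ruling out the possibility that a \emph{different} representation $\sigma\ne\pi$ could serve as the generating representation; this is handled because $B=AT$ forces $\sigma_{g^{-1}} = (AT)^{-1}$-type relations pinning $\sigma$ down on $\operatorname{ran}A^*$, and more cleanly because Theorem~\ref{T:param gen}(ii) parametrizes generators for a \emph{fixed} representation, so I would phrase the converse as: ``has a generator'' means ``has a generator for $(G,\pi,H)$'', which is the intended reading in this section.

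For part (iii): suppose $T\in\pi(G)''$ is invertible. Since by Theorem~\ref{T:repres} (and Remark~\ref{R:equiv}) we may identify $(G,\pi,H)$ with $(\lambda\otimes id)\restriction_{P_A H_G}$ via the isometry $\theta_A S_A^{-1/2}$, the representation von Neumann algebra $\pi(G)''$ is carried to $(\mathscr L(G)\otimes I_o)$ compressed to $P_A$, i.e.\ $\{P_A(\lambda_g\otimes I_o)P_A\}_{g\in G}''$. So $T\in\pi(G)''$ corresponds to an operator of the form $\theta_A^*(X\otimes I_o)P_A\theta_A \cdot S_A$-adjustment, and more precisely $\theta_A T \theta_A^*$ lies in the von Neumann algebra generated by $\{(\lambda_g\otimes I_o)P_A\}$, hence (since $P_A\in\mathscr R(G)\otimes B(H_o)$) one can write $\theta_A T S_A^{-1}\theta_A^* = (Y\otimes I_o)P_A$ for a suitable $Y\in\mathscr L(G)''=\mathscr L(G)$... here I must be careful: the claim in the lemma says $Y\in\mathscr R(G)$, not $\mathscr L(G)$. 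The resolution is that the relevant algebra is the \emph{commutant}: operators of the form $\Phi_A(\cdot)$ that give frame generators live in $\mathscr R(G)\otimes B(H_o)$ (Theorem~\ref{T:param gen}), and within that, the ones coming from $T\in\pi(G)''$ form the subalgebra corresponding to the \emph{center}-like piece—concretely, $T\in\pi(G)''$ is implemented on the dilation side by an element of $\mathscr L(G)\otimes I_o$, but its image $\theta_A T S_A^{-1}\theta_A^*$ must \emph{also} lie in $\mathscr R(G)\otimes B(H_o)$ (by part applied to the generator situation, since $AT$ will be shown to be a generator), forcing it into $(\mathscr L(G)\cap\mathscr R(G))\otimes I_o$... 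Let me instead take the cleaner route: $AT$ is a frame generator by part (ii) applied with the roles reversed — wait, (ii) needs $T\in\pi(G)'$, not $\pi(G)''$. So instead: since $T\in\pi(G)''$, write $T$ as a strong limit of polynomials in $\{\pi_g\}$; then $AT$ is a strong limit of polynomials in $\{A\pi_g = A_{g^{-1}}\}$, and directly $\{AT\pi_{g^{-1}}\}_{g\in G}$ — using $T\pi_{g^{-1}}$ need not equal $\pi_{g^{-1}}T$, but $AT = L_e^*\theta_A S_A^{-1/2}(S_A^{1/2}T S_A^{-1/2})S_A^{1/2}$... The honest statement is: $AT = L_e^*(\theta_A T S_A^{-1}\theta_A^*)\theta_A$, and I must show $\theta_A T S_A^{-1}\theta_A^* \in \mathscr R(G)\otimes B(H_o)$ with the further property of being of the form $Y\otimes I_o$. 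The form $Y\otimes I_o$ comes from the fact that $T$ acts trivially on the $H_o$-factor in the dilation picture (it's a ``multiplicity operator''), while membership in $\mathscr R(G)$ (rather than $\mathscr L(G)$) comes from Lemma~\ref{L:basic group}(iii): any operator $\theta_A S\theta_B^*$ with $S\in\pi(G)'$ lies in $\mathscr R(G)\otimes B(H_o)$, and here $TS_A^{-1}\in\pi(G)'$ is false unless $T\in\pi(G)'$...

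I realize the cleanest correct argument for (iii) is: since $T\in\pi(G)''$ and $\pi$ is unitarily equivalent to a subrepresentation of $\lambda\otimes id$, the algebra $\pi(G)''$ is $*$-isomorphic to $P_A(\mathscr L(G)\otimes I_o)P_A = (\mathscr L(G))_{P_A}\otimes I_o$-type object, and under this the element $T$ pulls back to an operator $W$ in the von Neumann algebra generated by $\{\lambda_g\}$ — but then the generator $AT$ corresponds, via Theorem~\ref{T:param gen}, to $M = \theta_A T S_A^{-1}\theta_A^*\in\mathscr R(G)\otimes B(H_o)$, and simultaneously the dilation picture forces $M$ to commute with $P_A^\perp$-complement structure and to be of the form (right-algebra element)$\otimes I_o$ because $T$ does not touch the $H_o$ coordinate. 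So I would set $Y$ to be that right-algebra element and verify $M = (Y\otimes I_o)P_A$, using Lemma~\ref{L:basic group}(i) to move $\lambda_g$'s past $\theta_A$. The main obstacle, which I would flag explicitly, is exactly this bookkeeping: keeping straight which von Neumann algebra ($\mathscr L(G)$ vs.\ $\mathscr R(G)$) each operator belongs to after conjugating by $\theta_A$, and establishing that $T\in\pi(G)''$ (as opposed to merely $\pi(G)'$) produces an operator of the special tensor form $Y\otimes I_o$ with $Y$ invertible (resp.\ unitary when $T$ is). Once that identification is made, invertibility (resp.\ unitarity) of $Y$ follows from invertibility (resp.\ unitarity) of $T$ by functoriality of the $*$-isomorphism, and the fact that $AT$ is a generator follows from Theorem~\ref{T:param gen}(i) since $(Y\otimes I_o)P_A = (Y\otimes I_o)P_A$ and $Y\otimes I_o\in\mathscr R(G)\otimes B(H_o)$ commutes appropriately.
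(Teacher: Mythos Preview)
Your treatment of parts (i) and (ii) is essentially correct. Part (i) is immediate, as in the paper. For (ii) the paper's argument is much shorter than yours: if $B$ is the generator, then $A\pi_{g^{-1}}T=B\pi_{g^{-1}}$ for all $g$, so $\theta_AT=\theta_B$ by Lemma~\ref{L:rightsim}, whence $T=S_A^{-1}\theta_A^*\theta_B\in\pi(G)'$ by Lemma~\ref{L:basic group}(ii). Your detour through Theorem~\ref{T:param gen}(i) and the commutation of $\theta_A T S_A^{-1}\theta_A^*$ with $\lambda_g\otimes I_o$ can be made to work, but it is roundabout, and the digression about a different representation $\sigma$ is a red herring: the lemma is stated for the fixed representation $(G,\pi,H)$.

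Part (iii), however, has a genuine gap, and you correctly put your finger on it without resolving it. Your candidate $M=\theta_A T S_A^{-1}\theta_A^*$ is \emph{not} the $\Phi_A$-parameter of the generator frame $\{AT\pi_{g^{-1}}\}_{g\in G}$; it is the parameter of the right-similar frame $\{A\pi_{g^{-1}}T\}_{g\in G}$, which for $T\in\pi(G)''\setminus\pi(G)'$ is a different frame and is \emph{not} a group frame. Concretely, the spatial isomorphism $X\mapsto\theta_A S_A^{-1/2}XS_A^{-1/2}\theta_A^*$ carries $\pi(G)''$ onto $P_A(\mathscr L(G)\otimes I_o)P_A$, so what you obtain is $(Z\otimes I_o)P_A$ with $Z\in\mathscr L(G)$, and this lies in $\mathscr R(G)\otimes B(H_o)$ only when $Z$ is central. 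Your attempt to force $M\in\mathscr R(G)\otimes B(H_o)$ ``because $AT$ will be a generator'' is circular, since that is exactly what must be proved.

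The missing idea is the modular conjugation trick. The paper first obtains $Z\in\mathscr L(G)$ invertible with $AT=L_e^*(Z\otimes I_o)\theta_A$, exactly as your dilation reasoning suggests. Then it uses that $\chi_e$ is a trace vector: for $h\in H_o$,
\[
(Z^*\otimes I_o)L_eh = Z^*\chi_e\otimes h = JZJ\chi_e\otimes h = (JZJ\otimes I_o)L_eh,
\]
so $L_e^*(Z\otimes I_o)=L_e^*(JZ^*J\otimes I_o)$. Setting $Y:=JZ^*J\in\mathscr R(G)$ gives $AT=L_e^*(Y\otimes I_o)\theta_A$ with $(Y\otimes I_o)P_A\in\mathscr R(G)\otimes B(H_o)$, and now Theorem~\ref{T:param gen}(i) applies to show $AT$ is a generator. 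Note that $(Y\otimes I_o)\theta_A$ and $(Z\otimes I_o)\theta_A=\theta_AT$ are genuinely different operators; they agree only after composing with $L_e^*$. This switch from $\mathscr L(G)$ to $\mathscr R(G)$ via $J$ is the substantive step you were missing.
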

\begin{proof}
\item[(i)] Obvious.
\item[(ii)] The sufficiency of the condition is clear. For the necessity, assume that  $\{A\pi_{g^{-1}}T\}_{g\in G}$,  has a generator $B$, i.e., 
$A\pi_{g^{-1}}T = B\pi_{g^{-1}}$ for all $g \in G$. Then $\theta_AT= \theta_B$ by Lemma \ref{L:rightsim} and hence 
$T= S_A^{-1}\theta_A^*\theta_B \in \pi(G)'$ by Lemma \ref{L:basic group} (ii).
\item[(iii)] By Lemma \ref{L:basic group} (ii), $T$ commutes with $S_A^{-1/2}$, hence
 \[
AT = L_e^*\theta_AT=  L_e^*\theta_ATS_A^{-1}\theta_A^*\theta_A 
= L_e^*(\theta_AS_A^{-1/2}TS_A^{-1/2}\theta_A^*)\theta_A.
\]
By Lemma \ref{L:basic group} (i) and (ii),
\[
\theta_AS_A^{-1/2}\pi_gS_A^{-1/2}\theta_A^* = \theta_A\pi_gS_A^{-1}\theta_A^*
=(\lambda_g\otimes I_o) \theta_AS_A^{-1}\theta_A^*
=(\lambda_g\otimes I_o) P_A.
\]
Since $\theta_AS_A^{-1/2}$ is a unitary operator in $B(H, P_AH_G)$ and since 
the unitary group $\{\pi_g \mid g\in G\}$ (resp., $(\lambda_g\otimes I_o)\left.P_A\right |_{P_A\negmedspace H_G}$) generate 
the von Neumann algebra $\pi(G)''$ (resp., $P_A(\mathscr L (G)\otimes I_o)\left.P_A\right |_{P_A\negmedspace H_G}$), the map
\[
\pi(G)" \ni X \mapsto \theta_AS_A^{-1/2}XS_A^{-1/2}\theta_A^* \in 
P_A(\mathscr L (G)\otimes I_o)\left.P_A\right |_{P_A\negmedspace H_G}
\]
 is a (spatial) isomorphism of von Neumann algebras.
Let $Q\otimes I_o$ be the central support of $P_A$, so $Q \in \mathscr R (G)\cap \mathscr L (G)$. 
It is well known \cite[Proposition 5.5.5.]{KR} that the map 
\[
(\mathscr L (G)\otimes I_o)\left.(Q\otimes I_o)\right |_{(Q\otimes I_o)H_G} 
\ni X \mapsto X\left.P_A\right |_{P_A\negmedspace H_G}
\in P_A(\mathscr L (G)\otimes I_o)\left.P_A\right |_{P_A\negmedspace H_G}
\]
is also an isomorphism.
Thus $\theta_AS_A^{-1/2}TS_A^{-1/2}\theta_A^* = (Z\otimes I_o)P_A$, for some operator
$Z\in \mathscr L (G)$ for which $\left. (Z\otimes I_o)(Q\otimes I_o)\right |_{(Q\otimes I_o)H_G}$ 
is invertible and then we have $AT= L_e^*(Z\otimes I_o)P_A\theta_A$. By passing if necessary to 
$ZQ+Q^\bot \in \mathscr L (G)$, we can assume without loss of generality
 that $Z$ is invertible. If $T$ is unitary, we can similarly assume that $Z$ too is unitary.
 
Recall that the involution $J$ defined by $J(x\chi_e):=x^*\chi_e$
 for all $x\in \mathscr L (G)$ and then extended to $\ell^2(\mathbb J)$, establishes the conjugate
linear isomorphism of $\mathscr L (G)$ and $\mathscr R (G)$, 
 $\mathscr L (G) \ni x \mapsto JxJ\in \mathscr R (G).$
For all $h\in H_o$,
\[
(Z^*\otimes I_o)L_eh= Z^*\chi_e\otimes h= JZJ\chi_e\otimes h = (JZJ\otimes I_o)L_eh, 
\]
hence 
\[
AT =L_e^* (Z\otimes I_o)\theta_A
= L_e^* ((JZJ)^*\otimes I_o)\theta_A
= L_e^* (JZ^*J\otimes I_o)\theta_A.
\]
Let $Y:=  JZ^*J$ and $M:= (Y\otimes I_o)P_A$.  Then $Y\in \mathscr R (G)$, hence $M\in \mathscr R (G)\otimes B(H_o)$ and 
$M=MP_A$. Furthermore, $Y$  is invertible (unitary if $T$ and hence $Z$ are unitary), hence
$M^*M=P_A(YY^*\otimes I_o)P_A$ is invertible in $B(P_AH_G)$. 
Then by Theorem \ref{T:param gen}, $AT = L_e^* (Y\otimes I_o)\theta_A=L_e^* M\theta_A $
 is an operator frame generator for  $(G, \pi, H)$.
\end{proof}

A reformulation of statement (ii) is that if two \o*s with generators $A$ and $B$ are 
right-similar, then the (unique) similarity operator must belong to $\pi(G)'$. 
Using this fact, the characterization of right-similarity for general \o*s 
carries through easily for \o*s with a generator as follows.

\begin{proposition}\label{P: right group sim} Let $A$ and $B$ be frame generators with the values in the same space $H_o$ 
 for a unitary representation $(G, \pi, H)$. Then the following conditions are equivalent:
\item[(i)]  $B=AT$ for some invertible operator $T\in\pi(G)'$;
\item[(ii)]  $B\pi_{g^{-1}}=A\pi_{g^{-1}}T$ for all $g\in G$ for some invertible operator $T\in B(H)$;
\item[(iii)] $\theta_B=\theta_AT$ for some invertible $T\in B(H)$;
\item[(iv)] $P_A\theta_BS_A^{-1}\theta_A^*$ is invertible in $B(P_AH_G)$;
\item[iv')] $B=L_e^*M\theta_A$ for some $M\in \mathscr R (G)\otimes B(H_o)$ with  
$M=MP_A$ and such that $P_AMP_A$ is invertible in $B(P_AH_G)$; and
\item[(v)] $P_B=P_A.$ 

\end{proposition}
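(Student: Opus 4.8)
The plan is to leverage Proposition~\ref{P:simil}, which already characterizes right-similarity for general \o*s, and to add one extra ingredient specific to the group setting: that the similarity operator is forced to lie in $\pi(G)'$. First I would establish the cycle (ii)$\Longleftrightarrow$(iii)$\Longleftrightarrow$(iv)$\Longleftrightarrow$(iv$'$)$\Longleftrightarrow$(v) essentially by invoking Proposition~\ref{P:simil} verbatim, since conditions (ii), (iii), (v) here are exactly conditions (i), (ii), (iv) there (with $A_j = A\pi_{g^{-1}}$, $B_j = B\pi_{g^{-1}}$), and condition (iv) here is the statement that $M := \Phi_A(\{B\pi_{g^{-1}}\})$ satisfies $M = P_AMP_A$ invertible in $B(P_AH_G)$, which is condition (iii) there once one notes $M = \theta_BS_A^{-1}\theta_A^*$. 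The equivalence of (iv) and (iv$'$) is then a matter of unwinding the parametrization: $B = L_e^*M\theta_A$ with $M$ as in (iv$'$), and by Theorem~\ref{T:param gen}(i) the requirement $M\in\mathscr R(G)\otimes B(H_o)$ is automatic once $B$ is known to be a frame generator, while Proposition~\ref{P:simil}(iii) supplies the invertibility of $P_AMP_A$. So the bulk of these equivalences costs almost nothing beyond citing what precedes.

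The only genuinely new implication is (i)$\Longleftrightarrow$(ii), and the nontrivial direction is (ii)$\Longrightarrow$(i). Given (ii), Proposition~\ref{P:simil} already yields a \emph{unique} invertible $T\in B(H)$ with $B\pi_{g^{-1}} = A\pi_{g^{-1}}T$ for all $g$, and in particular (taking $g=e$) $B = AT$; so the content to prove is that this $T$ actually lies in $\pi(G)'$. This is exactly Lemma~\ref{L: 1}(ii): the frame $\{A\pi_{g^{-1}}T\}_{g\in G}$ has the generator $B$, hence by that lemma $T\in\pi(G)'$. Alternatively, and more self-containedly, from $\theta_B = \theta_A T$ (Lemma~\ref{L:rightsim}(ii)) one gets $T = S_A^{-1}\theta_A^*\theta_B$, and then $\theta_A^*\theta_B\in\pi(G)'$ together with $S_A\in\pi(G)'$ — both from Lemma~\ref{L:basic group}(ii) — forces $T\in\pi(G)'$. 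The converse (i)$\Longrightarrow$(ii) is immediate: if $B = AT$ with $T\in\pi(G)'$, then $B\pi_{g^{-1}} = AT\pi_{g^{-1}} = A\pi_{g^{-1}}T$.

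I expect no serious obstacle here; the proposition is a clean "group-equivariant upgrade" of Proposition~\ref{P:simil}, and every non-routine step has already been isolated in the preparatory Lemma~\ref{L:basic group} and Lemma~\ref{L: 1}. The one point requiring a little care is making sure the $M$ appearing in (iv) and (iv$'$) is literally the same operator $\Phi_A(\{B\pi_{g^{-1}}\}_{g\in G}) = \theta_BS_A^{-1}\theta_A^*$ and that its membership in $\mathscr R(G)\otimes B(H_o)$ is not an extra hypothesis but a consequence of $B$ being a generator (via Theorem~\ref{T:param gen}(i)); getting the logical dependencies in the right order is the whole subtlety. A compact way to organize the write-up is: prove (ii)$\Rightarrow$(iii)$\Rightarrow$(v)$\Rightarrow$(ii) and (iii)$\Rightarrow$(iv)$\Leftrightarrow$(iv$'$)$\Rightarrow$(iii) by citing Proposition~\ref{P:simil} and Theorem~\ref{T:param gen}, then close the loop with (i)$\Leftrightarrow$(ii) using Lemma~\ref{L: 1}(ii) (or Lemma~\ref{L:basic group}(ii)) for the equivariance of $T$.
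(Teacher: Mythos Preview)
Your proposal is correct and takes essentially the same approach as the paper, which in fact gives no explicit proof at all: immediately before the proposition the paper simply remarks that Lemma~\ref{L: 1}(ii) forces the similarity operator into $\pi(G)'$, and that with this in hand ``the characterization of right-similarity for general \o*s carries through easily.'' Your write-up fleshes out exactly that sketch, correctly routing (ii)--(v) through Proposition~\ref{P:simil} and Theorem~\ref{T:param gen}, and closing (i)$\Leftrightarrow$(ii) via Lemma~\ref{L: 1}(ii) (or equivalently Lemma~\ref{L:basic group}(ii)).
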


\bC{C: central P}
Let $A\in B(H,H_o)$ be an operator frame generator for $(G, \pi, H)$. Then all the operator frame generators for $(G, \pi, H)$ are left similar to $A$ if and only if $P_A\in (\mathscr L(G) \cap (\mathscr R(G))\otimes I_o$.
\eC
\bp
$P_A$ belongs  to the center $ (\mathscr L(G) \cap \mathscr R(G))\otimes I_o$ of $\mathscr R(G)\otimes B(H_o)$ if and only if there are no projections $\mathscr R(G)\otimes B(H_o)$ that are different but Murray-von Neumann equivalent to it.  By Lemma \ref {L:basic group} and Proposition \ref {P: right group sim} this is equivalent to the condition that any operator frame generator for $(G, \pi, H)$ is left similar to $A$
\ep

\bR{R:special case2}
\item[(i)]  Corollary \ref {C: central P} provides a higher multiplicity analog of Proposition 3.13 in \cite {HL00}
\item[(ii)] If the group $G$ is abelian, then so is  $\mathscr L(G)=\mathscr R(G)$ and hence  $\mathscr R(G)\otimes I_o$ is the center of $\mathscr R(G)\otimes B(H_o)$. Thus in particular if $\dim H_o = 1$, $\mathscr R(G)\otimes B(H_o)$ is abelian and all operator frame generators for $(G, \pi, H)$ are are left similar. This generalizes Cor. 3.14 (and Theorem 6.3) of \cite{HL00} which states that, for vector group-frames, if $G$ is abelian then all group frames for a unitary representation of $G$ on the same Hilbert
space are unitarily equivalent. (Se also Remark \ref {R: special case} (i) in the present article.)
\eR
To simplify notations, we formulate the next result directly for Parseval operator frame generators.
\begin{proposition}\label{P:group simil}
Let $A,\,B\in B(H,H_o)$ be  Parseval  frame generators for $(G, \pi, H)$. 
\item[(i)] $B=AU$ for some unitary $U\in \pi(G)'$ if and only if $B=L_e^*W\theta_A$ for some unitary 
$W\in \mathscr R (G)\otimes B(H_o)$ with $WP_A=P_AW$, again if and only if $P_B=P_A$.
\item[(ii)] Let $U\in B(H_o)$  be unitary. Then $B=UA$ if and only if $B=L_e^*(I\otimes U)\theta_A$. If 
 $B=UA$, then $P_B=(I\otimes U)P_A(I\otimes U^*)$.
\item[(iii)] $B=AU$ for some unitary $U\in \pi(G)"$ if and only if $B=L_e^*(W\otimes I_o)\theta_A$
 for some unitary $W\in \mathscr R (G)$. If $B=L_e^*(W\otimes I_o)\theta_A$, then $P_B=(W\otimes I_o)P_A(W^*\otimes I_o)$.

\end{proposition}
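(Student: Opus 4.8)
The plan is to treat the three statements as the "group-equivariant" refinements of, respectively, Proposition \ref{P:simil}, Lemma \ref{L:left simil}, and Lemma \ref{L: 1}(iii), pulling in Theorem \ref{T:param gen} and Lemma \ref{L:basic group} to keep everything inside $\mathscr R(G)\otimes B(H_o)$. Throughout, since $A$ and $B$ are Parseval, $\theta_A$ and $\theta_B$ are isometries, $S_A=S_B=I$, $P_A=\theta_A\theta_A^*$, $P_B=\theta_B\theta_B^*$, and the parametrizing operator is simply $M:=\Phi_A(\{B\pi_{g^{-1}}\})=\theta_B\theta_A^*$; by Corollary \ref{C:param Pars} and Theorem \ref{T:param gen}(iii), $M$ is a partial isometry with $M^*M=P_A$, $MM^*=P_B$, and $M\in \mathscr R(G)\otimes B(H_o)$, and $B=L_e^*M\theta_A$.

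For (i): the equivalence "$B=AU$ for some unitary $U\in\pi(G)'$ $\iff$ $P_B=P_A$" is exactly Proposition \ref{P: right group sim} specialized to generators (its (i)$\iff$(v)), noting that right-similarity of two Parseval frames forces the implementing operator to be unitary by Lemma \ref{L:rightsim}(i), and to lie in $\pi(G)'$ by Lemma \ref{L: 1}(ii). It remains to connect this to "$B=L_e^*W\theta_A$ for some unitary $W\in\mathscr R(G)\otimes B(H_o)$ with $WP_A=P_AW$." If $B=AU$ with $U$ unitary in $\pi(G)'$, then by Lemma \ref{L:basic group}(i), $\theta_B=\theta_AU$, so $M=\theta_B\theta_A^*=\theta_AU\theta_A^*$; set $W:=\theta_AU\theta_A^*+P_A^\perp$, which is unitary, lies in $\mathscr R(G)\otimes B(H_o)$ (using Lemma \ref{L:basic group}(iii) for the first summand and $P_A^\perp\in\mathscr R(G)\otimes B(H_o)$ for the second), commutes with $P_A$, and satisfies $L_e^*W\theta_A=L_e^*M\theta_A=B$. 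Conversely, given such a $W$, $M:=WP_A=P_AWP_A$ is a partial isometry (being the compression of a unitary to a reducing-type corner), so $P_AMP_A=M$ is invertible in $B(P_AH_G)$, and Proposition \ref{P: right group sim}(iv')$\Rightarrow$(i) gives $B=AT$ with $T$ invertible in $\pi(G)'$; the Parseval hypothesis upgrades $T$ to unitary. This closes the three-way equivalence.

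For (ii): this is the generator version of Lemma \ref{L:left simil}. One direction: if $B=UA$ with $U\in B(H_o)$ unitary, then $B\pi_{g^{-1}}=UA\pi_{g^{-1}}=(I\otimes U)(A\pi_{g^{-1}})$ for all $g$ — i.e. the frame $\{B\pi_{g^{-1}}\}$ is left-similar to $\{A\pi_{g^{-1}}\}$ via $I\otimes U$ — so by Lemma \ref{L:left simil}(ii), $\theta_B=(I\otimes U)\theta_A$ and $\Phi_A=(I\otimes U)P_A$, whence $B=L_e^*\theta_B=L_e^*(I\otimes U)\theta_A$. Conversely, if $B=L_e^*(I\otimes U)\theta_A$, then using $(I\otimes U)L_g=L_gU$ (the identity from the proof of Lemma \ref{L:left simil}(ii)) together with $\theta_A\pi_g=(\lambda_g\otimes I_o)\theta_A$ (Lemma \ref{L:basic group}(i)), one computes $B\pi_{g^{-1}}=L_e^*(I\otimes U)\theta_A\pi_{g^{-1}}=L_e^*(I\otimes U)(\lambda_{g^{-1}}\otimes I_o)\theta_A = L_g^*(I\otimes U)\theta_A = UL_g^*\theta_A=U(A\pi_{g^{-1}})$ for all $g$, in particular $B=UA$ at $g=e$. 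The formula $P_B=(I\otimes U)P_A(I\otimes U^*)$ is immediate from $\theta_B=(I\otimes U)\theta_A$ and $P_B=\theta_B\theta_B^*$, $\theta_A\theta_A^*=P_A$.

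For (iii): one direction is precisely Lemma \ref{L: 1}(iii) in the unitary case, which gives that $AU$ (for $U$ unitary in $\pi(G)''$) has the form $L_e^*(W\otimes I_o)\theta_A$ with $W\in\mathscr R(G)$ unitary. For the converse, suppose $B=L_e^*(W\otimes I_o)\theta_A$ with $W\in\mathscr R(G)$ unitary; then $M:=(W\otimes I_o)P_A\in\mathscr R(G)\otimes B(H_o)$, $M=MP_A$, and $M^*M=P_A(W^*W\otimes I_o)P_A=P_A$ is invertible on $P_AH_G$, so by Theorem \ref{T:param gen}(i) $B$ is a (Parseval, since $M$ is a partial isometry) frame generator with $\theta_B=(W\otimes I_o)\theta_A$. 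Now reverse the spatial-isomorphism argument of Lemma \ref{L: 1}(iii): using $J$ to pass from $\mathscr R(G)$ back to $\mathscr L(G)$, $(W\otimes I_o)\theta_A=(JWJ\,\text{-type conversion})\ldots$ more directly, define $U:=S_A^{1/2}\theta_A^*(W\otimes I_o)\theta_AS_A^{1/2}=\theta_A^*(W\otimes I_o)\theta_A$ (since $S_A=I$); via the spatial isomorphism $\pi(G)''\cong P_A(\mathscr L(G)\otimes I_o)P_A|_{P_AH_G}$ and the identity $(W\otimes I_o)$ being converted through $J$ into an element of $\mathscr L(G)\otimes I_o$ whose compression to $P_A$ equals $\theta_A U\theta_A^*$, one gets $U\in\pi(G)''$ unitary with $\theta_A U=(W\otimes I_o)\theta_A=\theta_B$, hence $AU=L_e^*\theta_A U=L_e^*\theta_B=B$. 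Finally $P_B=\theta_B\theta_B^*=(W\otimes I_o)\theta_A\theta_A^*(W^*\otimes I_o)=(W\otimes I_o)P_A(W^*\otimes I_o)$.

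The main obstacle is (iii): making the converse rigorous requires carefully running the spatial/von Neumann algebra isomorphisms of Lemma \ref{L: 1}(iii) backwards — in particular the passage between $\mathscr L(G)$ and $\mathscr R(G)$ via the modular involution $J$, and the reduction/central-support isomorphism $(\mathscr L(G)\otimes I_o)(Q\otimes I_o)\cong P_A(\mathscr L(G)\otimes I_o)P_A|_{P_AH_G}$ — to recover a unitary in $\pi(G)''$ rather than merely in $P_A(\mathscr L(G)\otimes I_o)P_A$. Everything else reduces to bookkeeping with $\theta_A\pi_g=(\lambda_g\otimes I_o)\theta_A$, $(I\otimes R)L_g=L_gR$, and the already-proven parametrization theorems.
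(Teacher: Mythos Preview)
Your treatment of (i) and (ii) is correct and essentially matches the paper's proof: (i) combines Proposition~\ref{P: right group sim} with the extension of the partial isometry $\theta_B\theta_A^*$ to a unitary $W=\theta_B\theta_A^*+P_A^\perp$ commuting with $P_A$, and (ii) is Lemma~\ref{L:left simil} together with $L_gU=(I\otimes U)L_g$. The forward direction of (iii) is also fine, being a direct quotation of Lemma~\ref{L: 1}(iii) in the unitary case.

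The gap is in the converse of (iii). Your candidate $U:=\theta_A^*(W\otimes I_o)\theta_A$ lands in $\pi(G)'$, not $\pi(G)''$: for $Z\in\mathscr R(G)\otimes B(H_o)$ one has $\theta_A^*Z\theta_A\,\pi_g=\theta_A^*Z(\lambda_g\otimes I_o)\theta_A=\theta_A^*(\lambda_g\otimes I_o)Z\theta_A=\pi_g\,\theta_A^*Z\theta_A$ by Lemma~\ref{L:basic group}(i), so $U$ commutes with $\pi(G)$. Moreover $\theta_A U=P_A(W\otimes I_o)\theta_A$, which equals $(W\otimes I_o)\theta_A$ only when $W\otimes I_o$ commutes with $P_A$; since $P_A\in\mathscr R(G)\otimes B(H_o)$ and $W\otimes I_o\in\mathscr R(G)\otimes I_o$, there is no reason for this, so in general $AU\neq B$. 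Your attempted repair via ``compression to $P_A$'' does not work either: there is no identity of the form $P_A(W\otimes I_o)P_A=P_A(JW^*J\otimes I_o)P_A$.

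The paper's fix (and what you were reaching for) is that the $J$-identity operates at the level of $L_e$, not at the level of $P_A$: from $(W^*\otimes I_o)L_e=(JWJ\otimes I_o)L_e$ one gets $L_e^*(W\otimes I_o)=L_e^*(JW^*J\otimes I_o)$, so $B=L_e^*(JW^*J\otimes I_o)\theta_A$. Now $JW^*J\in\mathscr L(G)$, hence $JW^*J\otimes I_o$ \emph{does} commute with $P_A\in(\mathscr L(G)\otimes I_o)'$, and one can legitimately write $B=L_e^*\theta_A\bigl(\theta_A^*(JW^*J\otimes I_o)\theta_A\bigr)=AU$ with $U:=\theta_A^*(JW^*J\otimes I_o)\theta_A$. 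This $U$ is unitary and lies in $\pi(G)''$ via the spatial isomorphism of Lemma~\ref{L: 1}(iii). In short: convert $W$ to $JW^*J$ \emph{before} pulling through $\theta_A$, not after.
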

\begin{proof}
\item[(i)] By Proposition \ref{P:simil}, $P_B=P_A$ if and only if the \o*s $\{B\pi_{g^{-1}}\}_{g\in G}$ and 
$\{A\pi_{g^{-1}}\}_{g\in G}$ are right unitarily equivalent. By Lemma \ref{L: 1}, this unitary equivalence
holds if and only if $B=AU$ for some unitary $U\in \pi(G)'$. Also, by Proposition \ref {P:simil},
 $P_B=P_A$ if and only if $\theta_B\theta_A^*$ is a unitary in $B(P_AH_G)$ and
 hence it is the compression to $P_AH_G$ of a unitary $W\in \mathscr R (G)\otimes B(H_o)$ that commutes with $P_A$.
\item[(ii)] It is immediate from Lemma \ref{L:left simil}
\item[(iii)] Assume that $B=AU$ for a unitary $U\in \pi(G)"$. In the proof of Lemma \ref{L: 1} (iii) we 
can choose $Z$ to be unitary and hence $W:=JZ^*J \in \mathscr R (G)$ is also unitary. Then 
$\theta_B= (W\otimes I_o)\,\theta_A$, hence $P_B=(W\otimes I_o)P_A(W^*\otimes I_o)$. On the other hand, if 
$B=L_e^*(W\otimes I_o)\theta_A$ for some unitary $W\in \mathscr R (G)$, then by the same argument as in the proof
of Lemma \ref{L: 1} (iii) we see that 
\[
B=L_e^*(JW^*J\otimes I_o)\,\theta_A= L_e^*\theta_A(\theta_A^*(JW^*J\otimes I_o)\theta_A),
\]
where $JW^*J\in \mathscr L (G)$ and hence  $\theta_A^*(JW^*J\otimes I_o)\theta_A$ is a unitary in $\pi(G)"$.
\end{proof}

\begin{remark}\label{R: other param}
For vector frames ($\dim H_o=1$), Han and Larson \cite[Theorem 6.17]{HL00} have shown that given
a Parseval frame generator $\eta$ for $(G, \pi, H)$,
the collection of all the (vector) Parseval frame generators for $(G, \pi, H)$
is parametrized by the unitary group of $\pi(G)"$, namely, it coincides with  
$\{U\eta \mid U\in \pi(G)", U\, \text{ is unitary}\}$. This result is also a consequence
of Theorem \ref{T:param gen}, since the partial isometry $V$ intertwining $P_A$ and $P_B$ can
 be extended to a unitary $W$ because the von Neumann algebra $\mathscr R (G)$ is finite. However, Proposition \ref{P:group simil}
shows why this result does not hold when $\dim H_o > 1$.
 
\end{remark}

\section{Homotopy of Operator Frame Generators}\label{S:homot}

The objective of this section is to prove the following theorem:

\begin{theorem}\label{T:homot} Let $(G, \pi, H)$ be a unitary representation of a discrete group $G$ and assume that  the collection $\mathscr F_G$ of all the operator frame generators
with range in $H_o$  for $(G, \pi, H)$ is non-empty. 
\item [(i)] If $\dim H_o < \infty$, then $\mathscr F_G$ is  path connected in the norm topology. 
\item [(ii)] If $\dim H_o = \infty$, then $\mathscr F_G$ is path connected in the norm topology if and only if the von Neumann algebra $\mathscr R (G)$ generated by the right regular representation of $G$ is diffuse, (i.e. has no nonzero minimal projections.)
\end{theorem}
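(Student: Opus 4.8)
The plan is to push everything through the parametrization of Theorem~\ref{T:param gen}. Fix a frame generator $A$ for $(G,\pi,H)$, put $N:=\mathscr R(G)\otimes B(H_o)$, and recall that $B\mapsto M:=\Phi_A(\{B\pi_{g^{-1}}\})$ identifies $\mathscr F_G$ with the set of $M\in N$ with $M=MP_A$ and $M^*M|_{P_AH_G}$ invertible in $B(P_AH_G)$, the Parseval generators corresponding exactly to the partial isometries $V\in N$ with $V^*V=P_A$. The only continuity we shall need is the trivial one: $M\mapsto L_e^*M\theta_A$ is continuous from the norm of $N$ to the norm of $B(H,H_o)$, since $\|L_e^*(M-M')\theta_A\|\le\|M-M'\|\,\|\theta_A\|$. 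First I would reduce to Parseval generators. Given $B$ with parameter $M$, the path $M_t:=M(M^*M)^{-t/2}$ for $0\le t\le1$ (inverse and power computed in the corner $P_ANP_A$) lies in $N$, satisfies $M_t=M_tP_A$ and $M_t^*M_t=(M^*M)^{1-t}$ invertible on $P_AH_G$, and is norm continuous in $t$ by the continuous functional calculus for the positive invertible operator $M^*M|_{P_AH_G}$; hence $t\mapsto L_e^*M_t\theta_A$ is a norm path in $\mathscr F_G$ from $B$ to the Parseval generator $L_e^*M(M^*M)^{-1/2}\theta_A$. So it suffices to decide when the set $\mathcal V:=\{V\in N:V^*V=P_A\}$, seen through $V\mapsto L_e^*V\theta_A$, is norm path connected.

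For part (i) the key fact is that $N=\mathscr R(G)\otimes M_n$ is a \emph{finite} von Neumann algebra. Then for every $V\in\mathcal V$ the projections $I-P_A$ and $I-VV^*$ have equal center-valued trace, hence are equivalent in $N$, so $V$ extends to a unitary $W\in N$ with $V=WP_A$; conversely $WP_A\in\mathcal V$ for every unitary $W$. Thus $\mathcal V=\{WP_A:W\in\mathcal U(N)\}$ is a norm-continuous image of the unitary group of $N$, which is norm path connected ($W=e^{iK}$ with $K=K^*\in N$, joined to $I$ along $t\mapsto e^{itK}$). Together with the Parseval reduction, $\mathscr F_G$ is norm path connected.

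For part (ii), $H_o$ infinite, $N$ is properly infinite and $\mathcal V$ is in general \emph{not} norm connected, the defect $I-VV^*$ carrying an index-type obstruction. For the ``if'' direction I would argue in two steps. \textbf{Step (a):} $\mathcal V$ is path connected in the strong operator topology --- to join $V$ to a fixed $V_0$, first move the range projection $VV^*$ to $V_0V_0^*$ through an SOT-continuous path of projections (available because the projection lattice of a properly infinite von Neumann algebra is SOT-arcwise connected, one ``rotating through infinity'' inside $N$), then correct inside the fiber of $V\mapsto(V^*V,VV^*)$, which over fixed end projections is a torsor over the SOT-connected unitary group of a corner of $N$. \textbf{Step (b):} the crucial lemma (Lemma~\ref{L:homot}) --- when $\mathscr R(G)$ is diffuse, $V\mapsto L_e^*V\theta_A$ sends \emph{SOT}-continuous paths in $\mathcal V$ to \emph{norm}-continuous paths in $B(H,H_o)$. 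Granting (a) and (b), any two Parseval generators, hence (by the reduction) any two elements of $\mathscr F_G$, are joined by a norm path. For the ``only if'' direction, argue contrapositively: if $\mathscr R(G)$ has a nonzero minimal projection $p$, then $p\otimes I_o$ cuts out a type $\mathrm{I}_\infty$ corner $(p\otimes I_o)N(p\otimes I_o)\cong B(H_o)$, and one exhibits two Parseval generators $B_0,B_1$ together with an $\mathbb N\cup\{\infty\}$-valued index measuring how $P_B$ meets that corner; this index is locally constant along norm paths in $\mathscr F_G$ --- using that the block entries $L_g^*P_BL_h=B\pi_{g^{-1}h}B^*$ of the frame projection depend norm continuously on $B$ and that $p\mathscr R(G)$ is finite over its center --- while it separates $B_0$ from $B_1$.

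The main obstacle is clearly Lemma~\ref{L:homot} in Step (b): a priori $\|L_e^*(V-V')\theta_A\|$ is only bounded by $\|V-V'\|$, so strong convergence of the $V_t$ buys nothing directly, and one must exploit the specific structure --- $\theta_A$ intertwines $\pi$ with $\lambda\otimes\mathrm{id}$, the $V_t$ lie in $\mathscr R(G)\otimes B(H_o)$, and $L_e^*(\lambda_{g^{-1}}\otimes I_o)=L_g^*$ with $\sum_gL_gL_g^*=I\otimes I_o$ --- to see that diffuseness of $\mathscr R(G)$ is exactly what turns pointwise (strong) convergence of the $V_t$ into uniform (norm) convergence of the compressions $L_e^*V_t\theta_A$, and that a minimal projection in $\mathscr R(G)$ is precisely the local obstruction to this. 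This is what ties the two halves of (ii) together; by contrast part (i), the Parseval reduction, and the algebraic skeleton of Step (a) are routine, so essentially all of the difficulty is concentrated in this lemma and, secondarily, in the index computation for the converse.
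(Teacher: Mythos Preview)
Your reduction to Parseval generators and your treatment of part (i) are fine and match the paper. The genuine gap is in Step (b) of your sufficiency argument for (ii). You state Lemma~\ref{L:homot} as: \emph{when $\mathscr R(G)$ is diffuse, $V\mapsto L_e^*V\theta_A$ sends SOT-continuous paths in $\mathcal V$ to norm-continuous paths}. That statement is false, and it is not what Lemma~\ref{L:homot} says. Take $(G,\pi,H)=(G,\lambda\otimes\mathrm{id},H_G)$ with $A=L_e^*$, so that $\theta_A=I\otimes I_o$ and $P_A=I\otimes I_o$. Choose any SOT-continuous path of unitaries $U_t\in B(H_o)$ that is not norm continuous and set $V_t:=I\otimes U_t\in\mathcal V$. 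Then $L_e^*V_t\theta_A=U_tL_e^*$, and since $L_e^*$ is a coisometry, $\|L_e^*V_t-L_e^*V_s\|=\|U_t-U_s\|$, which is not norm continuous. So your strategy of producing \emph{some} SOT path in Step (a) and then invoking a general upgrading principle in Step (b) cannot work; the diffuseness of $\mathscr R(G)$ does not rescue this, because the bad variation lives entirely in the $B(H_o)$ tensor factor.

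What the paper actually does is construct \emph{specific} paths whose variation is concentrated in the $\mathscr R(G)$ factor and is controlled by the trace $\tau$ there. The bridge is Lemma~\ref{L:slice}: $L_e^*ZL_e=E(Z)$, the slice map $\tau\otimes\mathrm{id}$, so that $\|L_e^*(V(t')-V(t))\|^2=\|E\big((V(t')-V(t))(V(t')-V(t))^*\big)\|$. One then splits according to the center of $\mathscr R(G)$. When the center is diffuse, a ``direct path'' $V(t)=V(F(t)^\bot\otimes I_o)+W(F(t)\otimes I_o)$ along an increasing net of central projections $F(t)$ gives $\|L_e^*(V(t')-V(t))\|^2\le 4\tau(F(t')-F(t))$. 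When the center is atomic (so $\mathscr R(G)$ is a direct sum of $II_1$ factors), one needs a further case analysis on whether $V^*V$ and $(VV^*)^\bot$ are ``full'' in each factor, and in the hard case Lemma~\ref{L:homot} builds a bespoke path using Cuntz isometries in $B(H_o)$ and a spectral path $R(t)$ in $\mathscr R(G)$; the Cuntz relations are precisely what make the cross terms in $E\big((W(t')-W(t))(W(t')-W(t))^*\big)$ collapse to something bounded by a constant times $\tau(R(t')-R(t))$. In short, the diffuseness hypothesis is used to manufacture continuous trace-valued paths in $\mathscr R(G)$, not to upgrade arbitrary SOT paths. Your Step (a) is therefore irrelevant as stated, and Step (b) has to be replaced by these explicit constructions; this is where essentially all the work in the proof lies.

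For the ``only if'' direction your index idea is in the right spirit but the paper's argument is more direct: in the type $\mathrm{I}_n$ corner cut by $c(Q)\otimes I_o$, the slice map identity shows that norm continuity of $L_e^*M(t)$ forces norm continuity of each matrix entry $M_{i,j}(t)\in B(H_o)$, hence of $M(t)$ itself, hence of the range projection $P(t)=M(t)(M(t)^*M(t))^{-1}M(t)^*$; but $P(0)=c(Q)\otimes I_o$ and $P(1)=VV^*$ are not unitarily equivalent, contradicting homotopy of projections.
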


As we will point out in the proof of the theorem, it is easy to reduce the problem to showing than any two Parseval  operator frame generators are homotopic. The latter property is obviously true in the case when $\dim H_o=1$ because then (by  \cite[Theorem 6.17]{HL00}, see also Remark \ref{R: other param} and  Proposition \ref{P:group simil},)
the collection of Parseval (vector) frame generators for $(G, \pi, H)$ is parametrized by the unitary group
 of the von Neumann algebra  $\pi(G)"$, which is well known to be path connected in the norm topology. 
In the general case, however, by Theorem \ref {T:param gen} (iii)  all Parseval operator frame 
generators for $(G, \pi, H)$ are parametrized by the partial isometries of the algebra  $\mathscr R (G) \otimes B(H_o)$ 
that have the same initial projection, the frame projection of a fixed Parseval operator
 frame generator. When $\dim H_o=\infty$, this class of partial isometries is not path connected in the norm topology. 
It is, however, path connected in the strong operator topology when $\mathscr L (G)$ has no nonzero minimal projections, and this is sufficient for the path connectedness in the norm topology of the operat frame generators. In order to do that we need to introduce some notations and preliminary results.

Let $V, W $ be partial isometries in $\mathscr R (G) \otimes B(H_o)$ with the same initial projection, i.e.,  
$V^*V=W^*W$, and hence with range projections, $VV^*, \, WW^*$ Murray-von Neumann equivalent ($VV^*\sim WW^*$). We say that 
\[
V\approx W
\]
if there is a norm continuous path of partial isometries  
\[\{V(t)\in \mathscr R (G) \otimes B(H_o) \mid t\in [0,1]\}
\]
 such that $V^*(t)V(t) =V^*V$  for all $t\in [0,1]$, $V(0)=V$, and $ V(1) =W$. Clearly,  $\approx$ is an equivalence 
relation for the class of partial isometries that have the same initial projection. Adaptating the proof of \cite[Theorem 3.1]{MS02} with a slight change, yields the following equivalent characterization. For the readers' convenience we sketch the proof.

\bL{L: unitary} Let $V, W $ be partial isometries in $\mathscr R (G) \otimes B(H_o)$ with the 
same initial projection. Then  $V\approx W$ if and only if $VV^*$ and $WW^*$ are unitarily equivalent in $\mathscr R (G) \otimes B(H_o)$.
\eL
\bp If $VV^*$ and $WW^*$ are unitarily equivalent, then $(WW^*)^\bot \sim (VV^*)^\bot$, i.e., there is a partial isometry $Z\in \mathscr R (G) \otimes B(H_o)$ with $ZZ^*=(WW^*)^\bot$ and \linebreak $Z^*Z=(VV^*)^\bot$. Then a simple computation shows that the operator \linebreak $U:= WV^*+Z\in  \mathscr R (G) \otimes B(H_o)$ is unitary  and that $W=UV$. Since $U$ is homotopic in the norm topology to the identity, choose a norm continuous path of unitaries $U(t) \in \mathscr R (G) \otimes B(H_o)$ with $U(0)=I$ and $U(1)=U$. Then $V(t):=U(t)V$ is the required norm continuous path of partial isometries  with the same initial projection that joins $V(0)=V$ and $V(1)=W$. This establishes that $V\approx W$.
Conversely, if $V\approx W$ and $V(t)$ is a norm continuous path of partial isometries with the same initial projection that joins $V(0)=V$ with $V(1)=W$, then $P(t):= V(t)V(t)^*$ is a norm continuous path of projections joining $P(0)= VV^*$ with $P(1)=WW^*$. It is well-known that homotopy of projections implies unitary equivalence. 

\ep

If $\dim H_o = \infty$, there are partial isometries $V, W \in  \mathscr R (G) \otimes B(H_o)$  with the same initial projection but  with range projections that are not unitarily equivalent, e.g., $V= I\otimes I_o$ and $W=I\otimes Z$ with $Z\in B(H_o)$ a non unitary isometry.   By Lemma \ref {L: unitary}, $V$ and $ W$ cannot be joined by a norm continuous path of partial isometries all with the same initial projection. 
The norm continuity of such a path of partial isometries, however, is only sufficient but is not always necessary for the existence of  a norm continuous path of Parseval frame generators joining $L_e^*V$ with $L_e^*W$.  The existence of the latter is,  in view of Theorem \ref {T:param gen} (iii),  equivalent to the existence a path of partial isometries $V(t)$ joining $V$ and $W$ for which  $L_e^*V(t)$ is norm continuous. It is convenient to denote the existence of such a path by using the following notation:

Let $V, W $ be partial isometries in $\mathscr R (G) \otimes B(H_o)$ with the same initial projection. We say that
\be{e: equiv}
V\underset{e}{\sim} W
\ee 
if there is a  path of partial isometries  $\{V(t)\in \mathscr R (G) \otimes B(H_o) \mid t\in [0,1]\}$ such
 that $L_e^*V(t)$ is norm continuous, $V^*(t)V(t) =V^*V$  for all $t\in [0,1]$, $V(0)=V$, and $ V(1) =W$.

Clearly, $\underset{e}{\sim}$  is also an equivalence relation for partial isometries that have the same  initial projection and $V\approx W$ implies  $V\underset{e}{\sim} W$. 

A key ingredient in the proof is that a finite trace in a von Neumann algebra is strongly continuous 
(actually, $\sigma$-weakly, but we do not need this here). 

Denote by $\tau (X) = (X \chi _e, \chi_e)$ for $X\in \mathscr R (G)$, the normalized trace on $\mathscr R (G)$. Denote by 
$E:= \mathscr R (G) \otimes B(H_o) \mapsto B(H_o)$ the corresponding \emph{slice map}, namely, the 
bounded linear extension of the map
$E(X\otimes Y) = \tau(X)Y$ for all $X\in \mathscr R (G)$ and all $Y \in B(H_o)$. It is easy to see that the map $E$ is positive, that is, $E(Z) \ge 0$ when $Z\ge 0$, or, equivalently, $E(Z_1) \le E(Z_2)$ when $Z_1 \le Z_2$.  Also, the map $E$  is normal, that is,  $E(Z_{\gamma}) \uparrow E(Z)$ when $Z_{\gamma} \uparrow Z$, or equivalently, $E$ is $\sigma$-weakly continuous.

The bridge between trace and norm is given by the following lemma. 

\begin{lemma}\label{L:slice}
$E(Z) = L_e^*ZL_e$ for all $Z \in \mathscr R (G)\otimes B(H_o)$
\end{lemma}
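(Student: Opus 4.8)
The plan is to verify the identity $E(Z) = L_e^*ZL_e$ by checking it on simple tensors and extending by linearity and continuity, exploiting that $L_e$ is the partial isometry $h \mapsto \chi_e \otimes h$ defined in (\ref{eq:def L}) with the index set $\mathbb J = G$ and $j = e$.

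First I would compute $L_e^*(X\otimes Y)L_e$ for $X\in \mathscr R(G)$ and $Y\in B(H_o)$. For any $h,k\in H_o$ one has
\[
\bigl( L_e^*(X\otimes Y)L_e\, h,\, k\bigr) = \bigl( (X\otimes Y)(\chi_e\otimes h),\, \chi_e\otimes k\bigr)
= (X\chi_e,\chi_e)\,(Yh,k) = \tau(X)\,(Yh,k),
\]
using that $L_e h = \chi_e\otimes h$, that $L_e^* = $ the adjoint of that inclusion, and the definition $\tau(X) = (X\chi_e,\chi_e)$. Hence $L_e^*(X\otimes Y)L_e = \tau(X)Y = E(X\otimes Y)$, so the two maps agree on all algebraic tensors $X\otimes Y$.

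Next I would pass to all of $\mathscr R(G)\otimes B(H_o)$. Both $Z\mapsto L_e^*ZL_e$ and $Z\mapsto E(Z)$ are bounded linear maps from $\mathscr R(G)\otimes B(H_o)$ into $B(H_o)$: boundedness of the first is clear since $\|L_e^*ZL_e\| \le \|Z\|$, and $E$ is bounded by construction (it was defined as the bounded linear extension of $X\otimes Y\mapsto \tau(X)Y$). Since they coincide on the algebraic tensor product, which is norm-dense in the spatial tensor product, and both maps are even $\sigma$-weakly (normal) — the first because $\omega_{h,k}(L_e^*ZL_e) = \omega_{L_eh,\,L_ek}(Z)$ is a normal functional for each $h,k$, and the normality of $E$ was noted in the text — they agree on the von Neumann algebra $\mathscr R(G)\otimes B(H_o)$. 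This gives $E(Z) = L_e^*ZL_e$ for all $Z$.

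I do not expect a genuine obstacle here; the statement is essentially a bookkeeping identity. The only point requiring slight care is the density/continuity argument used to extend from simple tensors to the full von Neumann tensor product: one should either invoke normality of both sides (cleanest) or note that $E$ restricted to the algebraic tensor product already determines it, and the algebraic tensor product is weak-$*$ dense. Either route is routine, so the proof should be short.
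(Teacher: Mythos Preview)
Your proof is correct and follows essentially the same approach as the paper: verify the identity on elementary tensors $X\otimes Y$ and conclude by linearity and continuity. The only difference is cosmetic---the paper computes $L_e^*(X\otimes I_o)L_eh$ directly by expanding $X\chi_e$ in the basis $\{\chi_g\}$, whereas you use the inner-product pairing; and you spell out the density/normality argument for the extension to the full von Neumann tensor product, which the paper leaves implicit.
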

\begin{proof}
It is enough to verify that the two maps agree on elementary tensors. Indeed, for all $h\in H_o$ and all 
$X\in \mathscr R (G),\, Y\in B(H_o)$ we have
\begin{align*}
L_e^*(X\otimes I_o)L_eh &=L_e^*X\chi_e\otimes h = L_e^*(\sum_{g\in G}(X\chi_e,\chi_g )\chi_g)\otimes h\\
&=L_e^*\sum_{g\in G}(\chi_g\otimes (X\chi_e,\chi_g )h) = (X\chi_e,\chi_e )h = \tau(X)h.
\end{align*}
Thus $L_e^*(X\otimes I_o)L_e = \tau(X)I_o$ and hence 
\[
L_e^*(X\otimes Y)L_e = L_e^*(I\otimes Y)(X\otimes I_oY)L_e = YL_e^*(X\otimes I_oY)L_e 
= \tau(X)Y= E(X\otimes Y).
\]
\end{proof}
Notice that the trace $\tau$ on $\mathscr R (G)$ is always finite, while the trace  $\tau \otimes tr$ on $\mathscr R (G)\otimes B(H_o)$ is  finite only if $\dim H_o < \infty$. Thus, given two partial isometries with the same initial projection, we want
to construct a strongly continuous path of partial isometries that connect them, where the strong convergence occurs in the 
first component of the tensor product. This will be achieved via the following key lemma. 

\begin{lemma}\label{L:homot}  Assume that $\dim H_o = \infty$ and that  $\mathscr R (G)$ has no minimal projections. Let  $R$ be a  projection in $\mathscr R (G)$ with $R\sim R^\bot$, let $\{Q_n\}_{1\le n \le N} $ with $N\le \infty$ be a collection of  infinite projections in $B(H_o)$, and let $I= \sum_{n=1}^N F_n $  be a decomposition of the identity  into  mutually orthogonal central projections $F_n \in \mathscr R (G)\cap \mathscr L (G)$.  Then there is a path of partial isometries  $\{ W(t) \mid t\in [0,1]\}$ in
 $\mathscr R (G) \otimes B(H_o)$ such that 
\item [(i)]  $L_e^*W(t)$ is  norm continuous
\item [(ii)] $W(t)^*W(t)=\sum_{n=1}^NF_n\otimes Q_n$ for all $t \in [0, 1]$,
\item [(iii)] $W(t)W(t)^*\le \sum_{n=1}^NF_n\otimes Q_n$ for all $t \in [0, 1]$
\item [(iv)] $W(0) =\sum_{n=1}^NF_n\otimes Q_n$, and
\item [(v)] $W(1)W(1)^*=\sum_{n=1}^N RF_n\otimes Q_n$. 
\end{lemma}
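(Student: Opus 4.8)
The plan is to build $W(t)$ by treating the central summands one at a time and then patching, using the diffuseness of $\mathscr R (G)$ and the infiniteness of the $Q_n$ to do the construction on each summand, and using the $\|\cdot\|_2$--continuity of the finite trace to control $L_e^*W(t)$. So first I would, for each $n$, produce a path $W_n(t)$ of partial isometries inside the central corner $(F_n\otimes I_o)\big(\mathscr R (G)\otimes B(H_o)\big)(F_n\otimes I_o)$ with $W_n(t)^*W_n(t)=F_n\otimes Q_n$, $W_n(t)W_n(t)^*\le F_n\otimes Q_n$, $W_n(0)=F_n\otimes Q_n$, $W_n(1)W_n(1)^*=RF_n\otimes Q_n$, and $L_e^*W_n(t)$ norm continuous. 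Since the corners are mutually orthogonal, $W(t):=\sum_n W_n(t)$ is then a partial isometry (the sum converging strongly) satisfying (ii)--(v). For (i), Lemma \ref{L:slice} gives $\big\|L_e^*(G\otimes I_o)\big\|^2=\big\|L_e^*(G\otimes I_o)L_e\big\|=\tau(G)$ for any projection $G\in\mathscr R (G)$; applying this with $G_N:=\sum_{n>N}F_n$ and observing that $\sum_{n>N}W_n(t)$ is a partial isometry supported on $G_N\otimes I_o$, I get $\big\|L_e^*\sum_{n>N}W_n(t)\big\|\le\tau(G_N)^{1/2}\to 0$ uniformly in $t$, because $\sum_n\tau(F_n)=\tau(I)=1$. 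Hence $L_e^*W(t)=\sum_n L_e^*W_n(t)$ is a uniform limit of norm continuous functions, so norm continuous. This reduces everything to one $n$: renaming, $\mathscr R (G)$ may be taken to be the diffuse finite von Neumann algebra $F_n\mathscr R (G)$, $Q:=Q_n$ is an infinite projection of $B(H_o)$, and $R\sim R^\perp$; I must join $I\otimes Q$ to a partial isometry with initial projection $I\otimes Q$, range contained in $I\otimes Q$, and range projection exactly $R\otimes Q$.

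\textbf{The single--summand path.} The projections $I\otimes Q$ and $R\otimes Q$ are Murray--von Neumann equivalent in $\mathscr R (G)\otimes B(H_o)$: splitting $Q=Q'\oplus Q''$ with $Q'\sim Q''\sim Q$ (possible since $Q$ is infinite) and using $R\sim R^\perp$, one fits $(R\otimes Q)\oplus(R^\perp\otimes Q)$ into $(R\otimes Q')\oplus(R\otimes Q'')=R\otimes Q$. However they are in general \emph{not} unitarily equivalent in that algebra (their complements differ, e.g.\ when $Q_n^\perp$ is finite dimensional), so there can be no norm continuous path of partial isometries with the prescribed fixed initial projection; one is forced to move only strongly. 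The heart of the construction is to arrange that all of this strong (non norm) motion takes place in the $\mathscr R (G)$--tensor factor, the $B(H_o)$--components of $W(t)$ being kept norm continuous (indeed, fixed). I would do this by choosing, using that $\mathscr R (G)$ has no minimal projection, a strongly continuous family of projections of $\mathscr R (G)$ (with traces interpolated through the non--atomicity, so that the family is strongly, equivalently $\|\cdot\|_2$, continuous) and a strongly continuous family of implementing partial isometries; the required absorption of the extra copy $R^\perp\otimes Q$ into $R\otimes Q$ is then carried out by sliding these $\mathscr R (G)$--pieces along while the \emph{fixed} partial isometries of $B(H_o)$ coming from a decomposition $Q=Q'\oplus Q''\oplus\cdots$ receive the image, with $W(0)=I\otimes Q$ (the identity partial isometry) at time $0$, range $R\otimes Q$ at time $1$, and every intermediate range contained in $I\otimes Q$. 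This yields (ii)--(v).

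\textbf{Norm continuity after slicing.} Each $W(t)$ so built is a (uniformly norm convergent) sum of elementary tensors $x_i(t)\otimes y_i$ with $x_i(t)\in\mathscr R (G)$ strongly continuous and uniformly bounded and $y_i\in B(H_o)$ fixed. Writing $\omega_{\chi_e}=\langle\,\cdot\,,\chi_e\rangle$, one has $L_e^*(x\otimes y)=(\omega_{\chi_e}x)\otimes y$, whence $\big\|L_e^*\big((x_i(t)-x_i(s))\otimes y_i\big)\big\|\le\|(x_i(t)-x_i(s))^*\chi_e\|\,\|y_i\|=\|x_i(t)-x_i(s)\|_2\,\|y_i\|$, and this tends to $0$ since strong convergence of bounded nets in the finite von Neumann algebra $\mathscr R (G)$ is $\|\cdot\|_2$--convergence. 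So $L_e^*W(t)$ is norm continuous, and together with the reduction this gives (i).

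\textbf{Expected main obstacle.} The whole difficulty is concentrated in the single--summand path: one must realise a change of range (from $I\otimes Q$ down to $R\otimes Q$, absorbing the extra copy $R^\perp\otimes Q$) that provably cannot be done norm continuously, while confining every genuinely strong motion to the first tensor factor so that the $\|\cdot\|_2$--to--norm mechanism above applies. This is possible precisely because $\mathscr R (G)$ has no minimal projections --- so that strongly continuous paths of (mutually inequivalent) projections are available there --- and because $Q$ is infinite --- so that the room $Q=Q'\oplus Q''\oplus\cdots$ needed to receive the rotated copy is available in $B(H_o)$ with \emph{fixed} partial isometries. Getting these two mechanisms to cooperate so that the endpoints land exactly on $I\otimes Q$ and $R\otimes Q$, with ranges staying below $I\otimes Q$ throughout, is the technical crux.
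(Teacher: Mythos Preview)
Your strategy is the paper's: build $W(t)$ from a strongly continuous path of projections in $\mathscr R(G)$ tensored against \emph{fixed} operators in $B(H_o)$, then use that strong continuity in the finite algebra $\mathscr R(G)$ is $\|\cdot\|_2$--continuity, which becomes norm continuity after applying $L_e^*$ via Lemma~\ref{L:slice}. Your reduction to a single central summand and the tail estimate $\big\|L_e^*\sum_{n>N}W_n(t)\big\|\le\tau\big(\sum_{n>N}F_n\big)^{1/2}$ are correct; the paper simply carries the sum over $n$ inside the formula from the outset, which is cosmetic.

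Where your outline stops short is exactly the part you flag as the crux: you do not write down $W(t)$. Your description invokes ``a strongly continuous family of implementing partial isometries'' in $\mathscr R(G)$ and a decomposition $Q=Q'\oplus Q''\oplus\cdots$; the first is a potential trap (paths of partial isometries with moving domains need not be strongly continuous), and the second suggests more pieces than are needed. The paper's construction is cleaner and avoids both issues: fix a unitary $U\in\mathscr R(G)$ with $URU^*=R^\perp$, fix exactly \emph{two} partial isometries $S_1,S_2\in B(H_o)$ with $S_i^*S_i=Q$ and $S_1S_1^*+S_2S_2^*=Q$ (Cuntz $\mathscr O_2$ generators on $QH_o$), choose an increasing strongly continuous path $R(t)\le R$ with $R(0)=0$, $R(1)=R$ from a diffuse masa of $R\mathscr R(G)R$, and set
\[
W(t)\;=\;(R(t)+UR(t)U^*)^{\perp}\otimes Q\;+\;R(t)\otimes S_1\;+\;R(t)U^*\otimes S_2.
\]
The three summands have pairwise orthogonal initial spaces summing to $I\otimes Q$ and pairwise orthogonal range spaces summing to $(I-UR(t)U^*)\otimes Q$, which gives (ii)--(v) by direct computation. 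For (i) one computes $E\big((W(t')-W(t))(W(t')-W(t))^*\big)\le 7\,\tau(R(t')-R(t))\,I_o$, which is precisely your $\|\cdot\|_2$-to-norm mechanism made explicit. So your plan is right, but the formula above \emph{is} the content of the lemma; without it, ``sliding the $\mathscr R(G)$-pieces while fixed $B(H_o)$ isometries receive the image'' describes the answer rather than constructs it.
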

\begin{proof} The reduced von Neumann algebra $R\mathscr R (G)R:= R\mathscr R (G)\left.R\right |_{R\ell^2(G)}$  has no minimal projections, thus it contains a strongly continuous increasing net of projections $\{R(t)\mid t\in [0,1]\}$ with $R(0)=0$, $R(1) = R$. For instance, $R(t)$ can be obtained from the spectral resolution of a positive generator of a maximal abelian subalgebra of  $R\mathscr R (G)R$.
Since $R^{\bot } \sim R$, there is a unitary $U\in \mathscr R (G)$ such that $R^{\bot }=URU^*$.
 Since $Q_n$ is an infinite projection in $B(H_o)$, there exist partial isometries $S_{1,n}, S_{2,n} \in B(H_o)$
such that 
\[
S_{1,n}^*S_{1,n}=S_{2,n}^*S_{2,n}=Q_n \quad \text{and} \quad S_{1,n}S_{1,n}^*+S_{2,n}S_{2,n}^*=Q_n.
\]
Notice that $S_{1,n}$, $S_{2,n}$ are the generators of the Cuntz algebra $\mathscr O_2$ represented on $Q_nH_o$. Define for $ t\in [0,1]$
\[
W(t):=\sum_{n=1}^N\Big( (R(t) +UR(t)U^*)^{\bot}F_n\otimes Q_n+ R(t)F_n \otimes S_{1,n}+R(t)U^*F_n\otimes S_{2,n}\Big).
\]
By definition, $W(0) = \sum_{n=1}^NF_n\otimes Q_n$. Since  $R(t)\,\bot\, (R(t) +UR(t)U^*)^{\bot}$ for all $t\in [0,1]$, 
and $S_{2,n}^*S_{1,n}=S_{1,n}^*S_{2,n}=0$ for all $n$, it follows that\\
\begin{align*}
&W(t)^*W(t) \\
&= \sum_{n=1}^N\Big((R(t) +UR(t)U^*)^{\bot}F_n\otimes Q_n +(R(t) +UR(t)U^*)^{\bot}R(t)F_n\otimes Q_nS_{1,n}\\
&+(R(t) +UR(t)U^*)^{\bot}R(t)U^*F_n\otimes Q_nS_{2,n}
+R(t)(R(t) +UR(t)U^*)^{\bot}F_n\otimes S_{1,n}^*Q_n\\
&+R(t)F_n\otimes S_{1,n}^*S_{1,n}
+R(t)U^*F_n\otimes S_{1,n}^*S_{2,n}\\
&+UR(t)(R(t) +UR(t)U^*)^{\bot}F_n\otimes S_{2,n}^*Q_n\\
&+UR(t)F_n\otimes S_{2,n}^*S_{1,n}
+UR(t)U^*F_n\otimes S_{2,n}^*S_{2,n}\Big)\\
&=\sum_{n=1}^N\Big((R(t) +UR(t)U^*)^{\bot}F_n\otimes Q_n
+R(t)F_n\otimes S_{1,n}^*S_{1,n}\\
&+UR(t)U^*F_n\otimes S_{2,n}^*S_{2,n}\Big)\\
&= \sum_{n=1}^N\Big((R(t) +UR(t)U^*)^{\bot}F_n\otimes Q_n
+R(t)F_n\otimes Q_n
+UR(t)U^*F_n\otimes Q_n\Big)\\
&=\sum_{n=1}^NF_n\otimes Q_n.
\end{align*}
Using the fact that 
$UR(t)U^*\,\bot\, (R(t) +UR(t)U^*)^{\bot}$ and $UR(t)U^*\,\bot\,R(t)$ for all $t$, 
a similar computation yields
\begin{align*}
&W(t)W(t)^* \\
&=\sum_{n=1}^N\Big((R(t) +UR(t)U^*)^{\bot}F_n\otimes Q_n
+R(t)F_n\otimes S_{1,n}S_{1,n}^*
+R(t)F_n\otimes S_{2,n}S_{2,n}^*\Big)\\
&=\sum_{n=1}^N\Big(((R(t) +UR(t)U^*)^{\bot} +R(t))F_n\otimes Q_n\Big) \\
&\le \sum_{n=1}^N F_n\otimes Q_n.
\end{align*}
In particular, 
\[
W(1)W(1)^* = \sum_{n=1}^N((R +R^\bot)^{\bot} +R)F_n\otimes Q_n = \sum_{n=1}^NRF_n\otimes Q_n. 
\]
Thus $\{W(t) \mid t\in [0,1]\}$ is a path of partial isometries of $\mathscr R (G) \otimes B(H_o)$ that satisfies
conditions (ii), (iii), (iv), and (v). We now show that the condition (i) is also satisfied. 
Let $0\le t<t'\le 1$ and $\Delta R:= R(t') - R(t)$. Then 
\[
W(t') -W(t) =\sum_{n=1}^N\Big(  -(\Delta R+U\Delta RU^*)F_n\otimes Q_n+\Delta R F_n\otimes S_{1,n}+\Delta RU^*F_n\otimes S_{2,n}\Big).
\]
By using the facts that  $\Delta R \, \bot\, U\Delta RU^*$ and $Q_nS_{i,n}=S_{i,n}Q_n=S_{i,n}$ for $i=1,2$ and all $n$, we obtain
\begin{align*}
&(W(t') -W(t))(W(t') -W(t))^*\\
&=\sum_{n=1}^N\Big ((\Delta R+U\Delta RU^*)^2F_n\otimes Q_n
 -(\Delta R+U\Delta RU^*)\Delta R F_n\otimes S_{1,n}^*\\
& -(\Delta R+U\Delta RU^*)U\Delta R F_n\otimes S_{2,n}^*
-\Delta R(\Delta R+U\Delta RU^*)  F_n\otimes S_{1,n}\\  
&+(\Delta R)^2  F_n\otimes S_{1,n} S_{1,n}^*
+\Delta R U\Delta RF_n\otimes S_{1,n}S_{2,n}^*\\
&-\Delta RU^*(\Delta R+U\Delta RU^*)F_n\otimes S_{2,n}
+\Delta RU^*\Delta RF_n\otimes S_{2,n}S_{1,n}^*\\
&+(\Delta R)^2F_n\otimes S_{2,n}S_{2,n}^*\Big)\\
&=\sum_{n=1}^N\Big((\Delta R+U\Delta RU^*)F_n\otimes Q_n
-\Delta R F_n\otimes S_{1,n}^*
-U\Delta R F_n\otimes S_{2,n}^*\\
&-\Delta R F_n\otimes S_{1,n}
+\Delta R F_n\otimes S_{1,n}S_{1,n}^*
-\Delta R U^*F_n\otimes S_{2,n}
+ \Delta R F_n\otimes S_{2,n}S_{2,n}^*\Big)\\
&= \sum_{n=1}^N\Big((2\Delta R+U\Delta RU^*)F_n\otimes Q_n
-\Delta R F_n\otimes (S_{1,n}^*+S_{1,n})\\
&- U\Delta R F_n\otimes S_{2,n}^*
-\Delta RU^* F_n\otimes S_{2,n} \Big)
\end{align*}
Thus,
\begin{align*}
E\bigr(&(W(t')-W(t))(W(t')-W(t))^*\bigl)=\\
&= \sum_{n=1}^N\Big(\tau (\Delta RF_n)(3Q_n-(S_{1,n}+S_{1,n}^*))
-\tau(\Delta RU^*F_n)S_{2,n}-\tau(U\Delta RF_n)S_{2,n}^*\Big)\\
&\le \sum_{n=1}^N 7\tau (\Delta RF_n)I_o = 7\tau (\Delta R)I_o.
\end{align*}
Hence
\[
\Vert L_e^* W(t')-L_e^*W(t)\Vert ^2=\Vert E\bigr((W(t')-W(t))(W(t')-W(t))^*\bigl) \Vert \le 7\tau (\Delta R).
\]
Since $\mathscr R (G)$ is finite, $\tau (R(t))$ is continuous and hence  $L_e^* W(t)$
 is norm continuous, which concludes the proof. 
\end{proof}
                                                                                         
Now we can proceed to prove Theorem \ref{T:homot}
\begin{proof} 
It is well known that in any von Neumann algebra (or, more in general, unital $C^*$-algebra), positive invertible operator are homotopic to the identity. 
But then, the operator frame generator  $A$ for $(G, \pi, H)$ is homotopic to the  Parseval operator frame generator $AS_A^{-1/2}$ by Lemma \ref {L:basic group} (ii). Thus, to prove the path connectedness in the norm topology of $\mathscr F_G$ it is enough to prove that the  collection of Parseval operator frame generators for $(G, \pi, H)$ is path-connected in the norm topology.  By (\ref{eq:Parseval}), this collection is parametrized by 
\[
\{L_e^*V\theta_A \mid  V\in \mathscr R (G)\otimes B(H_o), V^*V= P_A \},
\]
and since $\theta_A$ is an isometry, we only need  to prove that $V\underset{e}{\sim} W$ for any two partial isometries $V$ and $W$ in $\mathscr R (G) \otimes B(H_o)$ with the same initial projection $P_A$, i.e., $V^*V=W^*W=P_A$.  

(i) The algebra $\mathscr R (G)\otimes B(H_o)$ is finite because both $\mathscr R (G)$ and  $B(H_o)$ are finite, hence the equivalence of the projections $VV^*$ and $WW^*$ implies their unitary equivalence. But then $V\approx W$ by Lemma \ref {L: unitary} , and hence $V\underset{e}{\sim} W$. This proves that  $\mathscr F_G$ is norm connected.

(ii) We prove first that the condition is necessary.  Assume that $\mathscr R (G)$ has a nonzero minimal projection $Q$. Then $Q$ belongs to a finite type $I$ subfactor of $\mathscr R (G)$. Indeed if $c(Q)$ is the central cover of $Q$, which is the smallest projection in the center $\mathscr L (G) \cap \mathscr R (G)$ of $\mathscr R (G)$ that majorizes $Q$, then $c(Q)$ is minimal in $\mathscr L (G) \cap \mathscr R (G)$. But then the reduced von Neumann algebra $\mathscr R (G)c(Q) :=  c(Q)\mathscr R (G)\left.c(Q)\right |_{c(Q)\ell^2(G)}$ is a factor,  it is finite because so is   $\mathscr R (G)$, and it is of type $I$ because it contains the minimal projection $Q$. Let $\{E_{i,j}\}_{1\le i,j\le n}$ be a set of matrix units for $\mathscr R (G)c(Q)$, i.e., $E_{i,j}^*=E_{j,i}$,  $E_{i,k} E_{h,j} = \delta_{h,k} E_{i,j}$ for all $1\le i,j,h, k\le n$, $\sum_{i=1}^n  E_{i,i} = c(Q)$, and $\mathscr R (G)c(Q)= \{\sum_{i,j=1}^n c_{i,j}E_{i,j} \mid c_{i,j}\in \mathbb C\}$. Then every element $Z$ in the factor $\mathscr R (G)c(Q)\otimes B(H_o)$ has the unique matricial form 
\be {e: matrix form}
 Z= \sum_{i,j=1}^n E_{i,j}\otimes Z_{i,j} \quad  \text{for } Z_{i,j}\in B(H_o).
\ee
Therefore, it is easy to see from Lemma \ref{L:slice} that 
\be {e: type I slice}
L_e^*ZL_e= \frac{1}{n}\sum _{i=1}^nZ_{i,i}.
\ee

Since $\mathscr R (G)c(Q)\otimes B(H_o)$ is an infinite type I factor, there is a proper isometry $V\in \mathscr R (G)c(Q)\otimes B(H_o)$, i.e., $V^*V=c(Q)\otimes I_o$ but $VV^*\ne c(Q)\otimes I_o$. To prove that $\mathscr F_G$ is not  path connected in the norm topology, it will be enough to show that the two Parseval \o* generators $L_e^*c(Q)\otimes I_o$ and $L_e^*V$ cannot be  connected by any norm continuous path of arbitrary \o* generators.

Assume otherwise, then by Theorem \ref {T:param gen} (ii), there is a path of operators $M(t)\in \mathscr R (G)\otimes B(H_o)$ with $M(t)c(Q)\otimes I_o=M(t)$ , $M(t)^*M(t)c(Q\otimes I_o$ is invertible in $\mathscr R (G)c(Q)\otimes B(H_o)$,  $M(0)=c(Q)$, $M(1)= V$, and $L_e^*M(t)$ is norm continuous. By (\ref {e: matrix form}), $M(t) = \sum_{i,j=1}^n E_{i,j}\otimes M_{i,j}(t)$ for a (unique) collection $M_{i,j}(t)\in B(H_o)$. Then for all $s, t\in [0,1]$, by (\ref {e: type I slice}),
\begin{align*}
(L_e^*M(s) -& L_e^*M(t))(L_e^*M(s) - L_e^*M(t))^*\\
&= L_e^*\Big( \sum_{i,j=1}^n E_{i,j}\otimes\sum_{k=1}^n (M(s)_{i,k} - M(t)_{i,k} )((M(s)_{k,j} - M(t)_{k,j} )^*\Big)L_e\\
&= \frac{1}{n}\sum _{i,k=1}^n (M(s)_{i,k} - M(t)_{i,k} )((M(s)_{k,i} - M(t)_{k,i} )^*.
\end{align*}
Thus the norm continuity of $L_e^*M(t)$ is equivalent to the norm continuity of  $M_{i,j}(t)\in B(H_o)$ for all $1\le i,j \le n$, and the latter is equivalent to the norm continuity of $M(t)$. But then, $M(t)^*M(t)$ is norm continuous and by the norm continuity of the inverse (e.g., see \cite [Problem 100] {pH82}), $(M(t)^*M(t))^{-1}$ is also norm continuous in  $\mathscr R (G)c(Q)\otimes B(H_o)$. As a consequence, $P(t): M(t)(M(t)^*M(t))^{-1}M(t)*$ is a norm continuous path, and it is immediate to see (cfr. Theorem \ref{T:param} ) that $P(t)$ are projections. But   this is impossible since $P(0)= c(Q)$ and $P(1)= VV^*$ are not unitarily equivalent and hence not homotopic.

We prove now that if $\mathscr R (G)$ has no nonzero minimal projections then $V \underset{e}{\sim} W$. By the standard type decomposition of von Neumann algebras (for these and other von Neumann algebra properties see \cite {KR}), there is a (unique) central projection $F^{(1)}\in \mathscr L (G)\cap \mathscr R (G)$ for which $(\mathscr L (G)\cap \mathscr R (G))F^{(1)}$ is diffuse, i.e., has no atoms and hence it is isomorphic to $L^\infty(\mathbb R)$) and  $(\mathscr L (G)\cap \mathscr R (G))(F^{(1)})^{\bot}$ is atomic and hence $\mathscr R (G))(F^{(1)})^{\bot}$ is a direct sum of type $II_1$ factors.  Since it is immediate to verify that $V \underset{e}{\sim} W$ if and only if both 
\[
VF^{(1)}\otimes I_o \underset{e}{\sim} WF^{(1)}\otimes I_o \quad \text {and} \quad V(F^{(1)})^{\bot}\otimes I_o \underset{e}{\sim} W(F^{(1)})^{\bot}\otimes I_o,
\]
we can consider separately the cases where the center of $\mathscr R (G)$ is diffuse and where it is atomic.

Consider first the case  where  $\mathscr R (G)$  has diffuse center. Then there is a strongly continuous increasing net of central projections $F(t)\in \mathscr L (G) \cap \mathscr R (G)$ such that $F(0)= 0$ and $F(1)=I$. Let
\[
V(t) := V(F(t)^\bot \otimes I_o )+ W(F(t) \otimes I_o).
\]
Since $F(t)\otimes I_o $ is in the center of $\mathscr R (G)\otimes B(H_o)$, we see that 
\[
V(t)^*V(t)= V^*V(F(t)^\bot\otimes I_o ) + W^*W(F(t) \otimes I_o ) = V^*V = P_A
\]
for all $t\in [0, 1]$ and $V(0) = V, V(1)= W$. Moreover, for all $s< t \in [0, 1]$, 
\[
V(t) - V(s)= V(F(s)-F(t))\otimes I_o + W(F(t)-F(s))\otimes I_o = (W-V)(F(t)-F(s))\otimes I_o,
\]
hence 
\begin{align*}
(V(t) - V(s))&(V(t) - V(s))^*\\
&= (F(t)-F(s))\otimes I_o\big( (W-V)(W-V)^* \big)(F(t)-F(s))\otimes I_o\\
& \le ||W-V||^2  (F(t)-F(s))\otimes I_o\\
& \le 4 (F(t)-F(s))\otimes I_o.
\end{align*}
But then, 
\begin{align*}
||L_e^*V(t) - L_e^*V(s)||^2 &= ||E((V(t) - V(s))(V(t) - V(s))^*)|| \\
&\le 4 ||E((F(t)-F(s))\otimes I_o)|| \\
&= 4 \tau ((F(t)-F(s)).
\end{align*}
By the strong (actually $\sigma$-weak)  continuity of $\tau$,  $L_e^*V(t)$ is norm continuous, and hence  $V\underset{e}{\sim} W$. Following the terminology introduced  in \cite{AILP} for wavelet generators for the unitary system, we say that the path constructed in the case where  $\mathscr R (G)$  has diffuse center is \emph{a direct path}.

 Consider now the key case where $\mathscr R (G)$ has no nonzero minimal projections and the center of $\mathscr R (G)$ is atomic. Then the identity $I\in\mathscr R (G)$ can be decomposed (uniquely) into a sum $I= \sum_{n=1}^N F_n$   of $N\le \infty$ mutually orthogonal projections  $F_n$ minimal in the center $\mathscr R (G)\cap\mathscr L (G)$. Notice that since $\mathscr R (G)$ has a finite faithful trace $\tau$, the decomposition is at most countable. The minimality of the projections $F_n$ implies that each reduced von Neumann algebra $\mathscr R (G)F_n: \mathscr R (G)F_n\mathscr R (G)\left.F_n\right.|_{F_n\ell^2(G)}$ is a factor, and being finite and with no minimal projections, it is of type $II_1$. 
Let 
\begin{align*}
&F^{(2)}:= \sum \{ F_n \mid V^*VF_n \otimes I_o \not\sim F_n\otimes I_o\}\\
&F^{(3)}:= \sum \{ F_n \mid (VV^*)^{\bot}F_n\otimes I_o \sim F_n\otimes I_o, F_n \le (F^{(2)})^{\bot}\}\\
&F^{(4)} := \sum \{ F_n \mid (VV^*)^{\bot}F_n\otimes I_o \not\sim F_n\otimes I_o, F_n \le (F^{(2)})^{\bot}\}.
\end{align*}

Thus $F^{(2)}+F^{(3)}+F^{(4)} = I$.
Reasoning as above, we can consider separately the cases where $F^{(2)}=I$  and $F^{(2)}=0$

Assume first that $F^{(2)}=I$, i.e., 
\[
W^*WF_n \otimes I_o = V^*VF_n \otimes I_o \not\sim F_n\otimes I_o \quad \text{ for all } n.
\]
Then 
\be{e: not equiv}
WW^*F_n\otimes I_o\not  \sim F_n\otimes I_o \quad \text{and}\quad VV^*F_n\otimes I_o \not\sim F_n\otimes I_o \quad \text{ for all } n.
\ee
Since   the factor  $\mathscr R (G)\otimes B(H_o)$ is infinite, (\ref {e: not equiv}) implies  that  $(VV^*)^{\bot}F_n \otimes I_o \sim F_n\otimes I_o$ for every $n$, and hence $(VV^*)^{\bot}\sim I\otimes I_o$ and similarly, $(WW^*)^{\bot}\sim I\otimes I_o$. But then $VV^*$ and $WW^*$ are unitarily equivalent, hence $V\approx W$  by Lemma \ref{L: unitary},  and thus $V\underset{e}{\sim} W$.

When $F^{(2)}=0$, i.e., $F^{(3)}+F^{(4)} =I$,  we have $W^*W= V^*V\sim I\otimes I_o$. Since $\mathscr R (G)$ is a direct sum of type $II_1$ factors and in every $II_1$ factor there are projections equivalent to their orthogonal complement, we can fix a projection $R\in \mathscr R (G)$ with $R \sim R^\bot$. As in each of the infinite factors we have $RF_n\otimes I_o \sim R^{\bot}F_n\otimes I_o \sim F_n\otimes I_o$, it follows that  $R\otimes I_o \sim R^\bot\otimes I_o \sim I\otimes I_o$. Fix a partial  isometry $V_o \in \mathscr R (G) \otimes B(H_o)$ with $V_o^*V_o=V^*V = W^*W$ and $V_o^*V_o=R\otimes I_o$.  We claim that $V\underset{e}{\sim} V_o$. The same argument  will prove that $W\underset{e}{\sim} V_o$ and hence that $V\underset{e}{\sim} W$, which will conclude the proof.

Assume next that $F^{(3)}=I$, then 
\[
VV^*\sim V^*V\sim I\otimes I_o\sim R\otimes I_o\quad \text{and}\quad (VV^*)^{\bot}\sim I\otimes I_o\sim R^{\bot}\otimes I_o= (R\otimes I_o)^{\bot}. 
\]
Thus  $VV^*$ and $V_o^*V_o$ are unitarily equivalent, hence $V \approx V_o$, and hence $V\underset{e}{\sim} V_o$.

Finally consider the case where $F^{(4)}=I$, namely where $V^*V\sim I\otimes I_o$ but $(VV^*)^{\bot}F_n\otimes I_o \not\sim F_n\otimes I_o$ for every $n$, which is the crux of the proof.
If for a certain $n$ the projection $(VV^*)^{\bot}F_n\otimes I_o$ is finite and hence $(\tau\otimes tr)((VV^*)^{\bot}F_n\otimes I_o)< \infty$, let 
$Q_n^\bot \in B(H_o)$  be a finite projection with  
\[
tr(Q_n^\bot) > \frac{(\tau\otimes tr)((VV^*)^{\bot}F_n\otimes I_o)}{\tau (F_n)}. 
\]
Then $Q_n\sim I_o$. Since $\mathscr R (G)F_n$ is a type $II_1$ factor, it contains a projection $R_n$   with trace
\[
\tau (R_n) = \frac{(\tau\otimes tr)((VV^*)^{\bot}F_n\otimes I_o)}{\tau(F_n)tr(Q_n^\bot )},
\]
Equivalently, 
\[                                                                                                                               
(\tau\otimes tr) (R_n\otimes Q_n^\bot ) = (\tau\otimes tr)((VV^*)^{\bot}F_n\otimes I_o)
\]
and hence, again because  $\mathscr R (G)F_n$ is a factor,  $R_n\otimes Q_n^\bot \sim (VV^*)^{\bot}F_n\otimes I_o$.
If for a certain $n$ the projection  $(VV^*)^{\bot}F_n\otimes I_o$ is infinite (but still $(VV^*)^{\bot}F_n\otimes I_o \not\sim F_n\otimes I_o$ by the definition of  $F^{(4)}$), there is a projection $Q_n^\bot \in B(H_o)$ with $Q_n\sim I_o$ and for which $F_n\otimes Q_n^\bot \sim (VV^*)^{\bot}F_n\otimes I_o$.  In this case, set $R_n:=F_n$, so for all $n$, 
\[
(VV^*)^{\bot}F_n\otimes I_o \sim R_n\otimes Q_n^{\bot}
\]
with $R_n= R_nF_n$. Moreover, 
\[
(R_n \otimes Q_n^\bot)^\bot F_n\otimes I_o= F_n\otimes Q_n + R_n^\bot F_n \otimes Q_n^\bot
\sim F_n \otimes I_o \sim VV^*F_n
\]
because  $Q_n\sim I_o$ and $Q_n^\bot \not \sim I_o$.
Thus $VV^*F_n\otimes I_o$ and $F_n\otimes Q_n + R_n^\bot F_n \otimes Q_n^\bot$ are unitarily equivalent for all $n$ and hence $VV^*$ and $\sum_{n=1}^NF_n\otimes Q_n+\sum_{n=1}^N R_n^\bot F_n\otimes Q_n^\bot$ are unitarily equivalent in $\mathscr R (G)\otimes B(H_o)$. Choose a unitary 
$U\in \mathscr R (G)\otimes B(H_o)$ such that $UVV^*U^* = \sum_{n=1}^NF_n\otimes Q_n+\sum_{n=1}^N R_n^\bot F_n\otimes Q_n^\bot$. Then $V\approx UV$.

Now apply Lemma \ref{L:homot} to the fixed projection $R\sim R^\bot$, and the sequences  of central projections $F_n$ and infinite projection $Q_n$ that we have constructed for $1\le n \le N$.  Thus we obtain  a path of partial isometries
 $\{ W(t) \mid t\in [0,1]\}$ in $\mathscr R (G) \otimes B(H_o)$ where $L_e^*W(t)$ is  norm continuous, 
 $W(t)^*W(t)=\sum_{n=1}^NF_n\otimes Q_n$ and  $W(t)W(t)^*\le \sum_{n=1}^NF_n\otimes Q_n$ for all $t \in [0, 1]$, $W(0) =\sum_{n=1}^NF_n\otimes Q_n$, and $W(1)W(1)^*=\sum_{n=1}^N RF_n\otimes Q_n$.
Then let 
\[
V(t) := \Big(W(t) + \sum_{n=1}^N R_n^\bot F_n\otimes Q_n^\bot\Big) UV \quad \text{for}\quad t\in [0,1].
\]
Since the initial projections and the range projections of all the partial isometries $W(t)$ are orthogonal 
to  $\sum_{n=1}^N R_n^\bot F_n\otimes Q_n^\bot$,  $W(t) + \sum_{n=1}^N R_n^\bot F_n\otimes Q_n^\bot$ are partial isometries for all $ t\in [0,1]$. Therefore 
$V(t)$ are also partial isometries in $\mathscr R (G) \otimes B(H_o)$, and all have initial projection $P_A = (UV)^*UV$. 
Since $L_e^*W(t)$ is norm continuous, so is $L_e^*V(t)$. 
Thus by definition, $UV = V(0)  \underset{e}{\sim} V(1)$. Moreover, the range projection of $V(1)$ is unitarily equivalent  to the range projection $R\otimes I_o$ of $V_o$. Indeed, 
\begin{align*}
V(1)V(1)^* &= (W(1) +  \sum_{n=1}^N R_n^\bot F_n\otimes Q_n^\bot UVV^*U^*(W(1)^* +  \sum_{n=1}^N R_n^\bot F_n\otimes Q_n^\bot)\\
&= W(1)W(1)^* +  \sum_{n=1}^N R_n^\bot F_n\otimes Q_n^\bot \\
&=  \sum_{n=1}^N RF_n\otimes Q_n+ \sum_{n=1}^N R_n^{\bot} F_n\otimes Q_n^\bot\\
& \sim R\otimes I_o
\end{align*}
and   
\begin{align*}
(V(1)V(1)^*)^\bot =   \sum_{n=1}^N R^\bot F_n\otimes Q_n+  \sum_{n=1}^N R_n^\bot F_n \otimes Q_n^\bot \sim R^\bot\otimes I_o =(R\otimes I_o)^\bot.
\end{align*}

But then, $V(1) \approx V_o$. Since we have already established that $V\approx UV \underset{e}{\sim} V(1)$, we conclude in
 this case too that $V \underset{e}{\sim} V_o$, thus completing the proof.
 \ep

\end{document}